\DeclareMathOperator{\diam}{diam }
\DeclareMathOperator{\chem}{chem }
\DeclareMathOperator{\Mod}{Mod }
\DeclareMathOperator{\dist}{dist }
\DeclareMathOperator{\Jac}{Jac}
\DeclareMathOperator{\carr}{carr}
\DeclareMathOperator{\cont}{c}
\DeclareMathOperator{\disc}{d}
\numberwithin{equation}{section}
\newtheorem{theorem}{Theorem}[section]
\newtheorem{lemma}[theorem]{Lemma}
\newtheorem{corollary}[theorem]{Corollary}
\theoremstyle{remark}
\newtheorem*{remark}{Remark}
\newtheorem*{modifications}{Modifications}
\theoremstyle{definition}
\begin{document}

\title{Homogenization of random quasiconformal mappings and random Delauney triangulations}
\author{Oleg Ivrii and Vladimir Markovi\'c}
\date{May 20, 2019}
\maketitle

\abstract{
In this paper, we solve two problems dealing with the homogenization of random media. We show that a random quasiconformal mapping is close to an affine mapping, while a circle packing of a random Delauney triangulation is close to a conformal map, confirming a conjecture of K.~Stephenson. We also show that on a Riemann surface equipped with a conformal metric, a random Delauney triangulation is  close to being  circle packed.
}

\section{Introduction}

\subsection{Random quasiconformal mappings}

Our model of a random quasiconformal mapping depends on a probability measure $\lambda$ on the unit disk $\mathbb{D} = \{z \in \mathbb{C}: |z| < 1\}$.
 For each cell in a square grid in the complex plane, randomly assign a complex number  in the unit disk according to the measure $\lambda$. The collection of these numbers defines a Beltrami coefficient on $\mathbb{C}$ which is constant on the cells of the grid.
 We write $ w^{\mu}$ for the unique homeomorphism of the plane which solves the Beltrami equation
 $$
 \overline{\partial} w(z) = \mu(z) \partial w(z)
 $$
and fixes $0, 1, \infty$. If $\lambda$ is supported on a compact subset of the unit disk, the existence of $ w^{\mu}$ is guaranteed by the measurable Riemann mapping theorem. In general,  $ w^{\mu}(z)$ exists and is unique by virtue of  $K_\mu = \frac{1 + |\mu|}{1-|\mu|}$ being locally bounded, although  it may not be surjective. We refer to $w^{\mu}$ as a random quasiconformal mapping, even though it may not be a genuine quasiconformal mapping.
Our first main theorem says that if the mesh size $\delta > 0$ is small, then with high probability, $ w^{\mu}$ is close to an affine transformation $A_{\lambda} = w^{\mu_\lambda}$ determined by the measure $\lambda$.
(Since an affine map has constant dilatation, $\mu_\lambda$ is a constant with absolute value less than 1.)

\begin{theorem}
\label{main-thm}
Suppose $\lambda$ is a probability measure on the unit disk. For each cell  of a square grid in the plane of mesh size $\delta$, randomly assign a complex number in the unit disk according to the measure $\lambda$.
There exists an affine transformation $A_\lambda$ such that for any compact set $K \subset \mathbb{C}$ and $\varepsilon > 0$, when $\delta \le \delta_0(K, \varepsilon)$ is sufficiently small,
$\|  w^{\mu} -  A_{\lambda}\|_{C(K)} < \varepsilon$ holds with probability
at least $1-\varepsilon$.
\end{theorem}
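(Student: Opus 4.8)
The plan is to transfer the small-mesh limit into a statement about the large-scale geometry of a \emph{single} fixed random map, and then to run a stochastic-homogenization argument for the Beltrami equation.

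\textbf{Rescaling.} Let $\tilde\mu$ be the piecewise-constant Beltrami coefficient produced by the same i.i.d.\ recipe on the \emph{unit} grid, and let $W$ be any homeomorphism of $\widehat{\mathbb C}$ with $\overline\partial W=\tilde\mu\,\partial W$, $W(0)=0$, $W(\infty)=\infty$. Changing variables $z\mapsto\delta^{-1}z$ in the Beltrami equation shows that, for each fixed $\delta$, the random map $w^{\mu}$ of mesh $\delta$ has the same law as $z\mapsto W(z/\delta)/W(1/\delta)$ (the ratio does not depend on the choice of $W$ above, since any two such maps differ by $\zeta\mapsto c\zeta$). So it suffices to prove: there is a deterministic invertible $\mathbb R$-linear map $\Lambda$ with
$$
W(\zeta)=\Lambda\zeta+o(|\zeta|)\qquad\text{as }\zeta\to\infty,\ \text{almost surely.}
$$
Indeed, coupling all meshes to one realization of $W$, one then gets $W(z/\delta)/W(1/\delta)\to \Lambda(z)/\Lambda(1)=:A_\lambda(z)$ locally uniformly and almost surely; $A_\lambda$ is affine and fixes $0,1,\infty$; and a.s.\ convergence along this coupling is convergence in probability for the original family, from which the $(\varepsilon,\delta_0)$ statement follows at once.

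\textbf{The corrector.} The field $\tilde\mu$ is stationary and ergodic under the $\mathbb Z^{2}$-action by lattice shifts. Following the classical scheme of stochastic homogenization, one constructs a \emph{corrector}: a constant $\mu_\lambda\in\mathbb D$ and a random function $\chi$ with $\chi(0,\cdot)=0$ whose distributional derivatives $\partial\chi,\overline\partial\chi$ are stationary, mean-zero and locally $L^{2}$, such that, with $\Lambda\zeta:=\zeta+\mu_\lambda\overline\zeta$, the map $W_0:=\Lambda\zeta+\chi$ satisfies $\overline\partial W_0=\tilde\mu\,\partial W_0$; equivalently $\overline\partial\chi-\tilde\mu\,\partial\chi=\tilde\mu-\mu_\lambda$, with compatibility relation $\mu_\lambda=\mathbb E\!\left[\tilde\mu\,(1+\partial\chi)\right]$. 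This is solved by the $L^{2}$-theory on the probability space (Lax--Milgram / Helmholtz decomposition of stationary gradient fields, using that $f\mapsto\tilde\mu\cdot(\text{Beurling transform of }f)$ is a strict contraction; alternatively, pass to the divergence-form elliptic equation of ellipticity $K$ satisfied by $\re W$ and quote its stochastic homogenization). Once $\chi$ is known to be sublinear, $W_0$ is a quasiconformal homeomorphism of $\widehat{\mathbb C}$ fixing $0$ and $\infty$, so we may take $W=W_0$ above; $\Lambda$ is invertible because an injective ($K$-quasiconformal) map cannot be asymptotic at $\infty$ to a degenerate linear map.

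\textbf{Sublinearity, and the main obstacle.} The crux is to prove $\chi(\zeta)=o(|\zeta|)$ a.s., uniformly on annuli $1\le|\zeta|\le R$, which is exactly what the previous step uses. Along any lattice direction $\xi$, write $\chi(n\xi)=\sum_{k=0}^{n-1}\bigl(\chi((k{+}1)\xi)-\chi(k\xi)\bigr)$: the increments form a stationary, ergodic, mean-zero sequence (they are integrals of the stationary field $\nabla\chi$), so the multiparameter ergodic theorem gives $\chi(n\xi)/n\to0$ a.s.; a maximal inequality for $\nabla\chi$ together with the uniform H\"older continuity of $W$ upgrades this to $\sup_{|\zeta|\le R}|\chi(\zeta)|/|\zeta|\to0$. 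The real work lies in this homogenization input, and in the non-uniformly-elliptic case: when $\lambda$ is not supported on a compact subset of $\mathbb D$ the maps $w^{\mu}$ are not uniformly quasiconformal, and the corrector construction must be carried out with $\lambda$ truncated to $\{|z|\le 1-\eta\}$ and $\eta\to0$, controlling the sparse cells where $|\tilde\mu|\approx1$ through the local boundedness of $K_{\tilde\mu}$; this is where most of the technical effort goes.
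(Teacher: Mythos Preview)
Your approach is genuinely different from the paper's. The paper proceeds geometrically: it first shows, via a percolation lemma on the chemical distance (Lemma~\ref{mathieu-lemma2}), that $w^\mu$ is with high probability $(K,\varepsilon)$-roughly quasiconformal, i.e.\ distorts moduli of all macroscopic rectangles by a bounded factor; it then identifies an extremal orientation $\theta^*$ and a maximal effective stretch $K^*$, and uses a subdivision argument (Lemma~\ref{deficiency-lemma}) together with the law of large numbers to promote ``$\Mod w^\mu(S_{\theta^*})>K^*-\varepsilon$'' from positive probability to high probability; finally an approximate-conformality lemma (Lemma~\ref{blue-conformality2}) removes the restriction to a subsequence of scales. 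You instead attack the Beltrami equation directly via a corrector $\chi$ with stationary gradient, proving sublinearity by the ergodic theorem; the paper explicitly contrasts its method with the singular-integral homogenization of Astala--Rohde--Saksman--Tao, and your route is closer in spirit to that line. For compactly supported $\lambda$ your sketch is a workable outline (though thin on why $W_0=\Lambda+\chi$ is a global homeomorphism; one way is Stoilow factorization plus the asymptotic linearity you have just established). What your approach buys is a cleaner identification of $\mu_\lambda$ through the compatibility relation; what the paper's buys is that it never touches the analytic machinery of the Beltrami equation beyond the length--area inequality.

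The gap is the unbounded-distortion case. Your upgrade from $\chi(n\xi)/n\to 0$ along lattice directions to $\sup_{|\zeta|\le R}|\chi(\zeta)|=o(R)$ invokes ``the uniform H\"older continuity of $W$'', but $W$ has no uniform H\"older exponent when $\operatorname{ess\,sup}|\tilde\mu|=1$; and the proposed remedy ``truncate $\lambda$ to $\{|z|\le1-\eta\}$ and let $\eta\to0$'' does not close the argument, since every corrector estimate degenerates as $K_\eta\to\infty$ and you provide no mechanism for uniformity in $\eta$. The paper's replacement for uniform ellipticity is exactly the percolation input: one fixes $k_1<1$ with $\lambda(\{|z|>k_1\})<r_0$, colours cells with $|\mu|>k_1$ yellow, and shows that every macroscopic curve spends at least half its length in blue cells. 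This produces rough quasiconformality with a constant depending only on $k_1$, independent of any truncation level, and is what drives the compactness of the random maps. Something of this flavour would be needed to make your truncation work, and it is not a minor detail---it is precisely the ``technical effort'' you defer in your last sentence.
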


We do not know how to explicitly determine the affine map $A_\lambda$ for general $\lambda$. However, if $\lambda$ respects the $90^\circ$ symmetry of the model, then $A_{\lambda}$ is the identity mapping:

\begin{corollary}
If  $d\lambda(z) = d\lambda(-z)$, then $A_{\lambda}(z) = z$ is the identity map.
\end{corollary}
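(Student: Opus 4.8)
The plan is to exploit the $90^\circ$ rotational symmetry of the square grid together with the hypothesis $d\lambda(z)=d\lambda(-z)$ to pin down $A_\lambda$ by a functional equation. After translating the grid, which does not affect $A_\lambda$, we may assume it is invariant under $R(z)=iz$; then $R$ maps the grid bijectively onto itself, merely permuting its cells.

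First I would record how a Beltrami coefficient transforms under $R$. A direct computation with $\partial$ and $\overline\partial$ shows that if $h(z)=w^\mu(iz)$ then $\mu_h(z)=-\mu(iz)=:\widetilde\mu(z)$. Postcomposing with the conformal map $w\mapsto w/w^\mu(i)$ leaves the Beltrami coefficient unchanged, and the resulting map
$$
z\ \longmapsto\ \frac{w^\mu(iz)}{w^\mu(i)}
$$
fixes $0,1,\infty$ (note $w^\mu(i)$ is finite and nonzero, since $w^\mu$ is an injection fixing $0$ and $\infty$); hence it is precisely $w^{\widetilde\mu}$, the normalized random quasiconformal map with coefficient $\widetilde\mu$. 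Because $R$ only relabels the cells and the cell values are i.i.d.\ with law $\lambda$, the coefficient $\widetilde\mu=-\mu\circ R$ is a random Beltrami coefficient built from the reflected law $-\lambda$; the hypothesis $d\lambda(z)=d\lambda(-z)$ says $-\lambda=\lambda$, so $\widetilde\mu$ and $\mu$ are identically distributed. (This is why it is the $90^\circ$ symmetry, composed with the sign flip $\mu\mapsto-\mu$, that requires $\lambda$ to be invariant under $z\mapsto -z$.)

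Next I would apply Theorem~\ref{main-thm} to both $\mu$ and $\widetilde\mu$. Fix a large compact set $K$ and $\varepsilon\in(0,1/2)$, and set $K'=K\cup iK\cup\{i\}$. For $\delta$ small the events $\{\|w^\mu-A_\lambda\|_{C(K')}<\varepsilon\}$ and $\{\|w^{\widetilde\mu}-A_\lambda\|_{C(K)}<\varepsilon\}$ each have probability at least $1-\varepsilon$ (the same threshold $\delta_0$ works because $\widetilde\mu\overset{d}{=}\mu$), so their intersection is nonempty. On this intersection $w^{\widetilde\mu}(z)=w^\mu(iz)/w^\mu(i)$ lies within $\varepsilon$ of $A_\lambda(z)$ on $K$ and, using $A_\lambda(i)\notin\{0,\infty\}$ together with the closeness of $w^\mu$ to $A_\lambda$ on $K'$, it also lies within $O(\varepsilon)$ of $A_\lambda(iz)/A_\lambda(i)$ on $K$. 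Letting $\varepsilon\to 0$ yields the identity
$$
A_\lambda(z)=\frac{A_\lambda(iz)}{A_\lambda(i)}\qquad\text{for all }z\in\mathbb{C}.
$$

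Finally I would solve this. Writing the affine map as $A_\lambda(z)=\alpha z+\beta\bar z$ with $\alpha+\beta=1$ (from $A_\lambda(0)=0$ and $A_\lambda(1)=1$) and $|\beta|<|\alpha|$, substituting into the functional equation and comparing the coefficients of $z$ and of $\bar z$ leads to $\alpha-\beta=1$; combined with $\alpha+\beta=1$ this gives $\alpha=1$, $\beta=0$, i.e.\ $A_\lambda=\mathrm{id}$. I expect the only real friction to be in the middle step: verifying the transformation rule for $\mu$ under $R$, and making rigorous the principle that two random maps which coincide sample by sample must have the same limit in probability, so that the functional equation genuinely holds for $A_\lambda$ itself — random normalizing factor $w^\mu(i)$ and all.
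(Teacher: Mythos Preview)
Your proof is correct and follows essentially the same route as the paper: compute that $w^{\mu}(iz)/w^{\mu}(i)$ is the normalized solution for $-\mu(iz)$, use the grid symmetry together with $d\lambda(z)=d\lambda(-z)$ to see this has the same law as $w^{\mu}$, deduce the functional equation $A_\lambda(iz)/A_\lambda(i)=A_\lambda(z)$, and solve it. You have simply filled in details the paper leaves implicit (the explicit Beltrami transformation rule, the passage from equality in law to the identity for $A_\lambda$, and the algebra solving the functional equation).
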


\begin{proof}
Observe that $ w^{\mu}(iz)/ w^{\mu}(i)$ is the normalized quasiconformal mapping with dilatation $- \mu(iz)$. Since the square grid is invariant under multiplication by $i$, the random quasiconformal maps
 $ w^{\mu}(iz)/ w^{\mu}(i)$ and $ w^\mu(z)$ are equally likely. Therefore, the affine transformation $A_{\lambda}(z)$ satisfies the relation $A_\lambda(iz)/A_\lambda(i) = A_\lambda(z)$ which forces $A_\lambda(z) = z$.
\end{proof}

In an unpublished manuscript  \cite{ARST}, K.~Astala, S.~Rohde, E.~Saksman and T.~Tao gave a different proof of Theorem \ref{main-thm} for random quasiconformal mappings with uniformly bounded distortion. Their method is based on the  homogenization of iterated singular integrals.
By contrast, our proof is  more elementary: it is based on the geometric definition of quasiconformal mappings.

 We first show that with high probability,
a random quasiconformal map is roughly quasiconformal, that is, it stretches the moduli of all rectangles whose sides have length $\ge \varepsilon$ by a bounded amount. This uses a simple lemma about percolation on the square grid that we have learned
from a paper of P.~Mathieu \cite{mathieu} on random walk in random environments.

We then show that there exists an extremal direction such that with positive probability, the
 random quasiconformal map stretches moduli of squares in that direction by approximately the maximal amount. We  promote {\em positive probability} to {\em high probability} by subdividing a square that is stretched by approximately the maximal amount into a large number of small squares. On one hand, the moduli of the images of the small squares are independent random variables since they are disjoint, while one the other hand, the extremality of the big square forces all small squares to be extremal.

The above argument shows that there is a sequence of good scales $\delta_k \to 0$, such that with high probability, the random quasiconformal mapping $w^\mu$ constructed using the square grid of mesh size $\delta_k$ is close to an affine mapping  on any fixed compact subset of the plane. To show that all sufficiently small scales are good, we use the following principle: if an orientation-preserving homeomorphism is  conformal off a random set of small measure, then it is close to a conformal map.

\subsection{Random Delauney triangulations}

A {\em circle packing} $\mathcal P = \{C_i\}$ is a collection of circles in the plane with disjoint interiors. The {\em tangency pattern} of $\mathcal P$  is an embedded graph in the plane whose vertices are centers of circles in $\mathcal P$ and edges are line segments which connect centers of tangent circles.
The Koebe-Andreev-Thurston Circle Packing Theorem \cite{koebe, thurston} says that  any  finite triangulation $\mathcal T$ of a topological disk admits a {\em maximal circle packing} $\mathcal P = \bigcup C_i \subset \mathbb{D}$ whose boundary circles are horocycles. Furthermore, this maximal packing is unique up to M\"obius transformations.

For a discrete set of points $V$ in the plane, the {\em Voronoi tessellation} is a decomposition of the complex plane $\mathbb{C} = \bigcup F_x$ where
$F_x$ consists of all points $z \in \mathbb{C}$ for which $\min_{y \in V} |y - z| = |x-z|$. Each $F_x$ is a polygon, although it could be unbounded.
If the points in $V$ are in general position, that is, if no three points lie on a line and no four points lie on a circle, then one can define the {\em Delauney triangulation} as the dual graph to the Voronoi tessellation. Namely, its vertex set is $V$, and there is an edge between $x$ and $y$ if the intersection $F_x \cap F_y$ is a line
segment. Since the union of all triangles in a Delauney triangulation is the convex hull of $V$, it is a  topological disk.

Let $\Omega$ be a simply-connected domain whose boundary is a $C^1$ curve. Randomly choose $N \ge 1$ points in $\Omega$ with respect to Lebesgue measure and take the union of the Delauney triangles contained in $\Omega$. Based on numerical experiments, Kenneth Stephenson suggested that when $N \ge 1$ is large, then  with high probability, the maximal circle packing of a random Delauney triangulation approximates a conformal map $\varphi: \Omega \to  \mathbb{D}$. In this paper, we prove this conjecture.
To be precise, fix two points $z_1, z_2 \in \Omega$. For each $i=1,2$, let $v_i \in \mathcal{T}$ be the closest point to $z_i$ (in case of a tie, choose $v_i$ arbitrarily).
Let $\mathcal P$ be the maximal circle packing of $\mathcal T$ normalized so that the $C_{v_1}$ is centered at the origin while the center of $C_{v_2}$ lies on $(0,1)$.
The {\em circle packing map} $\varphi_{\mathcal P}: \carr \mathcal T \to \carr \mathcal P$ is the piecewise linear map that takes points of $\mathcal T$ to centers of circles
and is linear on triangles. Here, the {\em carrier} of a triangulation is simply the union of all the triangles in the triangulation.

\begin{theorem}
\label{main-thm4}
Let $\Omega$ be a bounded simply-connected domain in the plane with $C^1$ boundary and $\varphi: \Omega \to \mathbb{D}$ be the conformal map with
 $\varphi(z_1) = 0$ and $0 < \varphi(z_2) < 1$. Consider the random Delauney triangulation with $N$ points.
For any compact set $K \subset \Omega$ and $\varepsilon > 0$, when $N \ge N_0(K, \varepsilon)$ is sufficiently large,  $\|\varphi_{\mathcal P} - \varphi\|_{C(K)} < \varepsilon$ holds with probability
at least $1-\varepsilon$.
\end{theorem}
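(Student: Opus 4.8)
The plan is to run an argument parallel to the proof of Theorem~\ref{main-thm}, with the ring lemma of Rodin and Sullivan playing the role of the elementary distortion estimates for the square grid, and with the \emph{full} rotational invariance of the random point cloud (rather than merely the $90^\circ$ symmetry used in the Corollary) forcing the homogenized map to be conformal, not just affine. The intermediate statement advertised in the abstract --- that a random Delauney triangulation of the plane is close to being circle packed --- is the homogenization input, and Theorem~\ref{main-thm4} is its localization to $\Omega$ together with an identification of the limit. The first and principal step is to show that, after rescaling, circle-packing a random Delauney triangulation produces a map which is \emph{roughly quasiconformal}: with high probability $\varphi_{\mathcal P}$ distorts the moduli of all rectangles in $\Omega$ with sides of length $\ge\varepsilon$ by a bounded amount.

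To obtain this, observe that $\varphi_{\mathcal P}$ is linear on each Delauney triangle $T$ with vertices $v_i,v_j,v_k$, sending it to the triangle of centers of the mutually tangent circles $C_{v_i},C_{v_j},C_{v_k}$, whose side lengths are $r_i+r_j$, $r_j+r_k$, $r_k+r_i$; hence the dilatation of $\varphi_{\mathcal P}$ on $T$ is controlled by the shape of $T$ and by the ratios $r_i/r_j$ of the radii of nearby circles in the maximal packing. For a uniform (locally Poissonian) point cloud, a definite proportion of Delauney triangles are \emph{fat}, with all angles bounded below by a universal constant, and the \emph{thin} triangles --- together with those incident to vertices of atypically large or small degree --- form clusters obeying the subcritical percolation estimate on the square grid that we borrowed from P.~Mathieu \cite{mathieu} in the proof of Theorem~\ref{main-thm}. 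On the fat part the ring lemma bounds $r_i/r_j$ for neighbouring circles, so $\varphi_{\mathcal P}$ is uniformly quasiconformal there, and combining this with the percolation estimate yields rough quasiconformality exactly as in the square-grid case, and in particular precompactness of $\{\varphi_{\mathcal P}\}$ in $C(K)$ with non-degenerate subsequential limits.

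As in the proof of Theorem~\ref{main-thm}, one then produces a homogenized model: over the Poisson point process on $\mathbb C$ there is an extremal stretching direction, a positive probability of near-extremal stretching of a large square is promoted to high probability by subdividing it into many disjoint sub-squares whose image moduli are independent while extremality of the big square forces extremality of all of them, and the ``conformal off a random set of small measure'' principle upgrades a sequence of good scales to all sufficiently large $N$. The resulting homogenized linear map $A$ depends only on the law of the Poisson process, which is invariant under \emph{every} rotation $z\mapsto e^{i\theta}z$; repeating the Corollary's argument with $i$ replaced by an arbitrary unit complex number gives $A(e^{i\theta}z)/A(e^{i\theta})=A(z)$ for all $\theta$, so $A$ is a similarity. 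Thus, with high probability $\varphi_{\mathcal P}$ is close to \emph{some} conformal map on each compact $K\subset\Omega$.

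It remains to identify the limit with the specific normalized map $\varphi$. Any subsequential limit $\psi$ of $\varphi_{\mathcal P}$ is conformal on $\Omega$, and since $v_i\to z_i$ and $\mathcal P$ is normalized so that $C_{v_1}$ is centered at $0$ and $C_{v_2}$ on $(0,1)$, we get $\psi(z_1)=0$ and $\psi(z_2)\in[0,1)$; as $\carr\mathcal P\subset\mathbb D$ and its boundary circles are horocycles, $\psi(\Omega)\subset\mathbb D$. The real content is to show no area is lost near $\partial\Omega$, i.e.\ $\psi(\Omega)=\mathbb D$, and this is where the hypothesis that $\partial\Omega$ is $C^1$ is used: it guarantees that near the boundary the Delauney triangles are uniformly fat and that $\carr\mathcal T$ exhausts $\Omega$ up to a set of small area, so a length--area estimate rules out $\psi(\Omega)$ being a proper subdomain of $\mathbb D$ without contradicting rough quasiconformality near $\partial\Omega$. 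Then $\psi=\varphi$ by uniqueness in the Riemann mapping theorem, every subsequential limit equals $\varphi$, and Theorem~\ref{main-thm4} follows. I expect the main obstacle to be the first step --- establishing rough quasiconformality, since it forces one to control simultaneously the shapes of random Delauney triangles and the ratios of radii in the \emph{global} maximal packing --- together with the boundary analysis needed to pin the target down as all of $\mathbb D$.
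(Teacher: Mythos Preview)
Your overall architecture matches the paper's, but there is a genuine gap in the homogenization step. You write that after subdividing an extremal square into many disjoint sub-squares, ``their image moduli are independent.'' This is true in the random-quasiconformal setting because $\Mod w^\mu(S)$ depends only on $\mu|_S$, but it is \emph{false a priori} for circle packings: the radii in the maximal packing $\mathcal P$, and hence $\Mod \varphi_{\mathcal P}(S)$, depend on the \emph{entire} triangulation $\mathcal T$, not just on $\mathcal T \cap S$. The paper flags this explicitly as a ``second difficulty'' and resolves it by first showing (via rough quasiconformality) that all interior circles have small radii, and then invoking a deep theorem of He and Schramm \cite{he-schramm}: when the radii are small, $\Mod \varphi_{\mathcal P}(S)$ is determined up to small error by the combinatorics of $\mathcal T$ in a slight enlargement $\tilde S$ of $S$ (Corollary~\ref{largescale-structure}). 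Only after this locality is established do disjoint, well-separated squares have approximately independent image moduli, and only then can the extremal-direction and law-of-large-numbers argument from Section~\ref{sec:homogenization-qc} be imported. Without He--Schramm your independence claim has no justification.

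Your route to rough quasiconformality --- controlling the pointwise dilatation of $\varphi_{\mathcal P}$ on individual triangles via fatness of the source triangle plus the ring lemma on the packing side, with bad triangles handled by percolation --- is also different from the paper's, though not obviously wrong. The paper never estimates the dilatation of $\varphi_{\mathcal P}$ triangle by triangle; instead it bounds the \emph{discrete} modulus of $\Gamma_\leftrightarrow(R^{\disc}_+)$ using a metric supported only on vertices of bounded valence (those lying in ``deep blue'' cells containing a controlled number of Poisson points), and then converts this to a bound on the continuous modulus of $\varphi_{\mathcal P}(R)$ via Lemma~\ref{moduli-CP}. This sidesteps any control on the shapes of source Delauney triangles: only valence enters, which is exactly what the ring lemma sees. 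Similarly, the boundary analysis in the paper is a discrete length--area estimate on an annular sector using the metric $\rho_{\disc}(v)=|v-\zeta|^{-1}$, not a claim that Delauney triangles near $\partial\Omega$ are uniformly fat (they need not be; $C^1$ is used only to get a good partition of $\Omega$ into cells and to ensure $\partial B(\zeta,\rho)\cap\Omega$ is a single arc).
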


 The above theorem remains true if the random set of points is generated using a  Poisson point process of high intensity.

 The proof of Theorem \ref{main-thm4} is similar to that of Theorem \ref{main-thm} in that most of the work goes into showing that with high probability, the circle packing map $\varphi_{\mathcal P}$ is roughly quasiconformal.
 One first shows that with high probability,
the discrete modulus of every rectangle $R \subset \Omega$ whose sides have length $\ge \varepsilon$ is bounded from above and below. The discrete modulus is simple to analyze since it only depends on the combinatorics of the Delauney triangulation in $R$. In general, the discrete and continuous moduli of $\varphi_{\mathcal P}(R)$ are unrelated, however, if the triangulation in question has bounded valence, the two notions of modulus agree up to a multiplicative constant. While a random Delauney triangulation may have vertices of arbitrarily large valence, they are quite rare and can be ``avoided'' using a percolation argument.

At this point, we are presented with a second difficulty.
In the setting of random quasiconformal mappings, the modulus of $w^{\mu}(R)$ only depends on the Beltrami coefficient on $R$, however, in a circle packing, the modulus of $\varphi_{\mathcal P}(R)$ also depends on the behaviour of the triangulation outside of $R$. However, if all circles in the packing have small radii, then by a fundamental result of He and Schramm \cite{he-schramm}, $\Mod \varphi_{\mathcal P}(R)$ is  determined by the combinatorics of the triangulation in $R$ up to small error which tends to 0 as the radii of the circles in the packing shrink.

\begin{remark}
Let $\Sigma$ be a Riemann surface of genus $g \ge 2$. Consider the random Delauney triangulation on $\Sigma$ with respect to the hyperbolic metric. According to \cite[Proposition 9.1]{stephenson-book}, the maximal circle packing will live on a Riemann surface $\Sigma_{\mathcal P}$ homeomorphic to $\Sigma$, however, the complex structure may be different. Using the methods of this paper, one can show that when the number of points $N$ is very large, then with high probability, the Riemann surface
$\Sigma_{\mathcal P}$ is close to $\Sigma$ in the Teichm\"uller space $\mathcal T_g$ of Riemann surfaces of genus $g$, and furthermore, the mapping $\varphi_{\mathcal P}$ is uniformly close to the Teichm\"uller map from $\Sigma \to \Sigma_{\mathcal P}$.
\end{remark}

\subsection{Random walk in random environments}
\label{sec:rwre}

For comparison, we mention some results about random walk in random environments.
Let $\lambda$ be a probability measure on $(0, \infty)$. For each edge in the square grid $\mathbb{Z}^2$, randomly choose its conductance according to $\lambda$. Let $S_n$ be the random walk in $\mathbb{Z}^2$ which starts at the origin, and at each step, the walker moves from a vertex $x$ to an adjacent vertex $y \sim x$ with probability $$\frac{c(x,y)}{\sum_{z \sim x} c(x,z)}.$$  In 2004, Sidoravicius and Sznitman \cite{sidoravicius-sznitman} showed that if the conductances are uniformly bounded away from zero and infinity, then $S_n/\sqrt{n}$ converges to Brownian motion, as in the unweighted case.
Several years later, the case of {\em arbitrary} non-zero conductances was resolved independently by P.~Mathieu \cite{mathieu} and M.~Biskup and T. Prescott \cite{BP}. For a survey on the random conductance model, see \cite{biskup-survey}.

The model of random quasiconformal maps can be interpreted as a continuous analogue of simple random walk in random media where one simulates Brownian motion in a random environment: in each cell of the square grid, Brownian motion is to be stretched in some direction depending on the dilatation. Essentially, this process simulates the image of Brownian motion under the quasiconformal map. This has been studied by Osada \cite{osada} under the name of {\em homogenization of diffusing processes}\/, although he only discussed the case of bounded distortion.

\section{Moduli of curve families}

We will work with two notions of moduli of curves. In the continuous setting, a {\em metric} $\rho(z)$ is a non-negative measurable function defined on a domain $\Omega \subset \mathbb{C}$.
 The  area of $\rho$ is defined as
$$
A(\rho) = \int_{\Omega} \rho^2 |dz|^2.
$$
One can use $\rho$ to measure lengths of rectifiable curves:
$$
\ell_{\rho}(\gamma) = \int_\gamma \rho |dz|.
$$
A metric is said to be {\em admissible} for a family of rectifiable curves $\Gamma$ (contained in $\Omega$) if the $\rho$-length of every curve $\gamma \in \Gamma$ is at least 1. The {\em modulus} of the curve family $\Gamma$ is defined as
$$
\Mod \Gamma := \inf_\rho A(\rho),
$$
where the infimum is taken over all admissible metrics $\rho$.
By a  {\em conformal rectangle} $R$, we mean a Jordan domain in the plane with four marked boundary points. In this paper, all conformal rectangles will be marked, i.e.~equipped with a  distinguished pair of opposite sides.
 Let $\Gamma_{\leftrightarrow}$ be the family of curves connecting the distinguished pair of opposite sides of $R$ and $\Gamma_{\updownarrow}$ denote the conjugate family which connects the other pair of opposite sides. It is well known that
\begin{equation}
\label{eq:product-of-moduli}
\Mod \Gamma_{\leftrightarrow} \cdot \Mod \Gamma_{\updownarrow} = 1.
\end{equation}

Given a (geometric) rectangle $R$, we denote the length of its marked sides by $\ell_1(R)$ and the length of the unmarked sides by $\ell_2(R)$. Then, $\Mod R = \Mod \Gamma_{\leftrightarrow} = \ell_1(R)/\ell_2(R)$. We denote the side length of a square $S$ by $\ell(S)$. All squares have modulus 1.

For two compact sets $E, F \subset \mathbb{C}$, the {\em Hausdorff distance} $d(E,F)$ is defined as the minimal number $t \ge 0$ such that any point of $E$ is within $t$ of some point of $F$ and vice versa. To define the Hausdorff distance between two conformal rectangles, one also needs to make sure that the marked sides line up.

It is easy to see that modulus of a conformal rectangle varies continuously in the Hausdorff topology. The following lemma says that the modulus of the image of a conformal rectangle under a quasiconformal map  does not change much under small perturbations:
\begin{lemma}
\label{continuity-of-moduli}
Suppose $f: \mathbb{C} \to \mathbb{C}$ is a $K$-quasiconformal mapping and $S, S'$ are two squares in the plane. For any $\varepsilon > 0$, there exists a $\delta = \delta(\varepsilon, K) > 0$, so that if the relative Hausdorff distance $d(S, S')/\ell(S) < \delta$, then $|\Mod f(S') - \Mod f(S)| < \varepsilon$.
\end{lemma}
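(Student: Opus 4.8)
The plan is to prove the lemma by contradiction, combining the compactness of normalized families of $K$-quasiconformal maps with the already-noted continuity of the modulus of a conformal rectangle in the Hausdorff topology. That continuity is not a priori uniform in $f$ or in the position and scale of $S$, and compactness is precisely what upgrades it to uniformity.

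First I would normalize so that the square is fixed. Let $Q = [0,1]^2$, and let $A$ be an orientation-preserving similarity with $A(Q)=S$. Replacing $f$ by $g := f\circ A$ gives a $K$-quasiconformal homeomorphism of $\hat{\mathbb C}$ fixing $\infty$ with $g^{-1}(S)=Q$, and post-composing by the affine map $M\colon w\mapsto (w-g(0))/(g(1)-g(0))$ produces $h := M\circ g \in \mathcal F_K$, where $\mathcal F_K$ is the family of $K$-quasiconformal homeomorphisms of $\hat{\mathbb C}$ fixing $0,1,\infty$. Since $A$, $M$, $M^{-1}$ are conformal, $\Mod f(S)=\Mod h(Q)$ and $\Mod f(S')=\Mod h(T')$, where $T':=A^{-1}(S')$ is a square with $d(T',Q)=d(S',S)/\ell(S)$. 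So it suffices to show: for every $\varepsilon>0$ there is $\delta(\varepsilon,K)>0$ such that every $h\in\mathcal F_K$ and every square $T'$ with $d(T',Q)<\delta$ satisfy $|\Mod h(T')-\Mod h(Q)|<\varepsilon$.

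If this fails, choose $\varepsilon_0>0$, maps $h_n\in\mathcal F_K$, and squares $T'_n$ with $d(T'_n,Q)\to 0$ but $|\Mod h_n(T'_n)-\Mod h_n(Q)|\ge\varepsilon_0$. By the classical normal-family theorem for quasiconformal maps, $\mathcal F_K$ is compact in the topology of locally uniform convergence and its members are uniformly equicontinuous on compact subsets of $\mathbb C$; pass to a subsequence with $h_n\to h\in\mathcal F_K$ uniformly on a fixed compact neighborhood $N$ of $Q$. For $n$ large, $T'_n\subset N$, so $h_n(T'_n)\to h(Q)$ and $h_n(Q)\to h(Q)$ as marked conformal rectangles in the Hausdorff topology: the boundary curves converge because $h_n\to h$ uniformly on $N$ and $h$ is uniformly continuous on $N$ while $\partial T'_n\to\partial Q$, and the four distinguished points $h_n(\text{corners of }T'_n)$ converge to the corresponding corners of $h(Q)$, so the marked arcs match in the limit. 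Since $h(Q)$ is a genuine conformal rectangle of finite positive modulus and $\Mod$ is continuous in the Hausdorff topology, $\Mod h_n(T'_n)\to\Mod h(Q)$ and $\Mod h_n(Q)\to\Mod h(Q)$, contradicting $|\Mod h_n(T'_n)-\Mod h_n(Q)|\ge\varepsilon_0$. Undoing the normalization yields the desired $\delta=\delta(\varepsilon,K)$.

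The main obstacle is exactly this uniformity: producing a $\delta$ that depends only on $\varepsilon$ and $K$, not on $f$ nor on where and how large $S$ is. Two features of the set-up are what make it work. First, $S$ must be carried to the fixed square $Q$ by a \emph{similarity}, so that the relative Hausdorff distance $d(S,S')/\ell(S)$ is exactly preserved. Second, the remaining normalization should be by affine maps, so that $\infty$ stays fixed and the images $h_n(N)$ remain uniformly bounded subsets of $\mathbb C$; this legitimizes treating those images as compact sets in the plane and keeps the marked-side bookkeeping honest. Modulo these points, the argument is routine.
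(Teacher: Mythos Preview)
Your argument is correct: normalize by a similarity so the square becomes $Q=[0,1]^2$, post-compose to land in the compact family $\mathcal F_K$ of $K$-quasiconformal maps fixing $0,1,\infty$, and then run a contradiction via normal families and Hausdorff continuity of modulus. One small slip: you write ``$g^{-1}(S)=Q$'' where you mean $g(Q)=f(S)$, but this does not affect the argument.

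As for comparison with the paper: there is nothing to compare. The paper states this lemma immediately after remarking that the modulus of a conformal rectangle varies continuously in the Hausdorff topology, and gives no proof; it is treated as a routine consequence of that continuity plus compactness of normalized $K$-quasiconformal maps. Your write-up is exactly the standard way to fill in this gap, and the care you take over the two normalization steps (similarity first to preserve the \emph{relative} Hausdorff distance, then affine to keep $\infty$ fixed and stay in $\mathcal F_K$) is precisely what makes $\delta$ depend only on $(\varepsilon,K)$.
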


In the discrete setting, a {\em metric} $\rho(v)$ is a function on the vertices of a planar graph $G$. The area of $\rho$ is defined as
$$
A(\rho) = \sum_{v \in G} \rho(v)^2.
$$
A path $\gamma = \langle x_0, x_1, x_2, \dots, x_n \rangle$ is a collection of vertices such that $x_i \sim x_{i+1}$ are connected by an edge. One can use $\rho$ to measure lengths of paths:
$$
\ell_\rho(\gamma) = \sum_{v \in \gamma} \rho(v).
$$
The notions of admissibility and discrete modulus are defined as in the continuous case. By a {\em combinatorial rectangle} $R$, we mean a topological rectangle enclosed by a finite collection of edges of $G$,
 and four vertices of $G$ have been marked on $\partial R$.

\section{Roughly quasiconformal maps}
\label{sec:roughly-qc}

According to the geometric definition of quasiconformality, an orientation-preserving  homeomorphism $f: U \to \mathbb{C}$  is quasiconformal if it distorts moduli of rectangles in $U$ by a bounded amount. It is well known that one can test quasiconformality by looking at round annuli of modulus 2 or at rectangles of modulus 10. For convenience of the reader, we recall the proofs.

\begin{lemma}
\label{testing-annuli}
Suppose an orientation-preserving homeomorphism $f: U \to \mathbb{C}$ distorts moduli of all annuli $A = A(z, r, 2r) \subset U$ by a bounded amount:
\begin{equation}
\label{eq:annuli-qc}
(1/K) \cdot \Mod A \, \le \,  \Mod f(A) \, \le \, K \cdot \Mod A.
\end{equation}
Then, $f$ is $L(K)$ quasiconformal.
\end{lemma}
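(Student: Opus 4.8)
The plan is to deduce from \eqref{eq:annuli-qc} that the linear dilatation of $f$ is bounded above by a constant depending only on $K$, and then to invoke the classical equivalence between the metric and geometric definitions of quasiconformality. For $z\in U$ and $r>0$ with $\overline{D(z,2r)}\subset U$, set
$$L(z,r)=\max_{|w-z|=r}|f(w)-f(z)|,\qquad l(z,r)=\min_{|w-z|=r}|f(w)-f(z)|,$$
and let $H_f(z)=\limsup_{r\to0}L(z,r)/l(z,r)$; the target is a bound $H_f(z)\le H(K)$ at every point. First I would record the elementary facts that, since $f$ carries nested round disks to nested Jordan domains and the farthest point of a compact Jordan domain from an interior point lies on the boundary, one has $D(f(z),l(z,r))\subseteq f(D(z,r))\subseteq\overline{D(f(z),L(z,r))}$, the functions $r\mapsto l(z,r)$ and $r\mapsto L(z,r)$ are nondecreasing, and $\diam f(\overline{D(z,r)})\le 2L(z,r)$.

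The easy half of the dilatation bound comes from a single annulus. Normalizing $f(z)=0$ and applying the hypothesis to $A=A(z,r/2,r)$, of modulus $\tfrac1{2\pi}\log2$, its image is a ring separating the bounded continuum $E=f(\overline{D(z,r/2)})$, which contains $0$ and a point at distance $L(z,r/2)$, from the unbounded continuum $F=\overline{\mathbb C}\setminus f(D(z,r))$, which contains $\infty$ and whose distance to $0$ is $l(z,r)$. Teichm\"uller's module theorem then gives $\Mod f(A)\le\tau\bigl(l(z,r)/L(z,r/2)\bigr)$, where $\tau$ denotes the (increasing) modulus of the Teichm\"uller extremal ring; combined with the lower bound $\Mod f(A)\ge\tfrac1{2\pi K}\log2$ this yields
$$L(z,r/2)\ \le\ c_K\,l(z,r),\qquad c_K:=1\big/\tau^{-1}\!\bigl(\tfrac1{2\pi K}\log2\bigr).$$

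What remains is a complementary estimate controlling how fast $L(z,\cdot)$ can grow across one dyadic scale, namely $L(z,r)\le C(K)\,L(z,r/2)$; together with the inequality above this gives $L(z,r)\le C(K)c_K\,l(z,r)$, hence $H_f(z)\le H(K)$. This growth estimate is where the \emph{upper} bound in \eqref{eq:annuli-qc} enters, and it is the delicate point: a single annulus centred at $z$ does not detect a narrow ``finger'' on which $f$ is highly expanding, so one must run a connectedness/chaining argument over round annuli centred at many points --- roughly, if $f$ magnified some subregion of $A(z,r/2,r)$ enormously it would distort a small round annulus centred there, contradicting the hypothesis. I would organize this exactly as in the textbook proof that a homeomorphism of bounded linear dilatation is quasiconformal; alternatively one may simply quote that theorem (Lehto--Virtanen, V\"ais\"al\"a), which is itself the implication we want.

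Finally, once $H_f\le H(K)$ everywhere, $f$ is $L(K)$-quasiconformal. Indeed the estimates above already present $f$ as weakly quasisymmetric with constants depending only on $K$ --- one has $l(z,r)\le L(z,r)\le H(K)\,l(z,r)$ together with $l(z,2r)\asymp_K L(z,r)$ --- hence quasisymmetric, and a quasisymmetric homeomorphism distorts the modulus of every conformal rectangle by a controlled amount, which is the geometric definition used in this paper; alternatively one invokes the classical metric$\,\Rightarrow\,$geometric implication directly. The main obstacle is the ``finger'' estimate of the preceding paragraph: getting the module estimates to bound $L(z,r)/l(z,r)$ from \emph{above} genuinely requires both inequalities in \eqref{eq:annuli-qc} and the multi-centre chaining, since the Teichm\"uller step by itself controls only one circle relative to the next smaller one.
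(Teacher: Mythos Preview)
Your overall strategy---bound the linear dilatation (equivalently, establish a weak quasisymmetry estimate) and then invoke the classical equivalence with quasiconformality---is exactly what the paper does. Your first step, $L(z,r/2)\le c_K\, l(z,r)$, is the paper's inequality $\inf_{|w-x|=2r}|f(w)-f(x)|\ge c\sup_{|y-x|=r}|f(y)-f(x)|$; the paper derives it by the elementary observation that $f(A(x,r,2r))$ separates $\{f(x),f(y)\}$ from $f(z)$ and has modulus bounded below, rather than through Teichm\"uller's theorem, but the content is the same.

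The gap is in your second step. You correctly identify that $L(z,r)\le C(K)\,L(z,r/2)$ is what remains, and you correctly sense that an annulus centred at $z$ alone cannot see a ``finger''---but you then stop, gesturing at a chaining argument or at quoting Lehto--Virtanen. The paper fills this in two lines: given $|z-x|=2r$, take $y$ the midpoint and apply the \emph{same} separation argument to the annulus $A(z,r,2r)$ centred at $z$ (so $y$ lies on its inner circle and $x$ on its outer) to get $|f(x)-f(y)|\ge c\,|f(y)-f(z)|$; then the triangle inequality gives $|f(z)-f(x)|\le (1+c^{-1})|f(y)-f(x)|\le (1+c^{-1})L(x,r)$. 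So the ``multi-centre chaining'' you anticipate collapses to a single extra centre and a midpoint trick. Note also that this uses only the \emph{lower} bound in \eqref{eq:annuli-qc}; your claim that the upper bound is where the difficulty enters is mistaken---the paper's proof never uses it.
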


\begin{proof}
We will show that $f$ satisfies the following quasisymmetry condition: there exists a constant $C(K) > 0$ such that
\begin{equation}
\label{eq:quasisymmetry}
\frac{\sup_{|z - x| = 2r} |f(z) - f(x)|}{\inf_{|z - x| = 2r} |f(z) - f(x)|} \le C, \qquad \text{whenever }B(x, 4r) \subset U.
\end{equation}
Once we show (\ref{eq:quasisymmetry}), the lemma follows from the equivalence of quasiconformality and quasisymmetry, e.g.~see \cite[Theorerm 3.4.1]{AIM}.

Suppose $|y - x| = r$ and $|z - x| = 2r$. Consider the annulus $A = A(x, r, 2r)$. Since $f(A)$ separates $f(x), f(y)$ from $f(z)$ and $\Mod f(A)$ is bounded from below, we have
\begin{align}
\label{eq:quasisymmetry3}
 |f(z) - f(y)| \ge c \, |f(y) - f(x)|, \\
|f(z) - f(x)| \ge c \, |f(y) - f(x)|,
\end{align}
for some constant $c > 0$ which depends only on $K$. It follows that
\begin{equation}
\label{eq:quasisymmetry1}
\inf_{|z - x| = 2r} |f(z) - f(x)| \ge c \, \sup_{|y - x| = r} |f(y) - f(x)|.
\end{equation}
For the reverse inequality, note that if $y$ is the midpoint of $x$ and $z$ then we have the third inequality
$$
|f(x) - f(y)| \ge c \, |f(y) - f(z)|,
$$
in which case,
\begin{align}
\sup_{|z - x| = 2r} |f(z) - f(x)| & \le  \sup_{|z - x| = 2r} \bigl \{ |f(z) - f(y)| + |f(y) - f(x)| \bigr \} \\
\label{eq:quasisymmetry2}
&  \le (1+c^{-1}) \, \sup_{|y - x| = r} |f(y) - f(x)|.
\end{align}
Putting (\ref{eq:quasisymmetry1}) and (\ref{eq:quasisymmetry2}) together completes the proof.
\end{proof}

\begin{lemma}
\label{testing-rectangles}
Suppose an orientation-preserving homeomorphism $f: U \to \mathbb{C}$ distorts moduli of all rectangles with aspect ratio $10$ contained in $U$ by a bounded amount:
\begin{equation}
\label{eq:rectangle-qc}
(1/K) \cdot \Mod R \, \le \,  \Mod f(R) \, \le \, K \cdot \Mod R.
\end{equation}
Then, $f$ is $L(K)$ quasiconformal.
\end{lemma}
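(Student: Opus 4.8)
The plan is to reduce the statement to Lemma~\ref{testing-annuli}. Inspecting the proof of that lemma, one sees that it invokes only the \emph{lower} bound $\Mod f(A)\ge(\mathrm{const})\cdot\Mod A$ on the moduli of round annuli $A=A(z,r,2r)$; so it is enough to produce a constant $M_0=M_0(K)>0$ with $\Mod f(A)\ge M_0$ for every $A=A(z,r,2r)$ whose closure lies comfortably inside $U$. Equivalently, by the annulus analogue of \eqref{eq:product-of-moduli}, writing $\Gamma$ for the family of curves in $f(A)$ joining the two boundary circles, it is enough to bound $\Mod\Gamma$ from above. Geometrically, the task is to show that $f$ cannot collapse a round annulus; this, and not the easier statement that $f$ cannot over-expand one, is where the work lies.

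Before attacking this I would record two consequences of the hypothesis. First, by tiling a geometric rectangle by a grid of rectangles of aspect ratio $10$ and applying the series rule for moduli (if a curve family $\Gamma'$ overflows families $\Gamma'_1,\dots,\Gamma'_n$ lying in disjoint domains, then $1/\Mod\Gamma'\ge\sum_i 1/\Mod\Gamma'_i$) together with the parallel rule, one deduces from \eqref{eq:rectangle-qc} that $f$ distorts the moduli of all geometric squares, and of all geometric rectangles of aspect ratio in $[1,20]$, by a bounded amount. Second, by a routine further subdivision, for such a rectangle $P$ and any two disjoint closed boundary arcs $\alpha,\beta\subset\partial P$ (each a union of whole sides or sub-segments of sides), the modulus of the family of curves in $P$ joining $\alpha$ to $\beta$ is changed by $f$ by at most a factor $L_1=L_1(K)$.

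The main step is then the following. Rescale so that $A=A(0,1,2)$, and inscribe in $A$ a square annulus $\mathcal A=[-a',a']^2\setminus(-a,a)^2$ with $1<a<a'<\sqrt2$ chosen so that $\mathcal A$ separates the two circles and decomposes, pinwheel-fashion, into four congruent geometric rectangles $P_1,\dots,P_4$ of aspect ratio exactly $10$ (for instance $a=1.1$, $a'=1.344$). For each tile $P_j$ let $\alpha_j\subset\partial P_j$ be the arc lying on the inner boundary of $\mathcal A$. Now take any curve $\eta$ in $f(A)$ joining the two boundary circles; its preimage joins $\{|z|=1\}$ to $\{|z|=2\}$ and so must cross $\mathcal A$, and following it from its last passage through the inner boundary of $\mathcal A$ one extracts a subcurve of $\eta$ lying in a single tile $f(P_j)$ and joining $f(\alpha_j)$ to the complementary boundary arc of $f(P_j)$. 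By the second preliminary fact there is, for each $j$, a metric $\rho_j$ supported on $f(P_j)$, admissible for that tile-family, with $A(\rho_j)\le C_1(K)$. Then $\rho:=\max_j\rho_j$ is admissible for $\Gamma$ — every $\eta\in\Gamma$ has $\ell_{\rho_j}(\eta)\ge1$ for the relevant $j$, hence $\ell_\rho(\eta)\ge1$ — while $A(\rho)\le\sum_j A(\rho_j)\le 4C_1(K)$. Therefore $\Mod\Gamma\le 4C_1(K)$, i.e. $\Mod f(A)\ge 1/\bigl(4C_1(K)\bigr)=:M_0$, and Lemma~\ref{testing-annuli} completes the proof.

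The main obstacle is exactly this last step: ruling out that $f$ squeezes the annulus. The series and parallel rules applied to the tiles $P_j$ yield, almost for free, the reverse bound (a separating curve overflows the transverse families of the tiles, giving an upper bound on $\Mod f(A)$); the estimate we actually need — that crossing the tiled annulus $f(\mathcal A)$ costs a definite amount of modulus, however $f$ distorts the configuration — is what forces the gluing of admissible metrics above, and it genuinely uses the \emph{upper} bounds on the moduli of the tile-families, hence both inequalities in \eqref{eq:rectangle-qc}. A secondary nuisance is that $\alpha_j$ is a proper sub-arc of a side of $P_j$, so the relevant tile-families join an arc to its complement rather than opposite sides; this is absorbed into the second preliminary fact, and could instead be avoided by replacing $\mathcal A$ with a finely subdivided polygonal annulus.
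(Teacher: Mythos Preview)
Your reduction to Lemma~\ref{testing-annuli} via a lower bound $\Mod f(A)\ge M_0$ for round annuli $A=A(z,r,2r)$ is exactly the route the paper takes. The gap is in the pinwheel step. In each tile $P_j$ the inner arc $\alpha_j$ and the complementary arc $\partial P_j\setminus\alpha_j$ share both endpoints of $\alpha_j$; hence the family of curves in $P_j$ joining them has \emph{infinite} modulus (near either shared corner there are connecting arcs of arbitrarily small diameter, so no metric of finite area can be admissible). Your ``second preliminary fact'' was stated for \emph{disjoint} closed arcs and therefore does not apply here, and without it you have no finite bound on $\Mod\Gamma_j$ in the image either. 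Concretely: the crossing curve may last leave the inner square very near a pinwheel corner and slip immediately into the neighbouring tile, so the extracted subcurve in $P_j$ can have arbitrarily small length. This is not a ``secondary nuisance'' but precisely the obstruction, and replacing the pinwheel by a finer \emph{partition} does not help --- adjacent tiles in any partition share edges and the same corner pathology recurs. (Separately, the second preliminary fact itself --- bounded distortion of arbitrary arc-to-arc moduli from the aspect-ratio-$10$ hypothesis alone --- is far from a ``routine subdivision''; it is essentially equivalent to what you are trying to prove.)

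The paper, following Hinkkanen~\cite{hinkkanen}, sidesteps this by abandoning partitions for \emph{overlapping} rectangles. It places sixteen aspect-ratio-$10$ rectangles in the standard annulus $\{1<|z|<2\}$: eight rotates $P_j$ of $[1,1.3]\times[-1.5,1.5]$ and eight rotates $Q_j$ of $[0,2]\times[-0.1,0.1]$, through multiples of $\pi/4$. The key is a dichotomy: a crossing curve confined to a single sector of angular width about $\pi/4$ must contain a full horizontal crossing of the corresponding $P_j$, while a curve that wanders between sectors must contain a full vertical crossing of some $Q_j$. Either way the curve contains a \emph{side-to-opposite-side} crossing of one of the sixteen rectangles, so the hypothesis~\eqref{eq:rectangle-qc} applies directly to produce an admissible metric of area $\le 10K$ on each; their sum is admissible for the crossing family of $f(A)$ with area $O(K)$. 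No arc-to-arc preliminary is needed.
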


\begin{proof}
We follow the argument from Hinkkanen's paper \cite{hinkkanen}. Let $A$ denote the standard annulus $\{z : 1 < |z| < 2\}$ of modulus 2. Consider the following collection of 16 rectangles of modulus 10:
\begin{alignat*}{2}
P_0 & = [1, 1.3] \times [-1.5,1.5], \qquad  && P_j = e(j/8) \cdot P_0, \\
Q_0 & = [0,2] \times [-0.1, 0.1], \qquad  && Q_j = e(j/8) \cdot Q_0,
\end{alignat*}
with $j=0,1,2,\dots, 7$. We claim that if $\gamma$ is a curve that  connects the boundary components of $A$,
then $\gamma$ contains a {\em horizontal} crossing of some $P_j$ or a {\em vertical} crossing of some $Q_j$. Indeed, if $\gamma$ is confined to
a sector $$\bigl \{z: \, -0.27 \pi + 0.25 \pi \cdot j \, < \, \arg z \, < \, 0.27 \pi + 0.25 \cdot j  \bigr \}$$ then $\gamma$ contains a horizontal crossing of $P_j$.
Howevever, since
$$
Q_j \subset \bigl \{z: \, -0.02 \pi + 0.25 \pi \cdot j \, < \, \arg z \, < \, 0.02 \pi + 0.25 \cdot j \bigr \},
$$
any wandering curve contains a vertical crossing of some $Q_j$.

An arbitrary annulus of modulus 2 in $U$ can be expressed as the image of $A$ under a complex-linear map $L(z) = az + b$. By assumption, for each $j = 0, 1,2,\dots, 7$, we can find a metric $\rho^*_{L(P_j)}$ of area $\le 10K$ which is admissible for $f \bigl (\Gamma_{\leftrightarrow}(L(P_j)) \bigr )$ and a metric $\rho^*_{L(Q_j)}$ of area $\le 10K$ which is admissible for $f \bigl (\Gamma_{\updownarrow}(L(Q_j)) \bigr )$. Since the metric
$$
\rho^*_{L(A)} = \sum_{j=0}^7 \rho^*_{L(P_j)} +  \sum_{j=0}^7 \rho^*_{L(Q_j)}
$$
has area at most $1960\, K$ and is admissible for the family of curves that connect the boundary components of $f(L(A))$, $\Mod f(L(A))$ is bounded from below by a definite constant (depending on $K$).

Finding an upper bound for $\Mod f(L(A))$ amounts to constructing a metric which is admissible for the family of curves which separate the boundary components of $f(L(A))$.
This only requires one rectangle, e.g.~$L(Q_0)$.
\end{proof}

\begin{remark}
Strangely enough, it is not known whether an orientation-preserving homeomorphism which distorts moduli of squares by a bounded amount must be  quasiconformal.
\end{remark}

Let  $\mathcal R(\varepsilon)$ denote the set of rectangles in the plane  whose sides have length at least $\varepsilon$. For a bounded domain $U \subset \mathbb{C}$, let $\mathcal R_U(\varepsilon)$ denote the collection of rectangles in $\mathcal R(\varepsilon)$ that are compactly contained in $U$. We say that an orientation-preserving homeomorphism $f: U \to V$ is $(K, \varepsilon)$ {\em roughly quasiconformal} if
 $$
(1/K) \cdot \Mod R \, \le \,  \Mod f(R) \, \le \, K \cdot \Mod R, \qquad R \in \mathcal R_U(\varepsilon),
 $$
 for some $K \ge 1$.
 We have the following compactness criterion for families of roughly quasiconformal maps:
 \begin{lemma}
 \label{compactness-criterion}
 Let $U \subset \mathbb{C}$ be a domain in the complex plane containing $0, 1$.
Suppose $f_n: U \to \mathbb{C}$ is a sequence of $(K, \varepsilon_n)$ roughly quasiconformal maps with $\varepsilon_n \to 0$ as $n \to \infty$.
Assume that $f_n(0)=0,\, f_n(1)=1$ for each $n$.
Then the sequence $\{ f_n \}$ is uniformly equicontinuous on compact subsets of $U$ and any subsequential limit is an $L(K)$ quasiconformal homeomorphism.
\end{lemma}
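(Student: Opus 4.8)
The plan is to combine two ingredients: the rigidity of ring domains of large modulus, and the fact that ``distorting moduli of rectangles of size $\ge\varepsilon_n$'' with $\varepsilon_n\to 0$ becomes, in the limit, ``distorting moduli of \emph{all} rectangles''. Concretely I would proceed in four steps: (1) upgrade rough quasiconformality to a scale-restricted weak quasisymmetry estimate; (2) deduce uniform equicontinuity, uniform boundedness, and a uniform ``anti-collapse'' lower bound from (1) together with the normalization $f_n(0)=0$, $f_n(1)=1$; (3) extract a subsequential limit $f$ by Arzel\`a--Ascoli and show it is a homeomorphism onto its image; (4) show $f$ distorts moduli of all rectangles in $U$ by the factor $K$ and invoke Lemma~\ref{testing-rectangles}.

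For step (1) I would establish that there are an absolute constant $C_0$ and a constant $H=H(K)$ so that for every $n$, every $x$, and every $r\ge C_0\varepsilon_n$ with $\overline{B(x,4r)}\subset U$ one has $\sup_{|z-x|=r}|f_n(z)-f_n(x)|\le H\inf_{|z-x|=r}|f_n(z)-f_n(x)|$. This comes from running the proofs of Lemmas~\ref{testing-rectangles} and~\ref{testing-annuli}: the former shows that bounded distortion of moduli of aspect-ratio-$10$ rectangles forces bounded distortion of moduli of round annuli $A(x,\rho,2\rho)$, and the latter turns that annulus bound into the weak quasisymmetry inequality. Inspecting both arguments, the estimate at scale $r$ involves only finitely many rectangles whose dimensions are comparable to $r$ and which lie inside $\overline{B(x,4r)}$, so for $r\ge C_0\varepsilon_n$ they all belong to $\mathcal R_U(\varepsilon_n)$ and the hypothesis applies.

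For step (2), fix a compact connected set $K'\subset U$ with $0,1\in K'$ (it suffices to treat such sets) and put $r_0=\tfrac1{10}\dist(K',\partial U)$. For $n$ with $C_0\varepsilon_n<\min(r_0,\tfrac1{10})$: a fixed finite chain of balls of a fixed small radius $r_2>0$ joining any two points of $K'$, the weak quasisymmetry inequality, and $|f_n(0)-f_n(1)|=1$ give $1\le\diam f_n(K')\le M(K',K)$ and a uniform lower bound $\diam f_n(B(x,r_2))\ge c(K',K)>0$ for every $x\in K'$. From the latter and the three-point inequality one obtains the anti-collapse estimate $|f_n(x)-f_n(y)|\ge m(\beta,K',K)>0$ for $x,y\in K'$ with $|x-y|\ge\beta$ (pick $z$ with $|z-x|=r_2$ and $|f_n(z)-f_n(x)|$ comparable to $\diam f_n(B(x,r_2))$, then apply quasisymmetry to the triple $x,z,y$). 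For equicontinuity, when $|x-y|$ is small the round annulus $A\big(x,\max(|x-y|,C_0\varepsilon_n),r_0\big)$ separates $\{x,y\}$ from $\partial B(x,r_0)$; summing the distortion bounds over dyadic subannuli (addition theorem for moduli of nested ring domains) shows its image ring has modulus at least $c_1(K)\log_2\!\big(r_0/\max(|x-y|,C_0\varepsilon_n)\big)-c_2(K)$, while its bounded complementary component is $f_n\big(\overline{B(x,\max(|x-y|,C_0\varepsilon_n))}\big)$, which contains $f_n(x)$ and $f_n(y)$, and the other complementary component is a continuum of diameter $\ge c(K',K)$ at bounded distance; a Teichm\"uller-type estimate for ring domains then yields $|f_n(x)-f_n(y)|\le\omega_{K'}(|x-y|)$ with $\omega_{K'}(0^+)=0$ when $|x-y|\ge C_0\varepsilon_n$, and $|f_n(x)-f_n(y)|\le\omega_{K'}(C_0\varepsilon_n)$ always. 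Since $\varepsilon_n\to0$, this gives equicontinuity on $K'$ of $\{f_n:n\ge N_1\}$; the finitely many remaining $f_n$ are individually uniformly continuous on $K'$, so the whole family is equicontinuous and uniformly bounded there. \textbf{This step is the main obstacle:} all constants must be independent of $n$ and all chains must stay inside $U$, and one must carry the scale floor $\varepsilon_n$ through every estimate.

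For step (3), Arzel\`a--Ascoli gives, along any subsequence, a further subsequence converging uniformly on compact subsets of $U$ to a continuous $f:U\to\mathbb{C}$ with $f(0)=0$, $f(1)=1$; the anti-collapse bound passes to the limit and shows $f$ is injective, hence (invariance of domain) a homeomorphism onto the open set $f(U)$, orientation-preserving as a homeomorphic limit of orientation-preserving homeomorphisms. For step (4), let $R$ be any rectangle compactly contained in $U$; for $n$ large $R\in\mathcal R_U(\varepsilon_n)$, so $(1/K)\Mod R\le\Mod f_n(R)\le K\Mod R$, and since $f_n\to f$ uniformly on $\overline R$ and $f$ is a homeomorphism, the marked conformal rectangles $f_n(R)$ converge to the nondegenerate marked conformal rectangle $f(R)$ in the Hausdorff topology; continuity of the modulus then gives $(1/K)\Mod R\le\Mod f(R)\le K\Mod R$. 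Exhausting an arbitrary rectangle in $U$ by compactly contained subrectangles extends this to all rectangles in $U$, in particular those of aspect ratio $10$, so Lemma~\ref{testing-rectangles} shows that $f$ is $L(K)$ quasiconformal.
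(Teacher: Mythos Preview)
Your argument is correct and follows essentially the same route as the paper: upgrade rough quasiconformality to a scale-restricted quasisymmetry/annulus bound via the proofs of Lemmas~\ref{testing-rectangles} and~\ref{testing-annuli}, use the normalization to get equicontinuity and an anti-collapse bound, and then pass the modulus inequalities to the limit. The paper's presentation is a bit more economical in Step~(2): instead of separately invoking a ring-modulus/Teichm\"uller estimate for equicontinuity and a quasisymmetry iteration for anti-collapse, it records a single two-sided doubling inequality $2^{-\beta}\le \diam f_n(B(x,r))/\diam f_n(B(x,2r))\le 2^{-\alpha}$ at scales $r\ge 5\varepsilon_n$, iterates it to get $B(f_n(x),c_1 r^{\beta})\subset f_n(B(x,r))\subset B(f_n(x),c_2 r^{\alpha})$, and reads off both conclusions at once; it then verifies quasiconformality of the limit via the annulus test (Lemma~\ref{testing-annuli}) rather than the rectangle test, but this is a cosmetic difference.
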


\begin{proof}
The proof of Lemma \ref{testing-rectangles} shows that $f_n$ distorts the moduli of all annuli $A(z, r, 2r) \subset U$ with $r \ge 5 \varepsilon_n$ by a bounded amount, depending on $K$. The proof of Lemma \ref{testing-annuli} shows that $f_n$ satisfies the doubling property at scales larger than $5  \varepsilon_n$, i.e.~there exist positive constants $\alpha < \beta$ depending only on $K$ such that
\begin{equation}
\label{eq:doubling}
0 \, < \, 2^{-\beta} \, \stackrel{(\ref{eq:quasisymmetry2})}{\le} \, \frac{\diam f_n \bigl (B(x,r) \bigr )}{\diam f_n \bigl (B(x,2r) \bigr)} \, \stackrel{(\ref{eq:quasisymmetry3})}{\le} \, 2^{-\alpha} \, < \, 1,
\end{equation}
if $r \ge 5  \varepsilon_n$ and $B(x, 4r) \subset U$.
Fix a compact connected set $F \subset U$ containing $0, 1$. Since $f_n(0) = 0$, $f_n(1) = 1$, iterated application of the doubling property shows that there exist constants $c_1(F), c_2(F) > 0$ such that
$$
B \bigl (f_n(x), c_1(F) r^{\beta} \bigr ) \, \subset \, f_n \bigl (B(x, r) \bigr ) \, \subset \, B \bigl (f_n(x), c_2(F) r^{\alpha} \bigr ),
$$
for any $x \in F$ and $5 \varepsilon_n \le r \le \frac{1}{4} \dist(F, \partial U)$.
It follows that any subsequential limit $f$ is a homeomorphism. Since $\Mod f_n \bigl (A(z,r,2r) \bigr ) \to \Mod f \bigl (A(z,r,2r) \bigr )$, $f$ verifies the assumptions of Lemma \ref{testing-annuli} and is therefore quasiconformal.
\end{proof}

Suppose $f: U \to \mathbb{C}$ is an orientation-preserving homeomorphism which is differentiable almost everywhere. By examining the behaviour of $f$ near points of differentiability, it is easy to see that $f$ is $K$ quasiconformal if and only if it distorts moduli of squares in $U$ by at most $K$. We will apply this observation when $f$ is the limit of roughly quasiconformal homeomorphisms $f_n$ where each $f_n$ distorts  moduli of squares $S \in \mathcal S_n$ by at most $K$, the collections $\mathcal S_n$ increase with $n$ and their union $\bigcup_{n = 1}^\infty \mathcal S_n$ is dense in the set of all squares in $U$ of diameter at most 1. For $\mathcal S_n$, we will use one of the collections below:

\begin{itemize}
\item
Let  $\mathcal S(\varepsilon)$ denote the set of squares in the plane with side length between $\varepsilon$ and 1. For a bounded domain $U \subset \mathbb{C}$, we let $\mathcal S_U(\varepsilon)$ denote
the collection of squares in $\mathcal S(\varepsilon)$ that are compactly contained in $U$.
\item
For $0 < \varepsilon \le 1$, let $\mathcal S'(\varepsilon)$ denote the set of squares in the plane that belong to one of the grids $e^{2\pi i (k/n)} \cdot j \varepsilon \mathbb{Z}^2$, $1 \le j, k \le n$  where $n = \lceil 1/\varepsilon \rceil$. If $U$ is a bounded domain, the collection $\mathcal S'_U(\varepsilon)$ of squares compactly contained in $U$ is finite.
By construction, $\bigcup_{\varepsilon > 0} \mathcal S'_U(\varepsilon)$ is dense in the set of all squares contained in $U$ of diameter at most 1.
\end{itemize}

\section{A lemma on percolation}
\label{sec:percolation}

In this section, we present a lemma on percolation which will allow us to control the moduli of  images of rectangles under an orientation-preserving homeomorphism that is quasiconformal off a random set of small measure.

Consider the infinite square grid $\mathbb{Z}^2$. Fix the percolation parameter $0 < r < 1$. In the discrete setting, we colour vertices of  $\mathbb{Z}^2$ in two colours: we colour a vertex {\em yellow} with probability $r$ and {\em blue} with probability $1-r$. For two points $x,y \in \mathbb{Z}^2$, we define
their {\em combinatorial distance} $d_{\mathbb{Z}^2}(x,y)$ as the minimal length of a path $$x_0, x_1, x_2, \dots, x_n, \qquad x_0 = x, \quad x_n = y,$$ where $x_i \sim x_{i+1}$ are adjacent vertices. By the Pythagorean theorem, the combinatorial
distance is within a factor of $\sqrt{2}$ of the Euclidean distance. We are more interested
in the {\em chemical distance} $d_{\chem}(x,y)$ which minimizes the number of blue vertices along paths that connect $x$ to $y$.
The following lemma says that if the points $x,y$ are at macroscopic distance from one another, then the chemical distance is also equivalent to the Euclidean distance:

\begin{lemma}
\label{mathieu-lemma}
There exists a universal constant $0 < r_0 <1/2$ so that if the percolation parameter $0 < r < r_0$ is sufficiently small, then with probability $\ge 1 - 1/N^2$, for two points $x,y \in [-N,N] \times [-N,N]$ with $|x-y| \ge \log N$,
$$
\frac{9}{10} \cdot d_{\mathbb{Z}^2}(x,y) \,\le\, d_{\chem}(x,y) \,\le\, d_{\mathbb{Z}^2}(x,y).
$$
\end{lemma}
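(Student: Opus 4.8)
The plan is to dispatch the (essentially trivial) upper bound directly and to reduce the lower bound to a union bound over self-avoiding paths controlled by a Chernoff estimate, exactly the sort of elementary subcritical-percolation argument attributed to Mathieu in the introduction.

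The inequality $d_{\chem}(x,y)\le d_{\mathbb{Z}^2}(x,y)$ is immediate: a $\mathbb{Z}^2$-geodesic from $x$ to $y$ has $d_{\mathbb{Z}^2}(x,y)-1$ interior vertices, hence at most that many blue ones (the additive $1$'s play no role anywhere below since $d_{\mathbb{Z}^2}(x,y)\to\infty$). So all the content is in the lower bound, and by a union bound over the at most $(2N+1)^4$ ordered pairs $x,y\in[-N,N]\times[-N,N]$ it suffices to show that, for a fixed pair with $D:=d_{\mathbb{Z}^2}(x,y)\ge\log N$, the bad event
$$
\mathcal B_{x,y}=\bigl\{\, d_{\chem}(x,y)<\tfrac{9}{10}D \,\bigr\}
$$
has probability decaying faster than any fixed power of $N$.

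To bound $\mathbb P[\mathcal B_{x,y}]$: if $\mathcal B_{x,y}$ occurs then, after deleting loops, there is a \emph{self-avoiding} path $\gamma$ from $x$ to $y$ with some number $\ell\ge D$ of edges (hence $\ell+1$ distinct vertices) carrying fewer than $\tfrac{9}{10}D$ blue vertices, equivalently more than $\ell+1-\tfrac{9}{10}D$ yellow vertices. Since $\gamma$ is self-avoiding its vertices are coloured independently, so the number of yellow vertices on $\gamma$ is distributed as $\mathrm{Bin}(\ell+1,r)$; and since $\ell\ge D$ one checks $\ell+1-\tfrac{9}{10}D\ge\tfrac{1}{10}(\ell+1)$, so we are demanding that at least a tenth of the vertices of $\gamma$ be yellow. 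A crude Chernoff bound gives $\mathbb P\bigl[\mathrm{Bin}(\ell+1,r)\ge \ell+1-\tfrac{9}{10}D\bigr]\le(10er)^{\,\ell+1-\frac{9}{10}D}\le(10er)^{(\ell+1)/10}$. As there are at most $4\cdot 3^{\ell-1}$ self-avoiding paths of length $\ell$ issuing from $x$, summing the geometric series over $\ell\ge D$ — which converges once $q:=3\,(10er)^{1/10}<1$ — yields
$$
\mathbb P[\mathcal B_{x,y}]\ \le\ C(r)\,q^{\,D},\qquad q=3\,(10er)^{1/10},
$$
with $q\to0$ as $r\to0$. Finally, for each of the $\le(2N+1)^4$ pairs with $|x-y|\ge\log N$ we have $D\ge|x-y|\ge\log N$, so $\mathbb P[\mathcal B_{x,y}]\le C(r)\,q^{\log N}$, a negative power of $N$ whose exponent tends to $-\infty$ as $r\to0$; hence once $r$ is below a suitable small universal constant $r_0<1/2$, the sum $C(r)(2N+1)^4 q^{\log N}$ is $\le 1/N^2$ for all large $N$, the finitely many small values of $N$ being harmless (or absorbed by shrinking $r_0$).

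I expect the \textbf{main obstacle} to be precisely this last balancing step: the hypothesis only separates $x$ from $y$ by $\log N$, so the per-pair estimate must beat every fixed power of $N$, which is impossible with a slowly decaying rate. This is the structural reason one cannot take $r_0$ near the percolation threshold: one needs $-\log q=\log\tfrac13+\tfrac1{10}\log\tfrac1{10er}$ to be large, forcing $r_0$ to be a very small universal constant. A secondary point worth noting in the write-up is that a $d_{\chem}$-minimizing path may be arbitrarily long, which is why the union bound must run over self-avoiding paths of \emph{every} length $\ell\ge D$ and why summability forces $3(10er)^{1/10}<1$. Everything else — reducing to self-avoiding paths, the path count $4\cdot 3^{\ell-1}$, and the binomial tail bound — is routine.
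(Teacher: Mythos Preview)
Your argument is correct and follows essentially the same approach as the paper: a union bound over self-avoiding paths, a binomial tail estimate for the probability that a given path has at least a $\tfrac{1}{10}$-fraction of yellow vertices, and a geometric-series summation over path lengths $\ell\ge\log N$. The only cosmetic differences are that the paper union-bounds directly over all self-avoiding paths in the box (using the cruder count $(2N+1)^2\cdot 4^L$ and the cruder tail bound $2^{L+1}r^{L/10}$) rather than first fixing a pair $(x,y)$, but the structure and the final balancing against $\log N$ are identical.
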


\begin{proof}
The number of non-self intersecting  paths in $[-N,N] \times [-N,N]$ of length $L$ is bounded above by $(2N+1)^2 \cdot 4^L$ since there are  $(2N+1)^2$ choices for the initial vertex and at most four choices for each following vertex.
Since the probability that a fixed path of length $L$ contains at least $L/10$ yellow vertices is at most
$$\sum_{j=\lfloor L/10 \rfloor}^{L} {\binom{L}{j}} r^j(1-r)^{L-j} \, \le \, 2^L \sum_{j=\lfloor L/10 \rfloor}^{L} r^j \, \le \, 2^{L+1} \cdot r^{L/10},
$$
 the probability that some path of length $L \ge \log N$ contains at least $L/10$ yellow vertices is bounded above by
$$
2 (2N+1)^2 \cdot \sum_{L \ge \log N}  (8 r^{1/10})^L.
$$
A simple computation shows that the last quantity is at most $1/N^2$ provided $r$ is small enough.
\end{proof}

In the continuous setting, one independently colours each cell of $\mathbb{Z}^2$ either blue or yellow: {\em yellow} with probability $r$ and {\em blue} with probability $1-r$. The Euclidean distance minimizes the length of a rectifiable
curve that connects two given points in the plane. The continuous analogue of the chemical distance is defined by instead  minimizing the part of the length that is contained in the blue squares.

\begin{lemma}
\label{mathieu-lemma2}
There exists a universal constant $0 < r_0 < 1/2$ so that if the percolation parameter $0 < r < r_0$ is sufficiently small, then with probability $\ge 1 - 1/N^2$, for two points $x,y \in [-N,N] \times [-N,N]$ with $|x-y| \ge \log N$, their continuous chemical  distance is comparable to the Euclidean distance:
$$
\frac{1}{2} \cdot |x-y| \,\le\, |x-y|_{\chem} \,\le\, |x-y|.
$$
\end{lemma}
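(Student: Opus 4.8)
The plan is to reduce the lower bound to the discrete Lemma~\ref{mathieu-lemma}. The upper bound $|x-y|_{\chem}\le|x-y|$ is immediate: the part of the straight segment $[x,y]$ lying in blue cells has length at most $|x-y|$.

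For the lower bound, fix $x,y$ with $|x-y|\ge\log N$ and let $\gamma$ be a curve from $x$ to $y$ that (nearly) minimises the blue length $\ell_{\chem}(\gamma):=\int_\gamma\mathbf 1_{\text{blue}}\,ds$. The first step is to put $\gamma$ in a \emph{taut} form: inside each yellow cluster it meets, replace the relevant arc by a geodesic of that cluster (this does not change $\ell_{\chem}$, since arcs inside a cluster carry no blue length); prune any excursion that leaves and then re-enters the same cluster (this does not increase $\ell_{\chem}$); and inside each connected component of the open blue region, replace $\gamma$ by a geodesic of that component (this only decreases $\ell_{\chem}$). A taut curve is cell-straight, visits each cell at most once (here one uses the geodesic structure, both inside the clusters and inside the blue region), and so traces out a \emph{simple} path of cells $\pi=Q_1,\dots,Q_L$, consecutive ones sharing an edge or a corner, running from a cell containing $x$ to one containing $y$; in particular $L\ge|x-y|/\sqrt2$.

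The second step estimates the two kinds of cells of $\pi$. If the sojourn of $\gamma$ in a cluster $C$ enters at $p_s$ and leaves at $q_s$, then $p_s,q_s$ lie in cells of $C$ that occur consecutively in $\pi$; since $k$ consecutive cells of $\pi$ have diameter at most $\sqrt2\,k$, summing over sojourns gives $\sum_s|p_s-q_s|\le\sqrt2\cdot\#\{\text{yellow cells of }\pi\}$. On the other hand, the blue part of $\gamma$ is a disjoint union of geodesic arcs lying in the open blue region, so it meets only blue cells, and — being built from straight pieces whose successive breakpoints are distinct grid points, hence at distance $\ge1$ — it meets only $O\bigl(\ell_{\chem}(\gamma)\bigr)$ cells in all; hence $\#\{\text{blue cells of }\pi\}=O\bigl(\ell_{\chem}(\gamma)\bigr)$. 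Now apply the counting argument from the proof of Lemma~\ref{mathieu-lemma} to paths that may also take diagonal steps (this turns $4^L$ into $8^L$ and merely forces a smaller $r_0$): with probability $\ge1-1/N^2$, every non-self-intersecting path of length $L\ge\tfrac12\log N$ inside $[-2N,2N]^2$ has at most $\varepsilon_0 L$ yellow cells, where $\varepsilon_0$ can be made as small as we like by shrinking $r_0$. Since $L\ge|x-y|/\sqrt2\ge\tfrac12\log N$, this forces $\#\{\text{yellow cells of }\pi\}=O(\varepsilon_0)\cdot\#\{\text{blue cells of }\pi\}=O(\varepsilon_0)\,\ell_{\chem}(\gamma)$. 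Combining with the triangle inequality,
$$|x-y|\ \le\ \ell_{\chem}(\gamma)+\sum_s|p_s-q_s|\ \le\ \bigl(1+O(\varepsilon_0)\bigr)\,\ell_{\chem}(\gamma),$$
so $\ell_{\chem}(\gamma)\ge(1-o_r(1))\,|x-y|\ge\tfrac12|x-y|$ once $r_0$ is small enough. Taking the infimum over $\gamma$ and noting that the event above handles all $x,y\in[-N,N]^2$ simultaneously completes the proof.

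The step I expect to be the real obstacle is the reduction to a taut curve: one must verify that straightening, re-routing inside clusters, and pruning cluster-excursions never increase the blue length, and — more delicately — that the resulting cell path is simple (or at least of bounded multiplicity, which the estimates also tolerate), so that the counting lemma applies and so that $\sum_s|p_s-q_s|$ is genuinely controlled by the number of yellow cells of $\pi$ rather than by their diameters. Once the taut curve is in hand, the remainder is bookkeeping together with the argument of Lemma~\ref{mathieu-lemma} transferred essentially verbatim.
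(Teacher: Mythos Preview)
Your argument is considerably more elaborate than what the paper does. The paper's entire proof of Lemma~\ref{mathieu-lemma2} is a single sentence: one records the \emph{discrete itinerary} of a continuous path (the sequence of grid cells it traverses), notes that by the proof of Lemma~\ref{mathieu-lemma} this itinerary of length $L$ has at most $L/10$ yellow cells on the good event, and concludes that the continuous path's length inside blue cells is at least $L/2$. The implicit mechanism is simply that the straightened path still joins $x$ to $y$ and hence has total length at least $|x-y|$, while its yellow portion consists of at most $L/10$ segments of length at most $\sqrt2$; subtracting gives the blue length.

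You instead introduce a tautening procedure (geodesics in yellow clusters and in blue components, pruning of cluster re-entries), extract a simple cell path $\pi$, bound the number of blue cells of $\pi$ by $O(\ell_{\chem})$ via a breakpoint count, and only then invoke the discrete counting argument. This buys you explicit constants and an argument whose dependence on $r$ is transparent, but it is much heavier machinery than the paper needs. The paper never needs to count blue cells in terms of $\ell_{\chem}$, never needs geodesics in blue components, and never needs to worry about breakpoints: it works directly with the cell-by-cell straightening and the triangle inequality.

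Two points in your write-up would need repair before it is complete. First, your claim that ``successive breakpoints are distinct grid points'' fails at the endpoints of each blue geodesic arc --- those lie on grid \emph{edges} but not generally at grid \emph{points} --- so the bound $\#\{\text{blue cells of }\pi\}=O(\ell_{\chem})$ acquires an additive $O(\#\text{yellow sojourns})$ term that you then have to absorb. Second, and as you yourself flag, the simplicity of $\pi$ is not automatic: the taut curve can revisit a blue component (and hence a blue cell) several times, since you prune re-entries into yellow clusters but not into blue components, and within a multiply-connected blue component a geodesic may well pass through the same cell twice. You can likely rescue the argument with a bounded-multiplicity version, but this needs to be said. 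The paper's route sidesteps both issues by never trying to make the itinerary simple in your sense --- it just uses the raw cell itinerary and the elementary length bound.
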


To deduce the continuous case from the discrete case, note that if the discrete itinerary of a path of length $L \ge \log N$ contains at most $L/10$ yellow squares, the total length that the continuous path spends in the blue squares is at least $L/2$.

\begin{modifications}
(i) By making $r_0 > 0$ sufficiently small, one can replace $9/10$ and $1/2$ in the lemmas above with arbitrary constants less than 1.

(ii) The proof works essentially verbatim for any graph whose vertices have bounded valence.

(iii)  Fix an odd  integer $m \ge 1$. As before, colour a cell {\em yellow} with probability $r$ and {\em blue} with probability $1-r$. We call a cell $\square$ {\em deep blue} if all cells $\hat \square \subset m \square$ are blue, where $m \square$ denotes the square with the
same center as $\square$ and  side length $m \cdot \ell(\square)$. We claim that there exists a universal constant $r_0(m)$ such that if the percolation
 parameter $r < r_0(m)$, then the total length that a continuous path in $\Gamma_{\leftrightarrow}$ spends in the deep blue squares is at least $N/2$.

To prove the discrete version of the claim, note that if there is a path  $x = x_0, x_1, x_2, \dots, x_n = y$ which contains more than $N/10$ vertices that are not deep blue, then there exists
a chain $x = x'_0, x'_1, x'_2, \dots, x'_n = y$ with $d_{\mathbb{Z}^2}(x'_i,x'_{i+1}) \le 2m-1$, $0 \le i \le n-1$ which contains more than $N/10$ yellow vertices. The continuous case follows from the discrete case.

(iv) We can slightly weaken the independence assumption when deciding to colour a cell blue or yellow: it is enough to require that the colours of any finite collection of cells $\square_1, \square_2, \dots, \square_n$ with
$d_{\mathbb{Z}^2}(\square_i, \square_j) \ge m$, $i \ne j$ are independent. As in (iii), $r_0$ depends on $m$. To see the discrete version of the statement, observe that if a path of length $L$ contains $\gtrsim L$ yellow vertices, then it contains an $m$-separated set of $\gtrsim L/m^2$ vertices. Again, the continuous case follows from the discrete case.
\end{modifications}

\section{Approximate conformality}

In this section, we run  percolation  with parameter $r$ on $S_1  = [-1, 1] \times [-1, 1]$ with mesh size $\delta = 1/N$.
The following lemma says that if an orientation-preserving homeomorphism is conformal outside a random set of small measure, then it is close to a conformal map:

\begin{lemma}
\label{blue-conformality}
For any $\varepsilon > 0$, there exist $r_0, N_0$ so that if $r < r_0$ and $N > N_0$, then with probability at least $1-\varepsilon$,
any orientation-preserving homeomorphism $f: S_1 \to \mathbb{C}$  that is conformal on the blue squares satisfies
$$
1- \varepsilon \, < \, \Mod f(S) \, < \, 1 + \varepsilon, \qquad \forall  S \in \mathcal S_{S_1}(\varepsilon).
$$
\end{lemma}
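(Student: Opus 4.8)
\section*{Proof proposal for Lemma \ref{blue-conformality}}

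The plan is to sandwich $\Mod f(S)$ between two quantities close to $1$ using a single, explicitly constructed test metric, together with the duality relation \eqref{eq:product-of-moduli}. Fix $\varepsilon>0$ and choose an auxiliary constant $\varepsilon'=\varepsilon'(\varepsilon)>0$ so small that $(1-\varepsilon')^{-2}<1+\varepsilon$ and $(1-\varepsilon')^{2}>1-\varepsilon$. It suffices to show: on a suitable high‑probability event depending only on the colouring, \emph{every} orientation‑preserving homeomorphism $f$ that is conformal on the blue squares satisfies $\Mod\Gamma_{\leftrightarrow}\bigl(f(S)\bigr)\le (1-\varepsilon')^{-2}$ for \emph{every} $S\in\mathcal S_{S_1}(\varepsilon)$. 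Indeed, applying this to the conjugate family $\Gamma_{\updownarrow}\bigl(f(S)\bigr)$ (which is $\Gamma_{\leftrightarrow}$ of the same square with the distinguished pair of sides interchanged, still a square in $\mathcal S_{S_1}(\varepsilon)$) and invoking \eqref{eq:product-of-moduli} gives $\Mod\Gamma_{\leftrightarrow}\bigl(f(S)\bigr)\ge(1-\varepsilon')^{2}$ as well, whence $1-\varepsilon<\Mod f(S)<1+\varepsilon$.

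For the test metric, set $\rho(w)=|(f^{-1})'(w)|$ on the image under $f$ of the (interiors of the) blue cells, and $\rho=0$ elsewhere; this makes sense because $f$ is conformal on the blue region, so $f^{-1}$ is holomorphic there. Since $f$ is conformal on that region, the conformal change of variables $w=f(z)$ shows that $\rho^{2}\,|dw|^{2}$ on $f(S)$ is the pushforward of Lebesgue measure on $(\text{blue cells})\cap S$, so that $A(\rho)=\operatorname{Area}\bigl((\text{blue cells})\cap S\bigr)\le\ell(S)^{2}$ — with no probabilistic input whatsoever. The same change of variables shows that for a curve $\gamma\in\Gamma_{\leftrightarrow}\bigl(f(S)\bigr)$ the length $\ell_{\rho}(\gamma)$ equals the Euclidean length spent inside the blue cells by the pulled‑back curve $f^{-1}(\gamma)$; and since $f$ is a homeomorphism carrying the distinguished sides of $S$ onto those of $f(S)$, this $f^{-1}(\gamma)$ is a curve joining the two distinguished sides of $S$, hence its two endpoints lie at Euclidean distance $\ge\ell(S)$.

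Thus admissibility of $\rho/((1-\varepsilon')\ell(S))$ reduces to the statement that every curve joining the two distinguished sides of $S$ spends at least $(1-\varepsilon')\ell(S)$ of its length inside blue cells. After rescaling $S_1=[-1,1]^2$ to $[-N,N]^2$ with unit mesh, the distinguished sides of $S$ are separated by distance $\ge\ell(S)N\ge\varepsilon N\ge\log N$ once $N\ge N_0(\varepsilon)$, so this is exactly what Lemma \ref{mathieu-lemma2} — applied with percolation parameter $r<r_0(\varepsilon')$ and with part (i) of the Modifications replacing $\tfrac12$ by $1-\varepsilon'$ — delivers for the endpoints of $f^{-1}(\gamma)$, since the blue‑length of the curve $f^{-1}(\gamma)$ is at least the continuous chemical distance between its endpoints. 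Crucially, Lemma \ref{mathieu-lemma2} is already a statement about \emph{all} pairs of macroscopically separated points at once, on an event of probability $\ge 1-1/N^{2}\ge 1-\varepsilon$ (for $N\ge N_0$); hence this one event handles all $S\in\mathcal S_{S_1}(\varepsilon)$ simultaneously (regardless of position or orientation) and all admissible $f$. On that event $A\bigl(\rho/((1-\varepsilon')\ell(S))\bigr)\le(1-\varepsilon')^{-2}$, giving $\Mod\Gamma_{\leftrightarrow}\bigl(f(S)\bigr)\le(1-\varepsilon')^{-2}$, and the reduction above finishes the proof. Parameters are chosen in the order $\varepsilon'\rightsquigarrow r_0\rightsquigarrow N_0$.

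I expect the only genuinely delicate point to be this bookkeeping that avoids a union bound over the continuum of squares: there is no a priori continuity of $S\mapsto\Mod f(S)$ to fall back on, since $f$ need not be globally quasiconformal, so it matters that the percolation lemma is already uniform over points. The rest is routine: verifying the conformal change of variables across shared edges of adjacent blue cells (removable singularity), and noting that the relevant pieces of $f^{-1}(\gamma)$ — its intersections with the open blue region — are conformal images of rectifiable arcs and hence rectifiable with the expected length.
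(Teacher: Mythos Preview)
Your proof is correct and is essentially the paper's argument: the test metric $\rho(w)=|(f^{-1})'(w)|\cdot\chi_{f(\mathscr B\cap S)}$ is exactly the paper's $\rho_S^*(w)=[\Jac f^{-1}(w)]^{1/2}\cdot\chi_{f(\mathscr B\cap S)}$ (since $f$ is conformal on the blue cells), and both reduce admissibility to the continuous chemical-distance estimate of Lemma~\ref{mathieu-lemma2} with modification (i). The paper is terser---it stops after the upper bound and the choice $\eta=\varepsilon/3$---whereas you spell out the duality step via \eqref{eq:product-of-moduli} for the lower bound and make explicit that a single percolation event handles all $S\in\mathcal S_{S_1}(\varepsilon)$ at once; these are welcome clarifications, not a different route.
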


\begin{proof}
Recall that $\mathcal S_{S_1}(\varepsilon)$ denotes the collection of squares compactly contained in $S_1$ with $\ell(S) \ge \varepsilon$. To a square $S \in \mathcal S_{S_1}(\varepsilon)$, we associate the metric $\rho_S = \chi_{\mathscr B \cap S}$ where $\mathscr B$ is the union of the blue squares.
To estimate $\Mod f(S)$,  we use the conformal metric
  $$
  \rho_S^*(w) =  \bigl [ \Jac f^{-1}(w) \bigr ]^{1/2} \cdot \chi_{f(\mathscr B \cap S)}(w), \qquad w \in f(S).
  $$
By construction, $A(\rho_S^*) = A(\rho_S) \le \ell(S)^2$. By modification (i) of Lemma \ref{mathieu-lemma2}, for any $\eta > 0$, with probability at least
$1-\varepsilon$, the inequality
$\ell_{\rho^*_S}(f(\gamma)) \ge (1 - \eta) \ell(S)$ holds for any curve
$\gamma \in \Gamma_{\leftrightarrow}(S)$
and any square $S \in \mathcal S_{S_1}(\varepsilon)$, as long as $N > N_0$ is sufficiently large and $r < r_0$ is sufficiently small. In other words,
$\Mod f(S) \le (1-\eta)^{-2}$ for all $S \in \mathcal S_{S_1}(\varepsilon)$. It remains to choose $\eta = \varepsilon/3$.
\end{proof}

Let $\square \subset S_1$ be a cell which does not touch $\partial S_1$.
We say that an orientation-preserving homeomorphism $f: S_1 \to \mathbb{C}$ is {\em $\varepsilon$-close to linear} on $\square$ if
\begin{equation}
\label{eq:close-to-linear}
\|f - L_{\square} \|_{C(3\square)} \le \varepsilon \diam L(\square)
\end{equation}
 for some complex-linear map $L_{\square}(z) = a_{\square}z + b_{\square}$.

\begin{lemma}
\label{blue-conformality2}
For any $\varepsilon > 0$, there exist $r_0, N_0$ so that if $r < r_0$ and $N > N_0$, then with probability at least $1-\varepsilon$,
  any orientation-preserving homeomorphism $f: S_1 \to \mathbb{C}$ that is $\varepsilon$-close to linear on the blue squares satisfies
$$
1 - \varepsilon \, < \, \Mod f(S) \, < \, 1 + \varepsilon, \qquad \forall  S \in \mathcal S_{S_1}(\varepsilon).
$$
\end{lemma}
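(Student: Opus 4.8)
The plan is to follow the proof of Lemma \ref{blue-conformality} almost verbatim, replacing the conformal push-forward density $[\Jac f^{-1}]^{1/2}$ by the piecewise-constant density coming from the linear approximations. As in that lemma it suffices to prove the upper bound $\Mod f(S) < 1+\varepsilon$ for every $S \in \mathcal S_{S_1}(\varepsilon)$: since $S$ is a square, the conjugate family $f(\Gamma_{\updownarrow}(S))$ is the crossing family of $f(S')$, where $S'$ is the same square with its two marked sides interchanged; as $S' \in \mathcal S_{S_1}(\varepsilon)$ too, the upper bound applies to it and then (\ref{eq:product-of-moduli}) gives $\Mod f(S) > (1+\varepsilon)^{-1} > 1 - \varepsilon$. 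We may assume $\varepsilon$ is small. Now fix $S$ and write $\delta = 1/N$. For each blue cell $\square$ meeting $S$, pick a complex-linear map $L_\square(z) = a_\square z + b_\square$ realizing (\ref{eq:close-to-linear}), so that $\|f - L_\square\|_{C(3\square)} \le \varepsilon \diam L_\square(\square) = \sqrt2\,\varepsilon |a_\square|\delta$. Because $f$ is a homeomorphism, the images $f(\square)$ have pairwise disjoint interiors, so setting $\rho^*(w) = 1/|a_\square|$ for $w \in \operatorname{int} f(\square)$ with $\square$ a blue cell meeting $S$, and $\rho^* = 0$ elsewhere on $f(S)$, defines a metric.

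The area estimate is routine. The bound on $\|f - L_\square\|_{C(3\square)}$ confines $f(\square)$ to the $\sqrt2\,\varepsilon|a_\square|\delta$-neighbourhood of the square $L_\square(\square)$ of side $|a_\square|\delta$, so $|f(\square)| \le (1+C\varepsilon)\,|a_\square|^2\delta^2$. Summing over the at most $(\ell(S)/\delta + 2)^2$ relevant cells, $A(\rho^*) \le (1+C'\varepsilon)\,\ell(S)^2$ once $\delta$ is small enough.

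The heart of the matter is admissibility: one must show $\ell_{\rho^*}(f(\gamma)) \ge (1-\eta)\,\ell(S)$ for every rectifiable $\gamma \in \Gamma_{\leftrightarrow}(S)$, where $\eta = \eta(\varepsilon)$ is fixed so that $(1+C'\varepsilon)/(1-\eta)^2 < 1+\varepsilon$. By the overflowing principle we may take $\gamma$ to be a simple arc joining the two marked sides of $S$. Running along the itinerary of $\gamma$ through the $\delta$-cells, each maximal sub-arc $\alpha$ of $\gamma$ inside one cell $\square$ has endpoints $p,q \in \partial\square$ with $\operatorname{length} f(\alpha) \ge |f(p) - f(q)| \ge |a_\square|\bigl(|p-q| - 2\sqrt2\,\varepsilon\delta\bigr)$. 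Apply this bound only to those sub-arcs that cross a \emph{blue} cell with $|p-q| \ge \delta/2$, discarding the rest with $\operatorname{length} f(\alpha) \ge 0$; the factors $|a_\square|$ cancel and, since $2\sqrt2\,\varepsilon\delta \le 4\sqrt2\,\varepsilon\,|p-q|$ on the retained arcs,
$$
\ell_{\rho^*}(f(\gamma)) \;\ge\; (1 - 4\sqrt2\,\varepsilon)\sum\nolimits' |p - q|,
$$
the primed sum being over the blue cells entered ``substantially'' by $\gamma$. What remains is the assertion that $\sum' |p-q| \ge (1-\eta')\,\ell(S)$ with probability $\ge 1-\varepsilon$, uniformly in $S \in \mathcal S_{S_1}(\varepsilon)$ and in $\gamma$ — and this is exactly what Lemma \ref{mathieu-lemma2} (through its modification (i)) provides: a crossing curve must cover horizontal distance $\ell(S)$, while the yellow cells are so rare that the part of $\gamma$ lying in yellow cells, together with the ``dust'' of the short excursions discarded above, accounts for at most an $\eta'$-fraction of that distance. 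Taking $\eta = \varepsilon/10$ and chasing constants finishes the proof.

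I expect the entire difficulty to sit in this last step. A $C^0$-estimate on blue squares does \emph{not} in itself bound $\operatorname{length} f(\gamma\cap\square)$ from below — a homeomorphism $C^0$-close to a similarity can still unwind a wildly oscillatory sub-arc — and the additive errors $2\sqrt2\,\varepsilon\delta$ picked up at each cell-crossing must be prevented from accumulating. The threshold $|p-q| \ge \delta/2$ is precisely what defeats both issues: each retained crossing makes genuine progress $\gtrsim\delta$ dominating its own $O(\varepsilon\delta)$ error, there are only $O(\operatorname{length}(\gamma)/\delta)$ retained crossings along $\gamma$, and their total progress is bounded below by the percolation estimate. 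Should one want extra slack near cell boundaries (e.g.\ to make the comparison with the percolation lemma completely clean), one replaces ``blue'' by ``deep blue'' throughout and invokes modification (iii) of Lemma \ref{mathieu-lemma2} in place of modification (i); nothing else changes.
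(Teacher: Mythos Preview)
Your metric $\rho^*$ and the area estimate are exactly right, and they coincide with the paper's choices. The gap is in the admissibility step: the assertion that $\sum'|p-q|\ge(1-\eta')\ell(S)$ is \emph{not} what Lemma~\ref{mathieu-lemma2} provides, and in fact it can fail outright. Consider the case where every cell is blue and take the simple arc
\[
\gamma:\quad t\mapsto\bigl(t,\ \delta+\tfrac{\delta}{M}\sin(Mt/\delta)\bigr),\qquad 0\le t\le\ell(S),
\]
with $M$ large. This curve crosses $S$, but every maximal sub-arc of $\gamma$ in a cell enters and exits through the line $y=\delta$ at points $p,q$ with $|p-q|\asymp\delta/M<\delta/2$. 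Hence \emph{every} sub-arc is discarded and $\sum'|p-q|=0$. The percolation lemma is perfectly happy here (the blue length of $\gamma$ is enormous), but your lower bound gives nothing. The point is that Lemma~\ref{mathieu-lemma2} controls the \emph{arclength} spent in blue cells, not the sum of chord lengths over ``substantial'' crossings, and a curve can make all of its horizontal progress through arbitrarily many arbitrarily short cell-crossings. Replacing ``blue'' by ``deep blue'' does not touch this issue.

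The paper repairs exactly this problem by \emph{straightening} $\gamma$ first: one walks along $\gamma$, and from the current point $z_i$ one jumps by a straight segment to the \emph{last} exit point $z_{i+1}$ of $\gamma$ from the cell containing $z_i$. The resulting PL curve $\overline\gamma=[z_1,\dots,z_n]$ still crosses $S$, visits each cell at most once, and has $\ell_{\rho_{S,m+2}}(\overline\gamma)\le\ell_{\rho_{S,m}}(\gamma)$. Now there are only boundedly many segments per cell, so the $O(\varepsilon\delta)$ error per segment cannot accumulate, and one compares $\ell_{\rho^*_{S,1}}(f(\gamma))$ to $\ell_{\rho_{S,5}}(\overline\gamma)$ in two steps: first replace $f(\gamma)$ by $\overline{f(\gamma)}=[f(z_1),\dots,f(z_n)]$ (costing a factor $1-O(\varepsilon)$ because $a_\square/a_{\hat\square}=1+O(\varepsilon)$ for adjacent deep blue cells, so segments are near-geodesics for $\rho^*$), then compare $\overline{f(\gamma)}$ to $\overline\gamma$ using the $C^0$ estimate (another $1-O(\varepsilon)$). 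Finally, $\ell_{\rho_{S,5}}(\overline\gamma)\ge(1-\eta)\ell(S)$ by modifications~(i) and~(iii) of Lemma~\ref{mathieu-lemma2}. Your threshold device is trying to play the role of this straightening, but it only controls errors on the retained arcs; it has no mechanism to force the retained arcs to carry the displacement.
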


\begin{proof}
For an odd integer $m \ge 1$, let $\mathscr B_m$ denote the union of deep blue cells with depth parameter $m$ from  modification (iii) of  Lemma \ref{mathieu-lemma2}.
Given a square $S \in \mathcal S_{S_1}(\varepsilon)$ and an odd integer $m$,
 define $\rho_{S,m} = \chi_{\mathscr B_m \cap S}$ and
$$
\rho^*_{S,m} = \sum_{\square \subset \mathscr B_m} \frac{1}{a_{\square}} \cdot \chi_{f({\square})}.
$$
Then, $\rho_{S,m}$ is a metric on $S$ while $\rho^*_{S,m}$ is a metric on $f(S)$.
To a curve $\gamma \in \Gamma_{\leftrightarrow}(S)$, we associate a piecewise-linear curve $\overline \gamma = [z_1, z_2, z_3, \dots, z_n] \in \Gamma_{\leftrightarrow}(S)$ which has the same endpoints as $\gamma$ by straightening $\gamma$ in all cells $\square \subset S_1$.  To be precise, let us parameterize $\gamma: [0,1] \to \mathbb{C}$ in some way, e.g.~from left to right.
 Set $z_1 = \gamma(0)$.
We
examine the cell $\square_1$ that contains $z_1$, and replace the arc between $z_1$ and the last exit point of $\gamma$ in $\square_1$ by a line segment $[z_1,z_2]$. We continue replacing arcs of $\gamma$ by line segments, in the order of cells visited by $\gamma$, until we reach $z_n = \gamma(1)$.
 The astute reader may notice that if $\gamma$ exits a cell in a corner, then the next cell visited by $\gamma$ may not be uniquely defined -- in this case, we choose it arbitrarily. It is easy to see that the length of $\gamma$ can only decrease after straightening: $\ell_{\rho_{S,m+2}}(\overline{\gamma}) \le \ell_{\rho_{S, m}}(\gamma)$.

Since $f(\square) \subset (1+2\varepsilon)L(\square)$,
$
A(\rho^*_{S,1}) \le  (1+2\varepsilon)^2 \ell(S)^2  \le  (1+5\varepsilon) \ell(S)^2.
$
Inspection shows that for any curve $f(\gamma) \in \Gamma_{\leftrightarrow}(f(S))$, we have
\begin{equation}
\label{eq:errors-in-straightening}
\ell_{\rho^*_{S,1}} \bigl (f(\gamma) \bigr ) \, \ge \, (1-C_1\varepsilon) \cdot \ell_{ \rho^*_{S,3}} \bigl (\overline{f(\gamma)} \bigr ) \, \ge \, (1-C_2\varepsilon) \cdot \ell_{\rho_{S,5}}(\overline{\gamma}),
\end{equation}
where $\overline{f(\gamma)} = [f(z_1), f(z_2), \dots, f(z_n)]$. The first error in (\ref{eq:errors-in-straightening}) is due to the fact that  $a_{ \square} / a_{\hat \square} \le 1 + 2\varepsilon$ for two adjacent deep blue cells $\square$ and $\hat \square$, which makes line segments approximate geodesics rather than
genuine geodesics, while the second error  in (\ref{eq:errors-in-straightening}) is due to the discrepancy between $f$ and $L$.

By modifications (i) and (iii) of Lemma \ref{mathieu-lemma2}, (\ref{eq:errors-in-straightening}) implies that when $N > N_0$ is large and $r < r_0$ is small, with probability $\ge 1-\varepsilon$,
$$\ell_{\rho_{S,1}^*}(f(\gamma)) \ge (1- C_3\varepsilon) \ell(S)$$
for any curve $f(\gamma) \in \Gamma_{\leftrightarrow}(f(S))$ and any $S \in \mathcal S_{S_1}(\varepsilon)$. In other words, $\Mod f(S) \le 1 + C \varepsilon$ for all $S \in \mathcal S_{S_1}(\varepsilon)$ where
$C > 0$ is a universal constant.
\end{proof}

\section{Homogenization of quasiconformal maps}
\label{sec:homogenization-qc}

In this section, we prove a variant of Theorem \ref{main-thm} where the dilatation is randomized on a bounded open set $\Omega \subset \mathbb{C}$.
\begin{theorem}
\label{main-thm2}
Let  $\Omega \subset B(0,R)$ be a bounded open set in the plane whose boundary has zero measure. Consider the square grid in the plane of mesh size $\delta > 0$. For each square of side length $\delta$ compactly contained in $\Omega$, select $\mu$ according to the measure $\lambda$.
Outside of the $\delta$-approximation of $\Omega$, set $\mu = \mu_0$, where $\mu_0$ is a fixed Beltrami coefficient on the plane with $\|\mu_0\|_\infty < 1$.  Then,
$$
\mathbb{P} \biggl [ \, \sup_{z \in B(0,R)} \Bigl |  w^{\mu}(z) -  w^{\mu_{\lambda} \cdot \chi_\Omega + \mu_0 \cdot \chi_{\mathbb{C} \setminus \Omega}}(z)  \Bigr | < \varepsilon \, \biggr ] \to 1, \qquad \text{as }\delta \to 0,
$$
where $\mu_{\lambda}$ is a constant that depends only on $\lambda$.
\end{theorem}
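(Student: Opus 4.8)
The plan is to prove that the random maps $w^{\mu}$ built with mesh $\delta$ converge, in probability in $C(B(0,R))$, to the deterministic map $g:=w^{\mu_\lambda\chi_\Omega+\mu_0\chi_{\mathbb{C}\setminus\Omega}}$, with the constant $\mu_\lambda$ emerging from an extremal-length computation that visibly depends only on $\lambda$. The strategy has three parts: (i) with high probability $w^\mu$ is roughly quasiconformal, so its laws are tight; (ii) every subsequential limit has a constant Beltrami coefficient on $\Omega$, equal to a $\lambda$-dependent $\mu_\lambda$, and coefficient $\mu_0$ off $\Omega$, hence equals $g$; (iii) this passes from a sequence of good scales to all small scales.

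For (i), fix $t=t(\lambda)<1$ so close to $1$ that $\lambda(\{|z|>t\})$ lies below the percolation threshold $r<r_0$, colour a cell \emph{blue} when the coefficient assigned to it has modulus $\le t$ (and also colour blue every cell outside the $\delta$-approximation of $\Omega$, where $\mu=\mu_0$), and \emph{yellow} otherwise. On the blue set $w^\mu$ is locally $K_0$-quasiconformal with $K_0=\max\{(1+t)/(1-t),\,\|K_{\mu_0}\|_\infty\}$. Running Lemma~\ref{mathieu-lemma2} and its modifications together with the extremal-metric estimates of Lemma~\ref{blue-conformality} --- now losing a factor $K_0$ on the blue cells, since there $w^\mu$ is merely $K_0$-quasiconformal rather than conformal --- yields, for every $\varepsilon_0>0$, constants $r,\delta_0$ so that for $\delta<\delta_0$ the map $w^\mu$ is $(K,\varepsilon_0)$-roughly quasiconformal on $B(0,R)$ with $K=K(\lambda)$, with probability $\ge 1-\varepsilon_0$. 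On this event Lemma~\ref{compactness-criterion} makes $\{w^\mu\}$ equicontinuous on compacta with $L(K)$-quasiconformal subsequential limits; hence the laws are tight, and along any $\delta_n\to 0$ some further subsequence converges, on a common probability space, a.s.\ to an $L(K)$-quasiconformal homeomorphism $f$ fixing $0,1,\infty$.

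Part (ii) is the heart. Off $\overline\Omega$ the coefficient of $w^\mu$ equals $\mu_0$ except on the discrepancy between $\Omega$ and its $\delta$-approximation, a set of measure $\to 0$ since $\partial\Omega$ is null; because quasiconformal maps sharing a coefficient on an open set differ there by a conformal map, $f\circ(w^{\mu_0})^{-1}$ is conformal on $w^{\mu_0}(V)$ for each $V$ compactly contained in $\mathbb{C}\setminus\overline\Omega$, so $f$ has coefficient $\mu_0$ a.e.\ off $\Omega$. To get a \emph{constant} coefficient on $\Omega$ I would run the self-improvement scheme of the introduction. The crucial observation is that $\Mod w^\mu(S)$ --- the modulus of the conformal rectangle $w^\mu(S)$ with sides marked by the images of the sides of a square $S$ --- depends only on the configuration of Beltrami coefficients inside $S$, since $w^\mu|_S$ is a conformal map precomposed with a quasiconformal map of coefficient $\mu|_S$ and modulus is conformally invariant. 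Hence, subdividing a square $S\subset\subset\Omega$ of orientation $\theta$ into $n^2$ congruent subsquares that are unions of cells, the moduli $\Mod w^\mu(S_{ij})$ are i.i.d. Chaining the serial and parallel (Gr\"otzsch) inequalities over the columns and rows of such a subdivision shows that, if $\Mod w^\mu(S)$ is within $\varepsilon$ of the essential supremum $M_\delta(\theta)$ of $\Mod w^\mu$ over squares of orientation $\theta$ in $\Omega$, then all but a vanishing fraction of the $\Mod w^\mu(S_{ij})$ lie within $O(\varepsilon)$ of $M_\delta(\theta)$; since $\{\Mod w^\mu(S)\ge M_\delta(\theta)-\varepsilon\}$ has positive probability, linearity of expectation together with the i.i.d.\ property forces $\mathbb{P}(\Mod w^\mu(S)\ge M_\delta(\theta)-\varepsilon)$ to be at least a positive constant for a \emph{single} small square $S$, and iterating the subdivision once more, from a larger square, upgrades this to probability $\to 1$. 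Passing to the limit, $\theta\mapsto\lim_\delta M_\delta(\theta)$ and its conjugate coincide with the modulus-distortion profile of one affine map of constant coefficient $\mu_\lambda=\mu_\lambda(\lambda)$; by Lemma~\ref{compactness-criterion} and the a.e.-differentiability characterization of quasiconformality recalled in Section~\ref{sec:roughly-qc}, $f$ has coefficient $\mu_\lambda$ a.e.\ on $\Omega$. Thus $f=g$ regardless of the subsequence, giving convergence in probability along a sequence of good scales $\delta_k\to 0$.

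For (iii), given a small $\delta$, take a good scale $\delta_k$ with $\delta\ll\delta_k$, group the mesh-$\delta$ cells into mesh-$\delta_k$ blocks, and call a block \emph{blue} when, after rescaling, the restriction of $w^\mu$ to it is, by the homogenization already established, $\varepsilon$-close to a complex-linear map; this occurs with probability $\ge 1-r$, independently for well-separated blocks, so modification~(iv) of Lemma~\ref{mathieu-lemma2} applies. Then $g^{-1}\circ w^\mu$ is $\varepsilon$-close to a complex-linear map on every blue block and conformal off the null discrepancy between $\Omega$ and its $\delta$-approximation, so Lemmas~\ref{blue-conformality} and~\ref{blue-conformality2} show that $g^{-1}\circ w^\mu$ distorts the moduli of all squares in $\mathcal S(\varepsilon)$ and of their conjugate families by at most $1+O(\varepsilon)$; letting $\varepsilon\to 0$, Lemma~\ref{compactness-criterion} and the differentiability characterization force every subsequential limit of $g^{-1}\circ w^\mu$ to be $1$-quasiconformal, hence conformal, and since it fixes $0,1,\infty$ it is the identity. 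Therefore $w^\mu\to g$ uniformly on $B(0,R)$ in probability as $\delta\to 0$, which is the assertion. The main obstacle is part (ii): making the Gr\"otzsch bookkeeping sharp enough that near-maximal modulus of a big square genuinely forces near-maximal moduli of its independent subsquares, and then extracting a single affine map --- the constant $\mu_\lambda$ --- from the resulting modulus profile; a secondary delicate point is the transfer, in part (iii), of homogenization from a good scale to the individual blocks.
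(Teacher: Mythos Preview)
Your three–part strategy is exactly the paper's: rough quasiconformality via percolation, an extremal–direction argument to pin down $\mu_\lambda$, and a second percolation run to pass from a sequence of good scales to all scales. The one genuine methodological difference is in the deficiency step of part~(ii). The paper proves a deficiency lemma (Lemma~\ref{deficiency-lemma}) by a compactness argument with the explicit metric $\rho^*(w)=|\partial_x\varphi(\varphi^{-1}(w))|^{-1}$; you instead chain the Gr\"otzsch series/parallel inequalities over the $n\times n$ subdivision. Your route is legitimate and in fact more quantitative: from $\Mod f(C_j)\ge\sum_i\Mod f(S_{ij})$ and $1/\Mod f(S)\ge\sum_j 1/\Mod f(C_j)$ one gets directly that if all $\Mod f(S_{ij})\le M$ and a fraction $c$ of them are $\le M-\varepsilon$, then $\Mod f(S)\le M-c\varepsilon$. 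This replaces the paper's soft compactness with an explicit inequality.

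There are, however, two places where your write-up does not yet close. First, in~(ii), the upgrade from positive to high probability is misargued. ``Linearity of expectation together with the i.i.d.\ property'' only recovers a \emph{lower bound} of order $p$ on $\mathbb{P}(\Mod w^\mu(S_{ij})\ge K^*-\varepsilon)$, never probability $\to 1$; and ``iterating from a larger square'' does not help, since Gr\"otzsch gives upper, not lower, bounds on $\Mod f(S)$ in terms of the subsquares. The correct mechanism is the paper's contrapositive: assume $\mathbb{P}(\Mod w^\mu(S)<K_0)\ge c_0$ for all small $\delta$, use the law of large numbers on well-separated subsquares to force a positive fraction with $\Mod\le K_0$, and combine with your Gr\"otzsch inequality (on the high-probability event $\{\text{all }\Mod\le K^*+\eta\}$) to contradict $\limsup\mathbb{P}(\Mod w^\mu(S)>K^*-\varepsilon)>0$. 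Note also that your ``essential supremum $M_\delta(\theta)$'' is infinite at each fixed $\delta$; the right object is the paper's $K^*(\theta)$, defined through a $\limsup$ in $\delta$. Finally, rotated subsquares are not unions of grid cells, so independence must be obtained by spacing (discarding $\sim 3n^2/4$ of them, as the paper does), not by the ``unions of cells'' claim.

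Second, in~(iii) your block size is off. Grouping mesh-$\delta$ cells into blocks of side $\delta_k$ and invoking ``the homogenization already established'' would require the rescaled mesh $\delta/\delta_k$ to itself be a good scale, which you have no control over. The paper's fix is to choose the block side $\beta=\eta\cdot(\delta/\delta')\cdot\ell(S_{\theta^*})$ so that the \emph{adapted mesh} for the test square $\eta^{-1}\square$ is exactly $\delta$; then Lemma~\ref{extremal-direction2} applies at the given $\delta$ for every block, with no further scale-matching needed.
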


By the results of Section \ref{sec:percolation}, random quasiconformal mappings are roughly quasiconformal:

\begin{lemma}
\label{random-maps-are-roughly-qc}
For any $\varepsilon > 0$, when the mesh size $\delta < \delta_0(\varepsilon)$ is sufficiently small, the probability that
$ w^{\mu}$ is $(K, \varepsilon)$ roughly quasiconformal on $B(0,2R)$ is at least $1-\varepsilon$.
\end{lemma}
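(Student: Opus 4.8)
The plan is to exploit the fact that, although $w^\mu$ is not uniformly quasiconformal, the cells on which $|\mu|$ is close to $1$ form a sparse random set which, by the percolation estimate of Lemma~\ref{mathieu-lemma2}, cannot distort the modulus of a macroscopic rectangle by much. Fix once and for all a radius $\rho_0 \in (\|\mu_0\|_\infty, 1)$ close enough to $1$ that $r := \lambda(\{z : |z| \ge \rho_0\})$ is less than the percolation threshold $r_0$ of Lemma~\ref{mathieu-lemma2}; this is possible because $\lambda$ is a probability measure on the \emph{open} disk. Colour a cell of the grid \emph{yellow} if it lies in $\Omega$ and the value of $\mu$ assigned to it has modulus $\ge \rho_0$, and \emph{blue} otherwise; the colours of distinct cells are independent and each cell is yellow with probability at most $r < r_0$. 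Let $\mathscr B$ be the union of the blue cells. On each blue cell $\mu$ is a constant of modulus $<\rho_0$, so the pointwise dilatation $D_{w^\mu}(z) = \tfrac{1+|\mu(z)|}{1-|\mu(z)|}$ of $w^\mu$ is at most $K_0 := \tfrac{1+\rho_0}{1-\rho_0}$ for almost every $z \in \mathscr B$, even though $D_{w^\mu}$ is unbounded on the yellow cells.

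First I would apply Lemma~\ref{mathieu-lemma2} with $N = \lceil 2R/\delta\rceil$: with probability at least $1 - 1/N^2$, every rectifiable curve contained in $B(0,2R)$ that joins two points $p, q$ with $|p-q| \ge \varepsilon$ spends length at least $\tfrac12 |p-q|$ inside the blue cells. Provided $\delta$ is small enough (depending on $\varepsilon$ and $R$) this probability exceeds $1-\varepsilon$, and we work on this event for the remainder of the argument. Let $R \in \mathcal R_{B(0,2R)}(\varepsilon)$, and write $\ell_1$ for the common length of the two marked sides and $\ell_2 \ge \varepsilon$ for the distance between them, so that $\Mod R = \ell_1/\ell_2$. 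Since every curve in $\Gamma_{\leftrightarrow}(R)$ joins the two marked sides, hence joins two points at distance $\ge \ell_2 \ge \varepsilon$, the metric $\rho_R := \tfrac{2}{\ell_2}\chi_{\mathscr B\cap R}$ is admissible for $\Gamma_{\leftrightarrow}(R)$, and $A(\rho_R) \le \tfrac{4}{\ell_2^2}\,\ell_1\ell_2 = 4\,\Mod R$.

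Next I would push this metric forward through $w^\mu$. Since $w^\mu$ is a quasiconformal, hence Sobolev, homeomorphism of $B(0,2R)$, a standard change of variables shows that
\[
\rho_R^*(w) \;=\; \rho_R\bigl((w^\mu)^{-1}(w)\bigr)\cdot\bigl|D(w^\mu)^{-1}(w)\bigr|,
\]
where $|D(w^\mu)^{-1}(w)|$ is the operator norm of the differential of the inverse map, defines a metric on $w^\mu(R)$ that is admissible for $\Gamma_{\leftrightarrow}(w^\mu(R)) = w^\mu\bigl(\Gamma_{\leftrightarrow}(R)\bigr)$, with area $A(\rho_R^*) = \int_{\mathscr B\cap R} \rho_R(z)^2\, D_{w^\mu}(z)\,|dz|^2$. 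The crucial point is that $\rho_R$ is supported on $\mathscr B$, where $D_{w^\mu} \le K_0$, so $A(\rho_R^*) \le K_0\,A(\rho_R)$ regardless of how badly $w^\mu$ is distorted on the yellow cells. Hence $\Mod w^\mu(R) = \Mod \Gamma_{\leftrightarrow}(w^\mu(R)) \le K_0\,A(\rho_R) \le 4K_0\,\Mod R$.

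The matching lower bound then comes for free from duality. The rectangle $R^{\mathsf T}$ obtained from $R$ by interchanging the two pairs of opposite sides again lies in $\mathcal R_{B(0,2R)}(\varepsilon)$, so the bound just established gives $\Mod w^\mu(R^{\mathsf T}) \le 4K_0\,\Mod R^{\mathsf T}$; combining this with \eqref{eq:product-of-moduli}, applied both to $R$ and to $w^\mu(R)$, yields $\Mod w^\mu(R) = 1/\Mod w^\mu(R^{\mathsf T}) \ge (4K_0\,\Mod R^{\mathsf T})^{-1} = (4K_0)^{-1}\Mod R$. Thus on the event above $w^\mu$ is $(K,\varepsilon)$ roughly quasiconformal on $B(0,2R)$ with $K = 4K_0$, a constant depending only on $\lambda$ and $\mu_0$, and this event has probability at least $1-\varepsilon$ once $\delta$ is sufficiently small. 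The step demanding the most care is the push-forward estimate — specifically, the observation that only the dilatation of $w^\mu$ on the support of $\rho_R$, i.e.\ on the blue cells, enters the area bound; by contrast, the uniformity of the percolation estimate over the uncountable family $\mathcal R_{B(0,2R)}(\varepsilon)$ is automatic, since Lemma~\ref{mathieu-lemma2} controls all pairs of points simultaneously.
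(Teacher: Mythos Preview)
Your proof is correct and follows essentially the same approach as the paper: choose a threshold $\rho_0$ (the paper's $k_1$) making the yellow cells subcritical, invoke Lemma~\ref{mathieu-lemma2} so that every macroscopic crossing has at least half its length in the blue region, and test modulus with the metric $\chi_{\mathscr B\cap R}$ pushed through $w^\mu$. The only cosmetic difference is that the paper pushes forward by $[\Jac (w^\mu)^{-1}]^{1/2}$ (area-preserving, losing a factor $\sqrt{K_1}$ in length), while you push forward by the operator norm $|D(w^\mu)^{-1}|$ (length-preserving, losing a factor $K_0$ in area); both yield the identical bound $\Mod w^\mu(R)\le 4K_0\,\Mod R$, and the paper, like you, gets the lower bound by swapping the pairs of sides.
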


  \begin{proof}
  Choose $0 < k_1 < 1$ so that
  $$
  \| \mu_0 \|_\infty \le k_1, \qquad
   \lambda \bigl (\{z : k_1 < |z| < 1\} \bigr )< r_0,
  $$
   where $r_0$ is the constant from Lemma \ref{mathieu-lemma2}. Colour a cell $\square$ in $\delta \mathbb{Z}^2$ {\em yellow} if $|\mu(\square)| >k_1$ and {\em blue} otherwise. Let $\mathscr B$ denote the union of the blue squares.  We will show the lemma holds with $K = 4K_1$ where  $K_1 = \frac{1+k_1}{1-k_1}$.

Recall that $\mathcal R_{B(0,2R)}(\varepsilon)$ denotes the collection of rectangles  compactly contained in $B(0,2R)$ whose sides have length at least $\varepsilon$.
 To a rectangle $R \in \mathcal R_{B(0,2R)}(\varepsilon)$, associate the metric $\rho_R = \chi_{\mathscr B \cap R}$. By Lemma \ref{mathieu-lemma2}, when the mesh size $\delta < \delta_0(\varepsilon)$ is sufficiently small, with probability at least $1-\varepsilon$, $\ell_{\rho_R}(\gamma) \ge \ell_2(R)/2$ for any curve $\gamma \in \Gamma_{\leftrightarrow}(R)$ and any square $R \in \mathcal R_{B(0,2R)}(\varepsilon)$.

 To estimate $\Mod w^{\mu}(R)$, we use the metric
  $$\rho_R^*(w) =  \bigl [ \Jac (w^{\mu})^{-1}(w) \bigr ]^{1/2} \cdot \chi_{w^{\mu}(\mathscr B \cap R)}(w), \qquad w \in w^{\mu}(R).
  $$
 By construction, $A(\rho_R^*) = A(\rho_R) \le \ell_1(R) \ell_2(R)$. Since $w^{\mu}$ is $K_1$-quasiconformal on the support of $\rho_R$, $$\ell_{\rho_R^*}(w^{\mu}(\gamma)) \ge (1/\sqrt{K_1}) \cdot \ell_{\rho_R}(\gamma), \qquad \gamma \in \Gamma_{\leftrightarrow}(R).$$
It follows that $\ell_{\rho_R^*}(w^{\mu}(\gamma)) \ge \ell_2(R)/(2\sqrt{K_1})$  and $\Mod w^{\mu}(R) \le 4K_1 \cdot \Mod R$. The proof is complete.
  \end{proof}

We say that two squares $S_1, S_2$ have the same {\em orientation} if $S_2 = a S_1 + b$ where $a > 0$ and $b \in \mathbb{C}$.
To motivate our proof of Theorem \ref{main-thm2}, note that a complex-linear mapping of the plane preserves moduli of rectangles, while an affine mapping $A$ that is not complex-linear has an extremal direction: the modulus of any geometric rectangle oriented in this direction is stretched by $K(A)$, while the moduli of any other rectangle is stretched by a strictly  smaller amount.
 More generally,  a quasiconformal mapping $\varphi$ has constant dilatation $\mu_\varphi = \mu_A$ on $\Omega$ if and only if $\varphi$ stretches moduli of rectangles pointing in the $A$ direction by  $K(A)$.
In order to show that a random quasiconformal mapping behaves like $A$ on $\Omega$, we need a mechanism for identifying this direction.

For a square $S \subset \Omega$, look at the probability that $\Mod  w^{\mu}(S) > K$ and take $\limsup$ as $\delta \to 0$. Let $K^*(S)$ be the infimum of $K > 0$ for which this $\limsup$ is 0. Thus $K^*(S)$ measures the maximal effective distortion of $S$.  Maximizing $K^*(S)$ over all squares $S \subset \Omega$, we obtain the constant $K^*$ which measures the maximal effective distortion of the model. Since the product of the moduli of the ``horizontal'' and ``vertical'' families of a conformal rectangle is 1, $K^* \ge 1$.
Since with high probability, a random quasiconformal mapping is roughly quasiconformal, $K^*$ is finite.

\begin{lemma}
{\em (i)} $K^*(S)$ depends only on the orientation of $S$ and not its side length or location within $\Omega$, or on the domain $\Omega \subset \mathbb{C}$.

{\em (ii)} For a square $S$, let $S_\theta$ denotes the square obtained by rotating $S$ by $e^{i\theta}$ around its center.
Let $S$ be a square for which all $S_{\theta}$, $\theta \in [0,2\pi]$ are in $\Omega$.
The function $\theta \to K^*(S_\theta)$ is continuous.

{\em (iii)} There exists $\theta^* \in [0,2\pi]$ such that for any $\varepsilon > 0$, there exists a constant $c(\varepsilon) > 0$ and a sequence of scales
$\delta_{\varepsilon, j} \to 0$ for which
\begin{equation}
\label{eq:special-scale}
\mathbb{P}_{\delta_{\varepsilon, j}} \bigl ( \Mod  w^{\mu}(S_{\theta^*}) > K^* - \varepsilon \bigr ) > c(\varepsilon).
\end{equation}
\end{lemma}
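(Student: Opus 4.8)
Here is a plan for proving the three-part lemma. Parts (i) and (ii) contain all the real content; part (iii) then follows by unwinding the definition of $K^*$.

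\textbf{Part (i).} The cornerstone is the observation that $\Mod w^\mu(S)$ depends only on the restriction $\mu|_{\overline S}$. Indeed, if $\mu_1=\mu_2$ a.e.\ on $\overline S$, then the map $h=w^{\mu_2}\circ(w^{\mu_1})^{-1}$ has vanishing Beltrami coefficient on $w^{\mu_1}(S)$ — by the chain rule for Beltrami coefficients, a composition $g\circ f^{-1}$ is conformal at $f(z)$ exactly when $f$ and $g$ have the same complex dilatation at $z$ — so $h$ is a conformal homeomorphism of the quadrilateral $w^{\mu_1}(S)$ onto the quadrilateral $w^{\mu_2}(S)$ carrying vertices to vertices, whence $\Mod w^{\mu_2}(S)=\Mod w^{\mu_1}(S)$. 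Consequently, for $S\Subset\Omega$ and $\delta$ small enough that every cell of $\delta\mathbb{Z}^2$ meeting $\overline S$ is compactly contained in $\Omega$, the law of $\Mod w^\mu(S)$ is the pushforward of $\lambda^{\otimes(\text{cells meeting }\overline S)}$ under a fixed function, hence independent of $\Omega$ and of the background coefficient $\mu_0$; so is $K^*(S)$. For location- and scale-invariance, take two squares of the same orientation, $S'=aS+b$ with $a>0$. Pulling back by the similarity $z\mapsto(z-b)/a$ (which is conformal and so preserves moduli of quadrilaterals) shows that $\Mod w^\mu(S')$ in the model at mesh $\delta$ has the same distribution as $\Mod w^\mu(S)$ in the model at mesh $\delta/a$ run on a translate of the grid $\tfrac{\delta}{a}\mathbb{Z}^2$; since $\limsup_{\delta\to 0}$ of a quantity is unchanged if $\delta$ is replaced by $\delta/a$, only a grid translation by a vector of length $O(\delta)$ remains to be absorbed. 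For that I use Lemma \ref{random-maps-are-roughly-qc} (so that off an event of probability $\le\varepsilon_0$ the map $w^\mu$ is $(K_0,\varepsilon_0)$-roughly quasiconformal near $S$) together with the stability of $\Mod w^\mu(S)$ under Hausdorff perturbations of $S$ that are small compared to $\ell(S)$, valid for roughly quasiconformal maps. Comparing the two models on this good event, then letting $\delta\to 0$ and afterwards $\varepsilon_0\to 0$, gives $K^*(S')\le K^*(S)+\eta$ for every $\eta>0$, hence equality by symmetry.

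\textbf{Part (ii).} Since $d(S_\theta,S_{\theta'})\le C\,\ell(S)\,|\theta-\theta'|$ with $C$ absolute, the squares $S_\theta$ and $S_{\theta'}$ differ by a Hausdorff perturbation that is small relative to $\ell(S)$ once $|\theta-\theta'|$ is small, uniformly in $\delta$. Running the same template as in part (i) with $S_\theta,S_{\theta'}$ in place of the two translates of $S$ — work on the event that $w^\mu$ is $(K_0,\varepsilon_0)$-roughly quasiconformal, use stability of moduli to compare $\Mod w^\mu(S_\theta)$ with $\Mod w^\mu(S_{\theta'})$, take $\limsup_{\delta\to 0}$ and then let $\varepsilon_0\to 0$ — yields $|K^*(S_\theta)-K^*(S_{\theta'})|\le\eta$ whenever $|\theta-\theta'|$ is below a threshold depending only on $\eta$, $K_0$ and $\ell(S)$; the threshold is uniform in $\theta$ because $K^*(\cdot)\le K_0$ everywhere by rough quasiconformality. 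Hence $\theta\mapsto K^*(S_\theta)$ is (uniformly) continuous.

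\textbf{Part (iii).} By part (i), $K^*(S)$ is a function of the orientation of $S$ alone, so it descends to a function $K^*(\theta)$ on the circle of orientations; every orientation is realized by some square compactly contained in the open set $\Omega$, so $K^*=\sup_{S\subset\Omega}K^*(S)=\sup_\theta K^*(\theta)$. Fix a square $S$ all of whose rotations $S_\theta$, $\theta\in[0,2\pi]$, lie in $\Omega$ (one exists since $\Omega$ contains a ball); by part (ii) the function $\theta\mapsto K^*(S_\theta)=K^*(\theta)$ is continuous on the compact circle, so its maximum is attained at some $\theta^*$, and with $S_{\theta^*}$ the corresponding rotation we have $K^*(S_{\theta^*})=K^*$. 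Now fix $\varepsilon>0$; we may assume $\varepsilon<K^*$, since if $\varepsilon\ge K^*$ then $K^*-\varepsilon\le 0$ and $\Mod w^\mu(S_{\theta^*})>K^*-\varepsilon$ has probability $1$, so (\ref{eq:special-scale}) holds with $c(\varepsilon)=\tfrac12$. As $K^*-\varepsilon<K^*(S_{\theta^*})$, the number $K^*-\varepsilon$ does not lie in the set of $K$ for which $\limsup_{\delta\to 0}\mathbb{P}_\delta(\Mod w^\mu(S_{\theta^*})>K)=0$, over which $K^*(S_{\theta^*})$ is by definition the infimum; therefore
\[
L_\varepsilon:=\limsup_{\delta\to 0}\mathbb{P}_\delta\bigl(\Mod w^\mu(S_{\theta^*})>K^*-\varepsilon\bigr)>0.
\]
Taking $c(\varepsilon)=L_\varepsilon/2$, the definition of $\limsup$ produces a sequence $\delta_{\varepsilon,j}\to 0$ along which $\mathbb{P}_{\delta_{\varepsilon,j}}(\Mod w^\mu(S_{\theta^*})>K^*-\varepsilon)>c(\varepsilon)$, which is (\ref{eq:special-scale}).

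\textbf{Main obstacle.} The lemma is ``soft'' in that the hard analysis sits in the results it invokes; the one point needing care is the order of limits in parts (i) and (ii). One must fix the roughly-quasiconformal scale $\varepsilon_0$ and distortion $K_0$ \emph{first}, then send the mesh $\delta\to 0$ so that an $O(\delta)$ displacement of a fixed square becomes negligible and the event ``roughly quasiconformal but moduli off by more than $\eta$'' becomes impossible, and only afterwards let $\varepsilon_0\to 0$. This requires a version of Lemma \ref{continuity-of-moduli} valid for roughly quasiconformal maps at scales above $\varepsilon_0$, which is available from the quasisymmetry and doubling estimates established in the proof of Lemma \ref{compactness-criterion}. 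Everything else is bookkeeping with the definitions.
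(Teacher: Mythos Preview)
Your proof is correct and follows essentially the same approach as the paper's sketch: compare $\Mod w^\mu_\delta(S)$ with $\Mod w^\mu_{\delta'}(S')$ via a similarity, reduce the residual grid misalignment to an $O(\delta)$ Hausdorff perturbation of the square, and absorb that perturbation using rough quasiconformality together with continuity of moduli (your locality observation via the chain rule for Beltrami coefficients is the clean way to get independence from $\Omega$ and $\mu_0$). Your derivation of (iii) from (i)--(ii) by maximizing the continuous function $\theta\mapsto K^*(\theta)$ on the compact circle and then unwinding the definition of $K^*(S_{\theta^*})$ is exactly what the paper's ``same circle of ideas'' is pointing to, and your identification of the one genuine technicality---that Lemma~\ref{continuity-of-moduli} must be used in its roughly-quasiconformal form, supplied by the doubling/quasisymmetry estimates in the proof of Lemma~\ref{compactness-criterion}---matches the paper's appeal to ``rough quasiconformality and Lemma~\ref{continuity-of-moduli}.''
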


\begin{proof}[Sketch of proof.] If $S, S' \subset \Omega$ are two squares with the same orientation, then the distribution of the random variable $\Mod  w_{\delta}^{\mu}(S)$ is essentially the same as that of
$\Mod  w_{\delta'}^{\mu}(S')$ with $\delta' = \delta \cdot \ell(S')/\ell(S)$, i.e.~
$$
\Delta(t) \, = \,  \mathbb{P} \bigl (\Mod  w_{\delta}^{\mu}(S) < t \bigr ) -  \mathbb{P} \bigl (\Mod  w_{\delta'}^{\mu}(S') < t \bigr ),
$$
tends weakly to 0 as $\delta \to 0$. The reason that $\Delta(t)$ could be non-zero comes from the slight discrepancy of how the $\delta$ and $\delta'$ grids intersect $S$ and $S'$, however, by rough quasiconformality and Lemma \ref{continuity-of-moduli}, this discrepancy is essentially negligible if the grids are very fine.
This proves (i). The same circle of ideas also show (ii) and (iii).
\end{proof}

Let $A_{\lambda}$ be the affine transformation with dilatation $K^*$ which stretches all squares pointing in the $S_{\theta^*}$ direction by $K^*$ and fixes the points $0,1$. We denote the dilatation of $A_{\lambda}$ by $\mu_{\lambda}$. The task before us is now clear: we want to show that $w^{\mu}$ is close to the normalized quasiconformal map $\Phi$ with  dilatation $\mu_\lambda$ on $\Omega$ and $\mu_0$ on $\mathbb{C} \setminus \Omega$
in $C \bigl (B(0,R) \bigr )$. Note that $\Phi$ is uniquely determined since $\partial \Omega$ has Lebesgue measure 0. Alternatively, we can show that
$f := A_\lambda^{-1} \circ w^{\mu}$ is close to $A_\lambda^{-1} \circ \Phi$. We do this in a series of incremental improvements. The first step is to promote positive probability to high probability:

 \begin{lemma}
 \label{extremal-direction1}
There is a sequence of scales $\delta_j \to 0$ such that  \begin{equation}
 \label{eq:good-scale}
 \mathbb{P}_{\delta_j} \bigl ( \Mod  w^{\mu}(S_{\theta^*}) > K^* - 1/j \bigr ) > 1 - 1/j.
 \end{equation}
\end{lemma}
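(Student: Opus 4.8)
The plan is to promote the positive‑probability bound (\ref{eq:special-scale}) to a high‑probability one by subdividing the extremal square $S_{\theta^*}$ into a fine grid of congruent subsquares and playing two facts against each other: the moduli of the images of the subsquares are \emph{independent} (because the subsquares are disjoint), while the near‑extremality of the big square forces almost all of them to be near‑extremal as well.

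\textbf{The geometric input.} The deterministic ingredient is the superadditivity of the modulus under grid subdivision: if $\phi$ is any orientation‑preserving homeomorphism and a square $S$ is tiled by $n^2$ congruent subsquares $S_{i,j}$, $1\le i,j\le n$, each carrying the marking inherited from $S$, then
\begin{equation}
\label{eq:superadd}
\Mod \phi(S) \, \le \, \frac{1}{n^2}\sum_{i,j = 1}^n \Mod \phi(S_{i,j}),
\end{equation}
with equality when $\phi$ is affine. One way to see this is to transport the extremal metric of $\Gamma_\leftrightarrow(\phi(S))$ to the straight rectangle and apply Rengel's inequality to each image tile; alternatively it follows from the concavity of effective conductance as a function of the local conductances. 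We pair (\ref{eq:superadd}) with the a priori upper bound coming from the definition of $K^*$: since every $S_{i,j}$ is a subsquare of $S_{\theta^*}$ with orientation $\theta^*$, part (i) of the preceding lemma together with the maximality of $K^*$ gives $K^*(S_{i,j}) = K^*$, so for each $\eta > 0$ and each fixed $n$,
$$
\limsup_{\delta \to 0} \, \mathbb{P}_\delta \bigl ( \exists\, i,j : \Mod w^\mu(S_{i,j}) > K^* + \eta \bigr ) \, = \, 0 ,
$$
the union being over the finitely many subsquares.

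\textbf{Propagation and large deviations.} Fix $\varepsilon > 0$ and a large integer $n$, and run the model at a scale $\delta = \delta_{\varepsilon,m}$ from (\ref{eq:special-scale}). By the a priori bound, if $m$ is large enough, then with probability at least $c(\varepsilon) - o(1) \ge c(\varepsilon)/2$ we have both $\Mod w^\mu(S_{\theta^*}) > K^* - \varepsilon$ and $\Mod w^\mu(S_{i,j}) \le K^* + \varepsilon$ for every $i,j$. On this event, (\ref{eq:superadd}) says that the $n^2$ numbers $\Mod w^\mu(S_{i,j})$ have average exceeding $K^* - \varepsilon$ while none exceeds $K^* + \varepsilon$, and an elementary count gives that at least $(1 - 2\sqrt\varepsilon)\,n^2$ of them exceed $K^* - \sqrt\varepsilon$. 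Now observe that $\Mod w^\mu(S_{i,j})$ depends only on $\mu$ restricted to $S_{i,j}$: conjugating by the affine map of the standard rectangle onto $S_{i,j}$ and by the conformal uniformization of the image produces a quasiconformal map between standard rectangles whose Beltrami coefficient is determined by $\mu|_{S_{i,j}}$, and the modulus of the target is then forced, since two such maps would differ by a conformal map between standard rectangles matching the markings, hence by the identity. As the $S_{i,j}$ are disjoint and the cell dilatations are i.i.d., the events $A_{i,j} = \{\Mod w^\mu(S_{i,j}) > K^* - \sqrt\varepsilon\}$ are independent and, if the $S_{i,j}$ are taken to be unions of equally many grid cells, identically distributed with some common probability $p$; the discrepancy from cells that straddle $\partial S_{i,j}$ is absorbed via rough quasiconformality and Lemma \ref{continuity-of-moduli}, exactly as in the proof of the preceding lemma. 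The previous sentence gives $\mathbb{P}\bigl ( \#\{(i,j) : A_{i,j} \text{ holds}\} \ge (1 - 2\sqrt\varepsilon)\,n^2 \bigr ) \ge c(\varepsilon)/2$, a bound independent of $n$. On the other hand $\#\{(i,j) : A_{i,j} \text{ holds}\}$ is binomial with parameters $n^2$ and $p$, so if $p \le 1 - 3\sqrt\varepsilon$, Hoeffding's inequality bounds this probability by $e^{-2\varepsilon n^2}$, which is less than $c(\varepsilon)/2$ once $n$ is large. Therefore $p > 1 - 3\sqrt\varepsilon$.

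\textbf{Conclusion.} Each $S_{i,j}$ is a translate of a $1/n$‑scaling of $S_{\theta^*}$, so by the scale‑invariance of the law of $\Mod w^\mu(\cdot)$ established in the proof of part (i) of the preceding lemma, the distribution of $\Mod w^\mu(S_{i,j})$ at mesh $\delta_{\varepsilon,m}$ differs from that of $\Mod w^\mu(S_{\theta^*})$ at mesh $n\,\delta_{\varepsilon,m}$ by an amount that tends to $0$ as $m \to \infty$; hence $\mathbb{P}_{n\delta_{\varepsilon,m}}\bigl ( \Mod w^\mu(S_{\theta^*}) > K^* - \sqrt\varepsilon \bigr ) \ge 1 - 3\sqrt\varepsilon - o(1)$. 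Given $j$, pick $\varepsilon$ with $3\sqrt\varepsilon < 1/(2j)$, then $n = n(\varepsilon)$ as above, then $m$ so large that $n\,\delta_{\varepsilon,m} < 1/j$ and the error term above is $< 1/(2j)$; the scale $\delta_j := n\,\delta_{\varepsilon,m}$ then satisfies (\ref{eq:good-scale}), and $\delta_j \to 0$. The only genuinely substantial point is the superadditivity inequality (\ref{eq:superadd}); the remaining difficulty is the routine but unavoidable bookkeeping needed to make the subsquares grid‑aligned, so that the independence in the middle step is literal, which is handled by Lemma \ref{continuity-of-moduli} as in the earlier arguments.
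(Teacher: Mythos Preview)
Your argument is correct and follows a genuinely different route from the paper's. The paper argues by contradiction: assuming (\ref{eq:good-scale}) fails gives $K_0 < K^*$ and $c_0>0$ with $\mathbb P_\delta(\Mod w^\mu(S_{\theta^*}) < K_0) \ge c_0$ for all small $\delta$; the law of large numbers on a $1/4$-subset of \emph{separated} subsquares then forces, with high probability, a definite fraction to have modulus $\le K_0$; combining this with (iii) and passing to a subsequential limit via rough quasiconformality yields a genuine $K^*$-quasiconformal map contradicting the qualitative ``deficiency lemma'' (Lemma~\ref{deficiency-lemma}), whose proof is itself a compactness argument. Your route replaces that compactness-based deficiency lemma by the sharp Rengel-type inequality (\ref{eq:superadd}) (series/parallel composition plus AM--HM), which immediately turns near-extremality of the big square plus the a~priori cap $K^*+\varepsilon$ into ``all but $O(\sqrt\varepsilon)n^2$ subsquares are $(K^*-\sqrt\varepsilon)$-extremal''; Hoeffding then forces the single-subsquare probability to be close to~$1$, and rescaling finishes. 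This is more quantitative and avoids compactness entirely; the paper's version, on the other hand, needs only the soft deficiency statement, not the exact averaging inequality.

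One point to tighten: adjacent $S_{i,j}$ genuinely share grid cells when $\theta^*$ is irrational with respect to the mesh, so the $A_{i,j}$ are \emph{not} literally independent, and you cannot simultaneously make the $S_{i,j}$ grid-aligned and keep them tiling $S_{\theta^*}$ (which (\ref{eq:superadd}) requires). The clean fix is what the paper in effect does: apply (\ref{eq:superadd}) to the full tiling to get ``most $S_{i,j}$ are near-extremal'', then pass to the $\sim n^2/4$ subsquares in a fixed residue class mod~$2$ in each coordinate (which are $\ell(S_{i,j})$-separated, hence independent once $\delta$ is small), observe by pigeonhole that most of \emph{these} are near-extremal as well, and run Hoeffding there. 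Since Hoeffding holds for independent but not identically distributed Bernoullis, you conclude that the average (hence some, hence by the weak scale-invariance of part~(i) essentially all) of the individual probabilities exceeds $1 - O(\sqrt\varepsilon)$. With this adjustment your argument is complete.
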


The proof rests on the following lemma:

\begin{lemma}
\label{deficiency-lemma}
Suppose $S$ is a square in the plane and $\varphi: S \to \mathbb{C}$ is a $K$ quasiconformal map. For an integer $n \ge 1$, divide $S = S_1 \cup S_2 \cup \dots \cup S_{n^2}$ into $n^2$ squares of equal size. If $\Mod \varphi(S_i) \le K_0$ for at least $c \cdot n^2$ of these squares, then $\Mod \varphi(S) \le K_1$ for some  constant $K_0 < K_1 < K$ which depends on $K, K_0, c$ but not on $n$.
\end{lemma}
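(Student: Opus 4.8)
The plan is to produce an admissible metric for the extremal problem defining $\Mod\varphi(S)$ whose total area falls strictly below $K$, by a gap depending only on $K,K_0$ and $c$; since $\Mod\varphi(S)$ is bounded by that area, this gives the lemma. Normalize $S=[0,1]^2$ with its distinguished pair of sides horizontal, so $\Mod\varphi(S)=\Mod\varphi\bigl(\Gamma_{\leftrightarrow}(S)\bigr)$, where $\Gamma_{\leftrightarrow}(S)$ joins the top of $S$ to its bottom. Write $\rho_{\mathrm{taut}}$ for the tautological metric on $\varphi(S)$ — the Euclidean metric pushed forward and divided pointwise by the smaller singular value of $D\varphi$; it satisfies $\ell_{\rho_{\mathrm{taut}}}\bigl(\varphi(\gamma)\bigr)\ge(\text{Euclidean length of }\gamma)$ for every curve $\gamma$, hence is admissible for $\varphi(\Gamma_{\leftrightarrow}(S))$, and its area equals $\int_S K_\varphi\le K$. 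The whole point is to replace $\rho_{\mathrm{taut}}$ by something cheaper over the good squares.

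First I would reduce to one thin rectangle. Partition $S$ into its $n$ horizontal rows $R_1,\dots,R_n$; a curve in $\Gamma_{\leftrightarrow}(S)$ crosses them in series, so $\Mod\varphi(S)\le\bigl(\sum_l 1/\Mod\varphi(R_l)\bigr)^{-1}$, each $R_l$ carrying its inherited marking. A counting argument shows that if at least $cn^2$ of the $n^2$ squares are good, then at least $\tfrac c2 n$ of the rows — call them \emph{nice} — contain at least $\tfrac c2 n$ good squares apiece. So it is enough to show that for a nice row $\Mod\varphi(R_l)\le K'n$ for some $K'=K'(K,K_0,c)<K$, improving on the trivial $\Mod\varphi(R_l)\le K\cdot\Mod R_l=Kn$. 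Plugging $\Mod\varphi(R_l)\le K'n$ on the nice rows and $\le Kn$ elsewhere into the series estimate then yields $\Mod\varphi(S)\le K_1$ with $K_1=K_1(K,K_0,c)\in(K_0,K)$ and independent of $n$.

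For a nice row I would build a metric on $\varphi(R_l)$ equal to $\rho_{\mathrm{taut}}$ on the images of the non-good sub-squares, equal to the extremal metric of $\varphi\bigl(\Gamma_{\leftrightarrow}(S_{l,k})\bigr)$ on the image of each good sub-square $S_{l,k}$, and with a small multiple $\theta\,\rho_{\mathrm{taut}}$ added globally. Since $\Mod\varphi(S_{l,k})\le K_0$ on a good square, its extremal piece has area $\le K_0$ in the natural normalization, against $\le K$ for $\rho_{\mathrm{taut}}$ on that square; summing over the $\ge\tfrac c2 n$ good sub-squares and the $\le n$ others, and then adding the $\theta\,\rho_{\mathrm{taut}}$ contribution, the total area is at most $\bigl((1-\tfrac c2)K+\tfrac c2 K_0\bigr)\Mod R_l$ plus an error of size $O(\theta)\cdot\Mod R_l$, hence below $K\,\Mod R_l$ once $\theta=\theta(K,K_0,c)$ is small. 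This produces the required $K'$.

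The main obstacle is to check that this metric is admissible for $\varphi(\Gamma_{\leftrightarrow}(R_l))$: a curve $\gamma$ crossing $R_l$ from top to bottom must have $\rho$-length at least $1$. On a non-good sub-square — or on a good sub-square that $\gamma$ crosses cleanly from its top to its bottom — the relevant metric piece records length at least the rescaled vertical extent of $\gamma$ within it, and since $\gamma$ runs connectedly from the top of $R_l$ to the bottom, these vertical extents sum to the full height, giving total length $\ge1$. The difficulty is a curve that enters a good sub-square and exits through one of its \emph{vertical} sides — an ``arch'' — for which the extremal metric of that sub-square need not record the vertical extent that $\gamma$ spends there. Controlling this is the technical heart: one uses that $\varphi$ is a genuine $K$-quasiconformal map — so its restrictions to the edges of the sub-squares are $\eta_K$-quasisymmetric and $\varphi$ is locally Hölder with exponent $\alpha(K)$ — to estimate sub-square by sub-square the shortfall between the recorded length and the vertical extent $\gamma$ traverses there, and to arrange that the global $\theta\,\rho_{\mathrm{taut}}$ term absorbs the accumulated shortfall. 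Once admissibility is in hand, the remainder is the bookkeeping with the series and area inequalities for moduli sketched above, which also shows $K_0<K_1<K$.
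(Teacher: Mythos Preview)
Your reduction to rows and the counting are fine, and the idea of saving area on the good sub-squares is exactly right, but the admissibility step is a genuine gap, not a technicality. When you replace $\rho_{\mathrm{taut}}$ on a good square $S_{l,k}$ by the extremal metric $\rho_{l,k}$ of $\varphi\bigl(\Gamma_{\leftrightarrow}(S_{l,k})\bigr)$, that metric is the pullback of the flat metric on a rectangle $[0,M_k]\times[0,1]$ under the conformal map $\psi_k:\varphi(S_{l,k})\to[0,M_k]\times[0,1]$; the $\rho_{l,k}$-length of an arc of $\varphi(\gamma)$ therefore records vertical progress \emph{in the $\psi_k$-coordinate}, not in $S_{l,k}$. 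When $\gamma$ crosses from $S_{l,k}$ to $S_{l,k+1}$ at a point $p$ on their common edge, the heights of $\psi_k(\varphi(p))$ and $\psi_{k+1}(\varphi(p))$ are related only through an $\eta_K$-quasisymmetric boundary correspondence fixing the endpoints, so they may differ by a fixed constant depending only on $K$. A curve that oscillates across a shared edge while descending can thus accumulate a shortfall of order one per crossing, with Euclidean length still essentially the height of $R_l$; adding $\theta\,\rho_{\mathrm{taut}}$ then forces $\theta$ to be bounded below by a constant depending on $K$, which wipes out the area saving. The quasisymmetry and H\"older facts you invoke give exactly these $O(1)$ bounds and nothing finer.

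The paper sidesteps patching entirely by using a single \emph{directional} metric on all of $\varphi(S)$, namely
\[
\rho^*(w)=\frac{1}{\ell(S)}\,\Bigl|\frac{\partial\varphi}{\partial x}\bigl(\varphi^{-1}(w)\bigr)\Bigr|^{-1},
\]
with $x$ the coordinate along which $\Gamma_{\leftrightarrow}(S)$ progresses. This is admissible for $\Gamma_{\leftrightarrow}(\varphi(S))$ because every $\gamma\in\Gamma_{\leftrightarrow}(S)$ has $x$-variation at least $\ell(S)$, and the pointwise bound $|\partial_x\varphi|^{2}\ge K^{-1}\Jac\varphi$ gives $A(\rho^*)\le K$, with equality precisely when $\varphi$ is the extremal horizontal stretch. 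The key feature---absent from your isotropic $\rho_{\mathrm{taut}}$---is that $A\bigl(\rho^*\cdot\chi_{\varphi(S_i)}\bigr)$ can be close to $K/n^2$ only if $\varphi|_{S_i}$ is close to that particular stretch, hence $\Mod\varphi(S_i)$ close to $K$. A compactness argument then yields a uniform defect $A\bigl(\rho^*\cdot\chi_{\varphi(S_i)}\bigr)\le(1-\varepsilon)K/n^2$ on every good square, and summing gives $\Mod\varphi(S)\le K_1<K$ directly: no row reduction, no patching, no admissibility issue.
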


\begin{proof}
Since any path in $\Gamma_{\leftrightarrow}(S)$ travels $\ge \ell(S)$ horizontally, the metric
$$
\rho^*(w) = \frac{1}{\ell(S)} \cdot \biggl | \frac{\partial \varphi}{\partial x} (\varphi^{-1}(w)) \biggr |^{-1}, \qquad w \in \varphi(S),
$$
is admissible for $\Gamma_{\leftrightarrow}(\varphi(S))$. Since
\begin{equation}
\label{eq:deficiency}
\biggl | \frac{\partial \varphi}{\partial x} \biggr |^2 \ge (1/K) \cdot \Jac \varphi
\quad \implies \quad
\biggl | \frac{\partial \varphi}{\partial x} \circ \varphi^{-1} \biggr |^{-2} \le K \cdot \Jac \varphi^{-1},
\end{equation}
the area $A(\rho^*) \le K$, which shows that $\Mod \varphi(S) \le K$ and equality holds if and only if $\varphi$ is the extremal stretch by $K$ in the horizontal direction. A compactness argument shows that  (\ref{eq:deficiency}) has a definite defect on any good square $S_i$, $A(\rho^* \cdot \chi_{\varphi(S_i)}) \le \frac{(1-\varepsilon)K}{n^2}$, which gives the required improvement.
\end{proof}

 \begin{proof}[Proof of Lemma \ref{extremal-direction1}]
If the lemma were false, there would exist constants $K_0 < K^*$ and $c_0 > 0$ such  that
 \begin{equation}
 \label{eq:contradiction}
 \mathbb{P}_{\delta} \bigl ( \Mod w^{\mu}(S_{\theta^*}) < K_0 \bigr ) \ge c_0, \qquad \text{ for any }\delta > 0 \text{ sufficiently small}.
\end{equation}
Assuming this, we will construct a sequence of quasiconformal maps $\varphi_k$ with the following properties:
\begin{enumerate}
\item[(i)] $\varphi_k$ is $(K, 1/k)$ roughly quasiconformal on $B(0,2R)$ where $K$ is from Lemma \ref{random-maps-are-roughly-qc}.
\item[(ii)] $\Mod \varphi_k(\sigma) \le K^* + 1/k$ for all $\sigma \in \mathcal S'_{\Omega}(1/k)$, where $\mathcal S'_{\Omega}(1/k)$ is a finite collection of squares in $\Omega$ of side length $\ge 1/k$ which was defined
in Section \ref{sec:roughly-qc}.
\item[(iii)]
$\Mod \varphi_k(S_{\theta^*}) > K^* - \varepsilon > K_1 > K_0$, where $K_1$ is given by Lemma \ref{deficiency-lemma} with $K = K^*$, $c = c_0/8$ and $\varepsilon > 0$ is any constant less than $K^* - K_1$.
\item[(iv)]
For an integer $n \ge 1$, divide $S_{\theta^*} = S_1 \cup S_2 \cup \dots \cup S_{n^2}$ into $n^2$ squares of equal size, where $n$ is a  positive integer that will be chosen below.
For  at least $(c_0/8) n^2$ of these squares, $\Mod \varphi_k(S_i) \le K_0$.
\end{enumerate}

By (i), the sequence of mappings $\varphi_k$ is precompact, (ii) implies that any subsequential limit  $\varphi$
is $K^*$ quasiconformal, (iii) tells us that $\Mod \varphi(S_{\theta^*}) \ge K^* - \varepsilon$, while (iv)
ensures that $\Mod \varphi_k(S_i) < K_0$ for at least $(c_0/8)n^2$ of the small squares $S_i$. However, these properties are incompatible by Lemma \ref{deficiency-lemma}.

Clearly, (iii) holds with probability $\ge c(\varepsilon)$  for the special scales $\delta_{\varepsilon, j}$ from (\ref{eq:special-scale}).
 According to Lemma \ref{random-maps-are-roughly-qc}, by requesting the mesh size $\delta_{\varepsilon, j(k)}$ to be small,  we can ensure that the probability that $\varphi_k$ is $(K, 1/k)$ quasiconformal exceeds $1 - c(\varepsilon)/4$.
Since the number of squares in $\mathcal S'_{\Omega}(1/k)$ is finite, if
 $\delta_{\varepsilon, j(k)}$ is small,  then
\begin{equation}
\label{eq:background-squares}
\mathbb{P}\Bigl (\Mod  w^{\mu}(\sigma) \le K^* + 1/k, \, \forall \sigma \in \mathcal S'_{\Omega}(1/k) \Bigr ) \, > \, 1- c(\varepsilon)/4.
\end{equation}

Discard $\sim 3n^2/4$ of the small squares $S_i$ so that the remaining squares are a definite distance apart (and therefore the  moduli of their images are independent). By
(\ref{eq:contradiction}) and the law of large numbers, we can pick $n$ sufficiently large so that for arbitrarily small $\delta > 0$, with probability at least $1-c(\varepsilon)/4$, $\Mod \varphi(S_i) \le K_0$
for at least $(c_0/8)n^2$  squares $S_i$.

To summarize, if we choose $\delta_{\varepsilon, j(k)}$ in a suitable manner, then with positive probability, the random quasiconformal map $\varphi_k = w^{\mu}$ satisfies (i)--(iv).
\end{proof}

A compactness argument similar to the one in Lemma \ref{extremal-direction1} shows:

 \begin{lemma}
 \label{extremal-direction2}
There exists a sequence of scales $\delta'_j \to 0$ so that if $S \subset \Omega$    has the same orientation as $S_{\theta^*}$,
then with probability $> 1 - 1/j$,

{\em (i)} $f$ is $(K  K^*, \ell(S)/j)$ roughly quasiconformal on $2S$,

{\em (ii)} $\Mod f(\sigma) < 1 + 1/j$  for all $\sigma \in \mathcal S'_{S}(\ell(S)/j)$,

{\em (iii)} $\Mod f(S) > 1 - 1/j$,

\noindent when the mesh size $\delta = \ell(S)/\ell(S_{\theta^*}) \cdot \delta'_j$  is  adapted to $S$.
\end{lemma}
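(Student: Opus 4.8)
The plan is to mimic the structure of Lemma \ref{extremal-direction1}, but now we want \emph{uniform} control over all squares $S$ having the orientation of $S_{\theta^*}$, with the scale $\delta$ scaled proportionally to $\ell(S)$. By part (i) of the preceding lemma, $K^*(S)$ depends only on the orientation of $S$, so every such $S$ has the same maximal effective distortion $K^*$, which is precisely the stretch factor achieved by $A_\lambda$ in the $S_{\theta^*}$ direction. The key point is that the distribution of $\Mod w^\mu_\delta(S)$ with $\delta = \ell(S)/\ell(S_{\theta^*}) \cdot \delta'_j$ is, up to the negligible grid-alignment discrepancy discussed in the sketch of the preceding lemma (controlled via rough quasiconformality and Lemma \ref{continuity-of-moduli}), the same as the distribution of $\Mod w^\mu_{\delta'_j}(S_{\theta^*})$. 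Since $f = A_\lambda^{-1}\circ w^\mu$ and $A_\lambda$ stretches the $S_{\theta^*}$ direction by exactly $K^*$, the statement ``$\Mod w^\mu(S_{\theta^*}) > K^* - \varepsilon$'' translates, for a square $S$ of that orientation, into ``$\Mod f(S) > 1 - \varepsilon'$'' for a corresponding small $\varepsilon'$; this gives (iii). Likewise, $\Mod w^\mu(\sigma) \le K^* + 1/k$ for background squares $\sigma$ translates into $\Mod f(\sigma) < 1 + 1/k'$, giving (ii), and the rough quasiconformality constant $K$ for $w^\mu$ becomes $KK^*$ for $f$ after composing with $A_\lambda^{-1}$, giving (i).

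First I would fix $S$ with the orientation of $S_{\theta^*}$ and observe that by the scaling and translation invariance built into (i) of the preceding lemma, we may as well argue at the level of $S_{\theta^*}$ itself with mesh $\delta'_j$, transferring the conclusions back to $S$ via the affine change of coordinates $z \mapsto \ell(S)/\ell(S_{\theta^*})\cdot z + b$. Then I would apply Lemma \ref{extremal-direction1} to obtain the sequence $\delta_j \to 0$ along which $\mathbb{P}_{\delta_j}(\Mod w^\mu(S_{\theta^*}) > K^* - 1/j) > 1 - 1/j$; this handles (iii). For (i), I would invoke Lemma \ref{random-maps-are-roughly-qc} to say $w^\mu$ is $(K, \ell(S)/j)$-roughly quasiconformal on a neighborhood of $2S$ with probability $\ge 1 - 1/(3j)$ once $\delta$ is small, and note that precomposing with the fixed affine map $A_\lambda^{-1}$ (which distorts moduli by the fixed factor $K^* = K(A_\lambda)$) degrades the constant to $KK^*$. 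For (ii), since $\mathcal S'_S(\ell(S)/j)$ is a \emph{finite} collection, I would use the definition of $K^*$ (each such square has $K^*(\sigma) = K^*$, so $\mathbb{P}(\Mod w^\mu(\sigma) > K^* + 1/j) \to 0$) together with a union bound over the finitely many $\sigma$ to get $\mathbb{P}(\Mod w^\mu(\sigma) \le K^* + 1/j \ \forall \sigma) > 1 - 1/(3j)$ for $\delta$ small; translating through $A_\lambda^{-1}$ gives $\Mod f(\sigma) < 1 + O(1/j)$, which we absorb into $1 + 1/j$ after reindexing. Combining the three events by a union bound yields probability $> 1 - 1/j$ (again after a harmless reindexing of $j$), with the mesh $\delta = \ell(S)/\ell(S_{\theta^*})\cdot \delta'_j$ for $\delta'_j$ taken along a common subsequence of the scales produced above.

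The main obstacle I expect is making the ``distribution transfer'' between $S$ at mesh $\delta$ and $S_{\theta^*}$ at mesh $\delta'_j$ genuinely uniform over all squares $S$ of the given orientation, rather than just for a single fixed $S$. The issue is that as $\delta'_j$ is fixed and $\ell(S)$ ranges over a continuum, the way the $\delta$-grid meets $\partial S$ varies, and one needs the grid-alignment discrepancy to be controlled by a modulus of continuity that does not depend on $S$. This is handled exactly as in the sketch following the preceding lemma: rough quasiconformality (which holds on a large ball with high probability, uniformly) lets one invoke Lemma \ref{continuity-of-moduli} to bound the change in $\Mod w^\mu$ under the $O(\delta)$ Hausdorff perturbation of $S$ coming from snapping to the grid, and since this perturbation is $O(\delta) = O(\ell(S)\delta'_j/\ell(S_{\theta^*}))$, the \emph{relative} perturbation $d(\cdot,\cdot)/\ell(S)$ is $O(\delta'_j)$, independent of $\ell(S)$. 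A secondary point to be careful about is that $f$ need only be controlled on $2S$ rather than all of $B(0,2R)$, and that $S$ together with its grid-neighborhood must lie inside the region where the dilatation is randomized; since the statement presupposes $S \subset \Omega$, shrinking $\delta'_j$ as needed ensures $2S$ and its grid approximation stay within $\Omega$, so the earlier lemmas apply verbatim there.
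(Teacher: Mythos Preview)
Your handling of (i) and (iii) is fine, but the argument for (ii) has a genuine gap. You write that ``$\Mod w^{\mu}(\sigma) \le K^* + 1/k$ for background squares $\sigma$ translates into $\Mod f(\sigma) < 1 + 1/k'$'' after ``translating through $A_\lambda^{-1}$.'' This step is false as stated. The image $w^{\mu}(\sigma)$ is a \emph{conformal} rectangle, not a geometric one, and the affine map $A_\lambda^{-1}$ does not divide its modulus by $K^*$ in general; all that $K^*$-quasiconformality of $A_\lambda^{-1}$ gives is $\Mod f(\sigma) \le K^*\bigl(K^*+1/k\bigr)$, which is useless. Concretely, if $w^{\mu}(\sigma)$ happened to be a geometric rectangle of modulus $K^*$ with its long side \emph{perpendicular} to the $\theta^*$-direction, then $A_\lambda^{-1}$ would send it to a rectangle of modulus $(K^*)^2$, not $1$. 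So an upper bound on $\Mod w^{\mu}(\sigma)$ alone cannot yield (ii); one must use the additional information that $w^{\mu}$ is itself close to $A_\lambda$ on $S$.

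This is exactly what the paper's ``compactness argument similar to the one in Lemma~\ref{extremal-direction1}'' supplies, and it is the missing idea in your plan. Along the good scales from Lemma~\ref{extremal-direction1} one has, with high probability, simultaneously: $w^{\mu}$ is $(K,1/j)$ roughly quasiconformal; $\Mod w^{\mu}(\sigma)\le K^*+1/j$ for all $\sigma$ in the finite family $\mathcal S'_{S}(\ell(S)/j)$; and $\Mod w^{\mu}(S)>K^*-1/j$. If (ii) failed with probability bounded away from zero along a subsequence, one could extract realizations $\varphi_k$ satisfying all three of these but with some $\sigma_k\subset S$ for which $\Mod(A_\lambda^{-1}\circ\varphi_k)(\sigma_k)\ge 1+\eta$. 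Any subsequential limit $\varphi$ is then a $K^*$-quasiconformal map with $\Mod\varphi(S)\ge K^*$; equality in the modulus distortion forces $\varphi|_S$ to be the extremal $K^*$-stretch in the $\theta^*$-direction, i.e.\ $A_\lambda$ up to a conformal post-composition. Hence $A_\lambda^{-1}\circ\varphi$ is conformal on $S$, so $\Mod(A_\lambda^{-1}\circ\varphi)(\sigma)=1$ for every square $\sigma\subset S$, contradicting the persistence of $\Mod(A_\lambda^{-1}\circ\varphi_k)(\sigma_k)\ge 1+\eta$. In short, (ii) is not obtained by translating a single modulus bound through $A_\lambda^{-1}$, but by first pinning down the limit map via the extremality in (iii) together with the $K^*$-bound on all test squares.
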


Note that the conditions in Lemma \ref{extremal-direction2} are local in nature: they only depend on the behaviour of $\mu$ in $2S$.
When they are satisfied, $f$ is close to conformal on $S$, and therefore, by Koebe's distortion theorem, $f$ is close to linear deep inside $S$:

\begin{lemma}
\label{extremal-direction3}
For any $0 < \eta < 1/4$, there exists a $j(\eta)$ so that if $(f, S)$ satisfies {\em (i)--(iii)} from Lemma \ref{extremal-direction2}, then

{\em (iv)} $f$ is within $\eta^2 \cdot \diam f(S)$ of a conformal map $\varphi: S \to \mathbb{C}$,

{\em (v)} $f$ is $C\eta $-close to linear on $\eta S$  where $C = C(KK^*) > 0$,

\noindent when the mesh size $\delta = \ell(S)/\ell(S_{\theta^*}) \cdot \delta'_{j(\eta)}$  is adapted to $S$.
\end{lemma}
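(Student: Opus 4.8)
The plan is to establish part (iv) by a compactness argument and then deduce (v) from (iv) via the Koebe distortion theorem.

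\emph{Part (iv).} Suppose the claim fails for some $\eta \in (0,1/4)$. Then for every $j$ there is a quasiconformal map $f_j$ satisfying (i)--(iii) of Lemma~\ref{extremal-direction2} with that value of $j$, yet $f_j$ is not within $\eta^2\diam f_j(S)$ of any conformal map on $S$. Post-composing each $f_j$ with an affine map — which preserves (i), (ii), (iii) and the failure of the conclusion — I normalize so that $f_j(z_0)=0$ and $\diam f_j(S)=1$, where $z_0$ is the center of $S$. By (i) and the equicontinuity estimate from the proof of Lemma~\ref{compactness-criterion}, $\{f_j\}$ is precompact in $C(\overline S)$; pass to a subsequence converging locally uniformly on $2S$ to a map $g$. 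By Lemma~\ref{compactness-criterion} again, $g$ is $L(KK^*)$-quasiconformal on $2S$, hence differentiable a.e.\ with positive Jacobian. Using (ii), the fact that $\bigcup_j \mathcal S'_S(\ell(S)/j)$ is dense — in the Hausdorff topology, including rotations — in the set of squares contained in $S$ of small diameter, the continuity of modulus (Lemma~\ref{continuity-of-moduli}) together with $f_j \to g$ locally uniformly, and the product relation~\eqref{eq:product-of-moduli}, I get $\Mod g(\sigma) \le 1$ for every sufficiently small square $\sigma \Subset S$ with either choice of marked pair of sides; applying this to a square and to its $90^\circ$ rotate forces $\Mod g(\sigma)=1$. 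By the observation in Section~\ref{sec:roughly-qc} that an a.e.-differentiable orientation-preserving homeomorphism distorting moduli of squares by exactly $1$ is conformal, $g$ is conformal on $S$. But then $\|f_j-g\|_{C(\overline S)}\to 0$, so eventually $\|f_j-g\|_{C(\overline S)} < \eta^2 = \eta^2\diam f_j(S)$ along the subsequence, contradicting the choice of the $f_j$. This also shows $j(\eta)$ may be chosen depending only on $\eta$ and $KK^*$.

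\emph{Part (v).} Given (iv), let $\varphi\colon S\to\mathbb C$ be the conformal map with $\|f-\varphi\|_{C(S)}\le \eta^2\diam f(S)$, let $z_0$ be the center of $S$, and put $L(z)=\varphi(z_0)+\varphi'(z_0)(z-z_0)$. Since $\eta<1/4$, the square $3\eta S$ lies well inside $S$, and the Koebe distortion theorem applied to $\varphi$ on a disk about $z_0$ contained in $S$ gives $\|\varphi-L\|_{C(3\eta S)} \le C_0\,\eta\,\diam L(\eta S)$ with $C_0$ universal, together with the lower bound $\diam\varphi\bigl(B(z_0,\ell(S)/4)\bigr)\gtrsim \ell(S)\,|\varphi'(z_0)|$. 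Feeding the last bound through the quasisymmetry/doubling property of the roughly quasiconformal map $f$ (Lemma~\ref{compactness-criterion}) — which transfers the local scale near $z_0$ to all of $S$ — yields $\diam f(S)\le C(KK^*)\,\ell(S)\,|\varphi'(z_0)|$, and this is where the constant $C(KK^*)$ enters. Hence
\[
\|f-L\|_{C(3\eta S)} \;\le\; \eta^2\diam f(S) + C_0\,\eta\,\diam L(\eta S) \;\le\; C(KK^*)\,\eta\,\diam L(\eta S),
\]
because $\eta^2\diam f(S)\lesssim_{KK^*}\eta^2\ell(S)|\varphi'(z_0)|\asymp \eta\,\diam L(\eta S)$; that is, $f$ is $C\eta$-close to linear on $\eta S$. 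Alternatively, (v) can be obtained by a compactness argument parallel to Part (iv), this time normalizing $\diam f_j(\eta S)=1$, which sidesteps the comparison of $\diam f(S)$ with the local scale.

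\emph{Main obstacle.} The crux is the rigidity step in Part (iv): deducing that the compactness limit $g$ is \emph{exactly} conformal. This needs three ingredients working together — that $g$ is a genuine quasiconformal map, so it is differentiable a.e.\ and Lemma~\ref{continuity-of-moduli} applies; the Hausdorff-continuity passage from the finite collections $\mathcal S'_S(\ell(S)/j)$ to \emph{all} small squares in $S$; and the conjugate-family trick that converts the one-sided bound $\Mod g(\sigma)\le 1$ into an equality, since only then does the a.e.-differentiability criterion for conformality apply. A secondary technical point is making sure the $\eta^2$ gain of (iv) genuinely dominates the scale $\diam L(\eta S)$ appearing in (v), which is exactly what forces the use of rough quasiconformality — rather than (iv) alone — to bound $\diam f(S)$ by $\ell(S)\,|\varphi'(z_0)|$.
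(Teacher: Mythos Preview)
Your proof is correct and follows the same outline the paper sketches: a compactness argument for (iv), and Koebe's distortion theorem together with the doubling property (\ref{eq:doubling}) for (v). You have supplied the details the paper omits, including the rigidity step (passing from $\Mod g(\sigma)\le 1$ to conformality via the conjugate-family identity) and the comparison $\diam f(S)\asymp \ell(S)\,|\varphi'(z_0)|$, which is precisely where the constant $C(KK^*)$ enters.
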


Property (iv) follows from a compactness argument, while (v) follows from Koebe's distortion theorem and the doubling property (\ref{eq:doubling}) which tells us that
$\diam f(\frac{1}{2} S) \asymp \diam f(S)$.
We now eliminate the need to use a subsequence of scales:

 \begin{lemma}
 \label{extremal-direction}
For any square $S \subset \Omega$ and $\varepsilon > 0$, when the mesh size $\delta < \delta_0(\varepsilon, S)$ is sufficiently small, $$
 \mathbb{P}_{\delta} \bigl ( 1 - \varepsilon < \Mod  f(S) < 1 + \varepsilon \bigr ) > 1 - \varepsilon.
$$
\end{lemma}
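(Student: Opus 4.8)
The point is to promote the ``good subsequence of scales'' of Lemmas~\ref{extremal-direction2}--\ref{extremal-direction3} to all small scales by feeding Lemma~\ref{blue-conformality2}: at a prescribed mesh $\delta$, the cells on which $f=A_\lambda^{-1}\circ w^\mu$ fails to be nearly linear will form a percolation of small parameter on a grid whose cell size is tied to $\delta$.

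Fix $\varepsilon>0$ and a square $\hat S$ with $S\Subset\hat S\Subset\Omega$ (possible since $\bar S$ is a compact subset of the open set $\Omega$; here $\ell(\hat S)$ need only slightly exceed $\ell(S)$, because the upper bound on side lengths in $\mathcal S(\varepsilon)$ plays no role in the proof of Lemma~\ref{blue-conformality2}, so that square $S'=S$ is still among the controlled squares). Apply a rescaled instance of Lemma~\ref{blue-conformality2} on $\hat S$, with accuracy $\varepsilon/2$ and a percolation grid oriented like $S_{\theta^*}$ --- the orientation of the grid also plays no role in that proof. Combined with modification~(iv) of Lemma~\ref{mathieu-lemma2}, this yields, for each odd $m$, constants $\eta_0(\varepsilon)$, $N_0'(\varepsilon)$ and a threshold $r_0(\varepsilon,m)>0$ with the following property: for a percolation of parameter $<r_0$ and mesh $\le\ell(\hat S)/N_0'$ on that grid in which cells at combinatorial distance $\ge m$ are coloured independently, with probability $\ge1-\varepsilon/2$ \emph{every} orientation-preserving homeomorphism $g\colon\hat S\to\mathbb C$ that is $\eta_0$-close to linear on the blue cells satisfies $1-\varepsilon<\Mod g(S)<1+\varepsilon$.

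Now choose $\eta$ with $C\eta\le\eta_0$, where $C=C(KK^*)$ is the constant of Lemma~\ref{extremal-direction3}(v), put $m:=\lceil2/\eta\rceil+1$, and --- $\eta$, $m$, hence $r_0(\varepsilon,m)$ now being fixed --- pick an index $J\ge j(\eta)$ so large that $1/J<r_0(\varepsilon,m)$, with scale $\delta'_J\to0$ from Lemma~\ref{extremal-direction2}. Given a small mesh $\delta$, tile $\hat S$ by an $S_{\theta^*}$-oriented grid of cells $q$ of side $\ell_0:=\eta\,\ell(S_{\theta^*})\,\delta/\delta'_J$ (so $\ell_0\to0$ as $\delta\to0$), and to each $q$ associate the concentric square $\square_q$ of side $\ell_0/\eta$; by construction $\delta$ is exactly the mesh \emph{adapted to $\square_q$} in the sense of Lemmas~\ref{extremal-direction2}--\ref{extremal-direction3}. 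Colour $q$ \emph{blue} when $(f,\square_q)$ satisfies conditions (i)--(iii) of Lemma~\ref{extremal-direction2} at index $J$. Then: (a) for $\delta$ small every $\square_q$ that meets a crossing of $S$ lies in $\Omega$ and $\ell_0\le\ell(\hat S)/N_0'$; (b) $\mathbb P(q\text{ blue})\ge1-1/J>1-r_0$ by Lemma~\ref{extremal-direction2}; (c) conditions (i)--(iii) at index $J$ imply those at index $j(\eta)$, and conclusions (iv)--(v) of Lemma~\ref{extremal-direction3} are deterministic consequences of (i)--(iii) --- via compactness and Koebe's theorem together with the doubling bound \eqref{eq:doubling} --- so on a blue $q$ the map $f$ is $C\eta$-close, hence $\eta_0$-close, to linear on $\eta\square_q=q$, irrespective of which adapted scale produced the conditions; (d) ``$q$ blue'' depends only on $\mu|_{2\square_q}$, so cells at combinatorial distance $\ge m=\lceil2/\eta\rceil+1$ are coloured independently.

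Thus the colouring meets the hypotheses of the instance of Lemma~\ref{blue-conformality2} selected above, and with probability $\ge1-\varepsilon/2>1-\varepsilon$ we obtain $1-\varepsilon<\Mod f(S)<1+\varepsilon$, as claimed. The two steps I expect to demand the most care are the locality asserted in (d) --- that distant cells are genuinely independently coloured, which the percolation estimate of Section~\ref{sec:percolation} needs in the honest (modification~(iv)-type) sense; this is where one uses the conformal invariance of the modulus, which lets one absorb the influence of $\mu$ outside $2\square_q$ into a conformal post-composition that leaves conditions (i)--(iii) untouched --- and the scale-matching that extracts $\ell_0$ from the given $\delta$, which is precisely the device that removes the subsequence of scales.
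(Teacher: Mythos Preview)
Your proposal is correct and follows essentially the same route as the paper: tile by an $S_{\theta^*}$-oriented grid of side $\eta\,\ell(S_{\theta^*})\,\delta/\delta'_J$, colour a cell blue when $(f,\eta^{-1}q)$ satisfies (i)--(iii) of Lemma~\ref{extremal-direction2}, observe that Lemma~\ref{extremal-direction3} makes $f$ $C\eta$-close to linear on blue cells, and then invoke Lemma~\ref{blue-conformality2} together with modification~(iv) of Lemma~\ref{mathieu-lemma2}. Your write-up is in fact more careful than the paper's about the bookkeeping --- choosing $J$ large enough that $1/J<r_0(\varepsilon,m)$ with $m=\lceil 2/\eta\rceil+1$, and spelling out why the colouring at cells of combinatorial distance $\ge m$ is independent via conformal invariance of modulus --- but the architecture is identical.
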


\begin{proof}
Suppose $0 < \delta < \delta'$ where $\delta' = \delta'_{j(\eta)}$.  Consider the grid $e^{i\theta^*} \beta\mathbb{Z}^2$ which consists of squares of
side length $\beta = \eta \cdot \delta/\delta' \cdot  \ell(S_{\theta^*})$ that have the same orientation as $S_{\theta^*}$.
We colour a cell $\square \in e^{i\theta^*} \beta\mathbb{Z}^2$ {\em blue} if $(f, \eta^{-1}  \square)$ satisfies conditions (i)--(iii) of Lemma \ref{extremal-direction2} and {\em yellow} otherwise.
According to Lemma \ref{extremal-direction2}, the probability that any given cell is blue is at least $1 - 1/j$.
 Even though the colours of the cells are not independent, the colour of a cell only depends on the behaviour of the Beltrami coefficient $\mu$ in $2\eta^{-1} \, \square$.

By Lemma \ref{extremal-direction3}, $f$ is $C\eta$-close to linear on the blue cells. In view of modification (iv) of Lemma \ref{mathieu-lemma2}, Lemma \ref{blue-conformality2} tells us that with high probability, $\Mod f(S)$ is close to 1: the error can be made arbitrarily small by requesting $\eta$ to be small and $j(\eta)$ to be large.
\end{proof}

It is now a simple matter to prove Theorem \ref{main-thm2}:

\begin{proof}[Proof of Theorem \ref{main-thm2}.]
As noted previously, to show that $w^{\mu}$ is close to $\Phi$, we can instead show that
$f = A_\lambda^{-1} \circ w^{\mu}$ is close to $A_\lambda^{-1} \circ \Phi$.
By Lemmas  \ref{random-maps-are-roughly-qc}
and  \ref{extremal-direction}, for any $\varepsilon > 0$, if the mesh size $\delta < \delta_0(\varepsilon)$ is small, then with  probability $\ge 1 - \varepsilon$,
\begin{equation}
\label{eq:condition1}
f \text{ is }(KK^*, \varepsilon) \text{ roughly quasiconformal on }B(0,2R),
\end{equation}
\begin{equation}
\label{eq:condition2}
 1 - \varepsilon \, \le \, \Mod f(\sigma) \, \le \, 1 + \varepsilon, \qquad \forall \sigma \in \mathcal S'_{\Omega}(\varepsilon).
\end{equation}
\begin{equation}
\label{eq:condition3}
\frac{\overline{\partial }f}{\partial f}\, = \, \frac{\overline{\partial} (A^{-1}_\lambda \circ \Phi)}{\partial (A^{-1}_\lambda \circ \Phi)}, \qquad \text{on }\mathbb{C} \setminus \Omega.
\end{equation}
If the theorem were false, we would have a sequence of quasiconformal mappings $\varphi_n$ which satisfy the above conditions with $\varepsilon = 1/n$, but were a definite distance away from $A^{-1}_\lambda \circ \Phi$ in $C \bigl (B(0,R) \bigr )$.
This is impossible since $A^{-1}_\lambda \circ \Phi$ is the only possible limit of such a sequence.
\end{proof}

\section{Random q.c.~mappings on the plane}

Let $\mu$ be a random Beltrami coefficient on the plane, constructed with help of the probability measure $\lambda$. Since $K_\mu = \frac{1 + |\mu|}{1-|\mu|}$ is not bounded in general, one may wonder if there is a homeomorphism with dilatation $\mu$.
Uniqueness follows from the fact that any two homeomorphisms of the plane with the same dilatation differ by a conformal automorphism. For existence, we only need $K_\mu$ to be locally bounded,
although a priori, the homeomorphism may not be surjective. Namely, for any $R > 2$, we can truncate $\mu_R =\mu \cdot \chi_{B(0,R)}$ and use the measurable Riemann mapping theorem to construct a quasiconformal map
with dilatation $\mu_R$ that fixes $-1,0,1$. A normal families argument shows that there is a homeomorphism of the Riemann sphere with dilatation $\mu$. One can then post-compose it with a M\"obius transformation
to make it fix $0, 1, \infty$.

We now check that $w^{\mu}$ is surjective almost surely.
Choose $0 < k < 1$ so that $\lambda \bigl (\{z : k < |z| < 1\} \bigr ) < r_0$ where $r_0$ is from Lemma \ref{mathieu-lemma2}.
Colour a cell $\square$ in $\delta \mathbb{Z}^2$ {\em yellow} if
$|\mu(\square)| >k$ and {\em blue} otherwise.
 From the construction of $\mu$, the probability that a cell is yellow is less than $r_0$.
  Since $\sum_{N=1}^\infty (\delta/N)^2 < \infty$,  the Borel-Cantelli lemma and Lemma \ref{mathieu-lemma2} show that almost surely, for all sufficiently large $N \ge N_0(\mu)$, the homeomorphism $ w^{\mu}(z)$ is $(K, \delta \log(N/\delta))$ roughly quasiconformal on $[-N,N] \times [-N,N]$.
This makes the moduli of every sufficiently large annulus $w^{ \mu}\bigl (A(0, N,2N) \bigr)$ bounded from below which forces $w^{\mu}$ to be surjective.

Fix a ball $B(0,R)$ with $R > 2$. We now show that when $\delta > 0$ is sufficiently small, then with
 with probability at least $1 - \varepsilon$, $w^{\mu}(z)$ is within $\varepsilon$ of the affine mapping $A_{\lambda}(z)$ on $B(0,R)$.
For any $\gamma > 1$,
 the compactness of roughly quasiconformal mappings tell us that when the mesh size $\delta < \delta_0(R, \varepsilon, \gamma)$ is sufficiently small,
 $ w^{\mu}|_{B(0, \gamma R)}$ is close to a quasiconformal map $\Phi^\mu: B(0, \gamma R) \to \mathbb{C}$ with constant dilatation $\mu_\lambda$. By requesting $\gamma$ to be large and applying
  Koebe's distortion theorem, we see that $(A_\lambda^{-1} \circ  \Phi^{\mu})|_{B(0, R)}$ is close to a linear map. Since $A_\lambda^{-1} \circ  \Phi^{\mu}$ fixes the points 0 and 1, it is essentially the identity.

\section{Moduli of rectangles in circle packings}
\label{sec:moduli-CP}

For a combinatorial rectangle $R$ in a circle packing, we have two different notions of moduli for curves connecting the opposite sides of $R$: the discrete modulus of the triangulation and the continuous modulus of the carrier. In general, the two notions of modulus are unrelated, however, when $\mathcal P$ has bounded geometry,
the continuous modulus and the discrete modulus agree up to a multiplicative constant.

The Euclidean Ring Lemma \cite[Lemma 8.2]{stephenson-book} says that for any $N \ge 3$, there exists a constant $\mathfrak c(N) > 0$ such that if $C = B(v, r)$ is an interior circle in $\mathcal P$ whose degree is at most $N$, then $r_i \ge \mathfrak c(N) \cdot r$ for any circle $C_i = B(v_i, r_i) \in \mathcal P$ tangent to $C$. In this case,  $B \bigl (v, (1+\mathfrak c(N)/2) r \bigr )$ can intersect at most $N+1$ circles from $\mathcal P$.

\begin{lemma}
\label{moduli-CP}
Let $R$ be a combinatorial rectangle in a circle packing $\mathcal P$. Suppose $\rho_{\disc}$ is a discrete metric defined on the vertices of the underlying triangulation, which
 is supported on the set of vertices of degree at most $N$. Let $\eta = \mathfrak c(N)/2$.
Define a continuous metric by the formula
\begin{equation}
\label{eq:cont-metric-def}
\rho_{\cont}(z) \, := \, \frac{1}{\eta} \sum_{B(v_i, r_i) \in \mathcal P}  \frac{ \rho_{\disc}(v_i) } {r_i} \cdot \chi_{B \bigl (v_i, (1+\eta)r_i \bigr )}(z).
\end{equation}
If $\rho_d$ was admissible for the horizontal curve family $\Gamma_{\leftrightarrow}^{\disc}$ in the discrete sense, then $\rho_c$ will be admissible for  $\Gamma_{\leftrightarrow}^{\cont}$ in the continuous sense.
Furthermore, the total area of $\rho_{\cont}$ is controlled by the total area of $\rho_{\disc}$: $A(\rho_{\cont}) \le C(N) A(\rho_{\disc})$.
\end{lemma}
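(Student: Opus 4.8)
The plan is to verify the two assertions of Lemma \ref{moduli-CP} separately: first that $\rho_{\cont}$ is admissible for $\Gamma_{\leftrightarrow}^{\cont}$, and second that its area is controlled by $A(\rho_{\disc})$ up to a constant depending only on $N$. Throughout, the Euclidean Ring Lemma is the workhorse: if $v_i$ has degree $\le N$ then every tangent circle has comparable radius (by the factor $\mathfrak c(N)$), and the enlarged disk $B(v_i,(1+\eta)r_i)$ with $\eta = \mathfrak c(N)/2$ meets at most $N+1$ circles of $\mathcal P$; moreover, two consecutive tangent circles $B(v_i,r_i)$, $B(v_{i+1},r_{i+1})$ along an edge have their enlarged disks overlapping in a region of definite size, since $|v_i - v_{i+1}| = r_i + r_{i+1}$ and each $r$ is at least $\mathfrak c(N)$ times the other.

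For admissibility: given a continuous curve $\beta \in \Gamma_{\leftrightarrow}^{\cont}$ crossing the carrier of $R$ horizontally, I want to extract from it a discrete horizontal path $\gamma = \langle v_0, v_1, \dots, v_n\rangle$ in the triangulation whose $\rho_{\disc}$-length is bounded by the $\rho_{\cont}$-length of $\beta$. The natural construction: as $\beta$ traverses $\carr R$, it passes through a sequence of triangles of $\mathcal T$; record the vertices of these triangles in the order encountered, and thin this sequence down to a genuine edge-path $\gamma$ connecting the two marked sides. The key estimate is that each vertex $v_i$ appearing in $\gamma$ with $\rho_{\disc}(v_i) > 0$ (so $\deg v_i \le N$) must have $\beta$ spending length at least $\eta r_i$ inside $B(v_i,(1+\eta)r_i)$ — indeed, whenever $\beta$ enters a triangle having $v_i$ as a vertex, it must pass within distance $r_i$ of $v_i$ roughly speaking... more carefully, a curve crossing a triangle incident to the circle $C_i$ either enters $B(v_i,(1+\eta)r_i)$ and stays there for length comparable to $\eta r_i$ before leaving, or the triangle's geometry (again controlled by the Ring Lemma) forces the relevant length bound. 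Summing, $\ell_{\rho_{\cont}}(\beta) \ge \frac{1}{\eta}\sum_i \frac{\rho_{\disc}(v_i)}{r_i}\cdot \eta r_i = \sum_i \rho_{\disc}(v_i) = \ell_{\rho_{\disc}}(\gamma) \ge 1$, the last inequality by discrete admissibility of $\rho_{\disc}$. I must also check that $\gamma$ can be taken to only use vertices actually ``seen'' by $\beta$, and that consecutive recorded vertices are genuinely adjacent in $\mathcal T$ — both follow from the fact that $\beta$ moves continuously through adjacent triangles.

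For the area bound: expand the square in the definition of $\rho_{\cont}$. Writing $\rho_{\cont} = \frac{1}{\eta}\sum_i \frac{\rho_{\disc}(v_i)}{r_i}\chi_{B_i}$ with $B_i = B(v_i,(1+\eta)r_i)$, we get $\rho_{\cont}^2 \le \frac{1}{\eta^2}\bigl(\sum_i \frac{\rho_{\disc}(v_i)}{r_i}\chi_{B_i}\bigr)^2$, and integrating, the cross terms $\int \chi_{B_i}\chi_{B_j}$ vanish unless $B_i \cap B_j \ne \emptyset$; by the Ring Lemma consequence, for each $i$ there are at most $N+1$ indices $j$ with $B_i \cap B_j \ne \emptyset$ (those circles must be among the $\le N+1$ circles met by $B(v_i,(1+\mathfrak c(N)/2)r_i)$, possibly after a harmless adjustment of constants since we need $B_j$, not $C_j$, to meet $B_i$), and for such $j$, $r_j \asymp_N r_i$ and $|B_i| \asymp_N |B_j| \asymp_N r_i^2$. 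Applying Cauchy–Schwarz to the at most $N+1$ overlapping terms, $\int \rho_{\cont}^2 \le \frac{C(N)}{\eta^2}\sum_i \frac{\rho_{\disc}(v_i)^2}{r_i^2}\cdot |B_i| \le C'(N)\sum_i \rho_{\disc}(v_i)^2 = C'(N) A(\rho_{\disc})$.

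The main obstacle I expect is the admissibility argument — specifically, making rigorous the claim that a curve crossing a triangle incident to $v_i$ is forced to accumulate $\gtrsim \eta r_i$ of $\rho_{\cont}$-length attributable to $v_i$, and simultaneously that the vertex sequence extracted from $\beta$ forms a legitimate discrete crossing path of $R$ rather than wandering outside it or failing to connect the marked sides. This requires a careful bookkeeping of how $\beta$'s itinerary through triangles translates to an edge-path, handling the endpoints near $\partial R$, and using the bounded-degree hypothesis (via the Ring Lemma) to keep all the geometric comparisons uniform. The area estimate, by contrast, is a routine bounded-overlap computation once the Ring Lemma consequences are in hand.
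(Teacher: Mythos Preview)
Your area bound is correct and matches the paper's: the key point is that at each $z$ the sum defining $\rho_{\cont}(z)$ has boundedly many nonzero terms (this is precisely the Ring Lemma consequence stated before the lemma), after which Cauchy--Schwarz and the obvious integration give $A(\rho_{\cont}) \le C(N) A(\rho_{\disc})$.

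The admissibility argument, however, has a genuine gap in the way you extract the discrete path. You propose to record the \emph{vertices of the triangles} that $\beta$ traverses and then thin to an edge-path. The trouble is that a curve can cross a triangle $v_iv_jv_k$ (with $\deg v_i \le N$) while staying entirely outside $B(v_i,(1+\eta)r_i)$: if $r_j,r_k$ are large compared with $r_i$ (the Ring Lemma only bounds them from \emph{below}), the curve can enter through the $C_j$-portion of edge $v_iv_j$, pass through the interstice near the $C_j$--$C_k$ tangency, and exit through the $C_k$-portion of edge $v_iv_k$, never coming within $(1+\eta)r_i$ of $v_i$. So the claim ``$\beta$ spends length $\ge \eta r_i$ in $B(v_i,(1+\eta)r_i)$ for each recorded $v_i$'' fails, and your fallback clause ``or the triangle's geometry forces the relevant length bound'' does not rescue it.

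The paper's fix is a single clean observation you are missing: since the sides of every triangle in $\mathcal P$ lie in $\bigcup_i \overline{C_i}$, one should track the sequence of \emph{circles $\beta$ actually enters}, not triangle vertices. Each time $\beta$ crosses an edge it is inside some closed disk $\overline{C_i}$, so the itinerary of entered circles forms a discrete path in $\Gamma_{\leftrightarrow}^{\disc}$ (consecutive entered circles border a common interstice, hence are tangent). For each such $C_i$, the curve genuinely reaches distance $\le r_i$ from $v_i$ and must at some point be outside $B(v_i,(1+\eta)r_i)$, so it crosses the annulus of width $\eta r_i$; the $i$-th summand of $\rho_{\cont}$ alone then contributes at least $\rho_{\disc}(v_i)$ to $\ell_{\rho_{\cont}}(\beta)$. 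Summing over the entered circles gives $\ell_{\rho_{\cont}}(\beta) \ge \ell_{\rho_{\disc}}(\gamma_{\disc}) \ge 1$. This is exactly the mechanism you were reaching for, but it only works once the discrete path is built from circles visited rather than triangles visited.
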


\begin{proof}
The bound on the total area is clear since the sum defining $\rho_{\cont}(z)$ has at most $N+1$ non-zero terms.
We now check that $\ell_{\rho_{\cont}} (\gamma) \ge 1$ for $\gamma \in \Gamma^{\cont}_{\leftrightarrow}$. Observe that since the sides of every triangle in $\mathcal P$ are contained in $\bigcup \overline{C_i}$, the combinatorial progress that $\gamma$ makes through the triangulation is recorded by the collection of circles it visits. However, the $\rho_c$ cost of entering (or exiting) the influence of a circle is at least $\rho_{\disc}(v_i)$.
\end{proof}

\begin{remark}
To study the boundary behaviour of maximal circle packings, we need to allow $R$ to be an extended combinatorial rectangle. For a boundary circle $C_i = B(v_i, r_i)$ in $\mathcal P$, we let $v^*_i$ denote the point of tangency between $C_i$ and the unit circle. We extend the underlying triangulation of $\mathcal P$ by adding  edges from  $v_i$ to $v_i^*$ and from $v_i^*$ and $v_j^*$ if  $C_{v_i}$ and $C_{v_j}$ are tangent. With this definition, the extended ``triangulation''  also includes some quadrilaterals. By an {\em extended combinatorial rectangle}, we simply mean a combinatorial rectangle in the extended triangulation.
It is easy to see that if $\mathcal P$ is a maximal circle packing, the Euclidean Ring Lemma also applies to boundary circles. We leave it to the reader to check that Lemma \ref{moduli-CP} also holds for extended combinatorial rectangles.
\end{remark}

\section{Random Delauney triangulations}
\label{sec:random-delauney}

In this section, we prove Theorem \ref{main-thm4} which says that a circle packing of a random Delauney triangulations approximates a conformal map. For technical reasons, it is preferable to use a slightly different construction of a random Delauney triangulation where the Delauney points are chosen according to a Poisson point process of high intensity. We recall the definition. For a measurable set $E \subset \Omega$, we denote its Euclidean area by $A(E)$ and the number of Poisson points
contained in $E$ by $N_E$.

 A {\em Poisson point process} of intensity $\lambda$ produces a random collection of points in $\Omega$ according to the following two axioms:

(1) For any measurable set $E \subset \Omega$, $$\mathbb{P}(N_E = n) = e^{-A(E)\lambda} \cdot \frac{(A(E) \lambda)^n}{n!}.$$

(2) If $E_1, E_2, \dots, E_k$ are disjoint measurable sets, then $N_{E_1}, N_{E_2}, \dots, N_{E_k}$ are independent random variables.

From the uniqueness of the Poisson point process, it follows that the union of two Poisson point processes is also a Poisson point process and the intensities add. The law of large numbers tells us that when the intensity
$\lambda$ is large, then with high probability (w.h.p.) $N_E \sim A(\Omega) \lambda$.

We will need the following  estimate:

\begin{lemma}
\label{points-in-a-square}
 Suppose $ \Omega \subset \mathbb{C}$ is a bounded domain. For any $0 < \varepsilon < 1$, when $\lambda \ge \lambda_0(\varepsilon, \Omega)$ is sufficiently large, with probability $1-\varepsilon$, the estimate
 $$
(1-\varepsilon) \cdot A(R) \lambda \, \le \, N_R \, \le \, (1+\varepsilon) \cdot A(R) \lambda
$$
holds for every rectangle $R \subset \Omega$ whose sides have length at least $\varepsilon$.
\end{lemma}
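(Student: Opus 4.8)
The plan is to prove the estimate for a \emph{single} fixed rectangle $R$ via a large-deviation bound, and then upgrade to a statement that is uniform over all rectangles $R\subset\Omega$ with side lengths $\ge\varepsilon$ by a net argument combined with monotonicity of $N$. First I would record the standard Chernoff bound for a Poisson random variable: if $X$ is Poisson with mean $\mu$, then for $0<\varepsilon<1$,
\begin{equation}
\label{eq:poisson-chernoff}
\mathbb{P}\bigl(|X-\mu|\ge \varepsilon\mu\bigr)\,\le\, 2\exp\bigl(-c\,\varepsilon^2\mu\bigr)
\end{equation}
for a universal constant $c>0$ (obtained by optimizing $\mathbb{E}[e^{tX}]=\exp(\mu(e^t-1))$ over $t$). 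Applied with $X=N_R$ and $\mu=A(R)\lambda$, this gives the desired two-sided bound for a fixed $R$ with failure probability at most $2\exp(-c\varepsilon^2 A(R)\lambda)\le 2\exp(-c\varepsilon^4\lambda)$, since $A(R)\ge\varepsilon^2$ for $R\in\mathcal R(\varepsilon)$.

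Next I would set up the net. Let $D=\diam\Omega$. Choose a small parameter $\kappa=\kappa(\varepsilon)>0$ (say $\kappa=\varepsilon^3/(100D)$) and let $\mathcal G$ be the finite collection of rectangles contained in $\Omega$ whose corners lie on the grid $\kappa\mathbb{Z}^2$ and whose sides have length $\ge \varepsilon/2$; note $|\mathcal G|\le C(\Omega)\kappa^{-4}$, which is polynomial in $1/\varepsilon$. Apply \eqref{eq:poisson-chernoff} with the tighter tolerance $\varepsilon/2$ to every rectangle in $\mathcal G$ and take a union bound: with probability at least $1-2|\mathcal G|\exp(-c(\varepsilon/2)^2(\varepsilon/2)^2\lambda)$, every $R'\in\mathcal G$ satisfies $(1-\varepsilon/2)A(R')\lambda\le N_{R'}\le(1+\varepsilon/2)A(R')\lambda$. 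Since $|\mathcal G|$ grows only polynomially in $1/\varepsilon$ while the exponential decays in $\lambda$, this probability exceeds $1-\varepsilon$ once $\lambda\ge\lambda_0(\varepsilon,\Omega)$.

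Finally I would pass from the net to an arbitrary $R\subset\Omega$ with sides $\ge\varepsilon$. Given such an $R$, one can sandwich it between two axis-parallel-ish rectangles $R^-\subset R\subset R^+$ with corners on the $\kappa$-grid whose sides differ from those of $R$ by at most a few multiples of $\kappa$; hence $R^-,R^+\in\mathcal G$ and $A(R^\pm)=A(R)\bigl(1+O(\kappa/\varepsilon)\bigr)=A(R)(1+O(\varepsilon^2/D))$, which is within a factor $(1\pm\varepsilon/10)$ of $A(R)$. By monotonicity $N_{R^-}\le N_R\le N_{R^+}$, and combining the two net estimates with the area comparison gives $(1-\varepsilon)A(R)\lambda\le N_R\le(1+\varepsilon)A(R)\lambda$, as required. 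One subtlety is that a general rectangle $R$ is tilted, not axis-parallel, so instead of a single grid I would use finitely many rotated grids $e^{i\theta_k}\kappa\mathbb{Z}^2$ with $\theta_k$ ranging over a fine net of $[0,\pi/2)$, exactly as in the definition of $\mathcal S'(\varepsilon)$ in Section~\ref{sec:roughly-qc}; this only multiplies $|\mathcal G|$ by another polynomial factor in $1/\varepsilon$ and does not affect the argument. The main obstacle is precisely this uniformity over an uncountable family: the whole point is that the polynomial size of the net is dominated by the exponential Poisson concentration, so the union bound survives; everything else is routine.
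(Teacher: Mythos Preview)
Your argument is correct, but it differs from the paper's. The paper does not build a net of rectangles; instead it fixes a single fine grid $\delta\mathbb{Z}^2$ with $\delta=(1/20)\varepsilon^2$, controls $N_\square$ for each of the $O(\delta^{-2})$ cells $\square$ via the law of large numbers (or equivalently the same Chernoff bound), and then, for an arbitrary rectangle $R$, sandwiches $N_R$ between $N_{E_1}$ and $N_{E_2}$, where $E_1$ is the union of cells contained in $R$ and $E_2$ the union of cells meeting $R$. The point is that $A(E_2\setminus R)$ and $A(R\setminus E_1)$ are bounded by a constant times $\delta$ times the perimeter of $R$, hence by $(\varepsilon/4)A(R)$, regardless of the orientation of $R$.

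The trade-off: your approach needs the extra device of finitely many rotated grids to handle tilted rectangles, and a larger union bound over $O(\kappa^{-5})$ or so objects rather than $O(\delta^{-2})$ cells; the paper's cell-based method absorbs all orientations at once and is a bit more economical. On the other hand, your direct Chernoff-plus-net argument is entirely self-contained and makes the exponential-versus-polynomial competition explicit. One small point to tidy in your version: the outer rectangle $R^+$ need not lie in $\Omega$, so it may fall outside your net $\mathcal G$ as defined; simply enlarge $\mathcal G$ to all grid rectangles meeting $\Omega$ and apply the Chernoff bound to $N_{R^+\cap\Omega}$ with mean $A(R^+\cap\Omega)\lambda$, which still dominates $N_R$ and has area at most $A(R^+)$.
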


\begin{proof}
 Take $\delta = (1/20) \varepsilon^2$ and consider all cells in the square grid $\delta \mathbb{Z}^2$ which intersect $\Omega$.
The lemma follows from the following two observations:

(i) By the law of large numbers, when $\lambda$ is large, w.h.p.~
$$
(1-\varepsilon/4) \cdot A(\square)\lambda \, \le \, N_{\square} \, \le \, (1+\varepsilon/4) \cdot A(\square)\lambda$$
for any cell $\square \in \delta \mathbb{Z}^2$ that is completely contained in $\Omega$, while the upper bound holds for any $\square  \in \delta \mathbb{Z}^2$ that merely intersects $\Omega$.

(ii) Given a rectangle $R \subset \Omega$ whose sides have length at least $\varepsilon$, let
$E_1$ be the union of cells in $\delta \mathbb{Z}^2$ that are completely contained in $R$, and $E_2$ be the union of cells in $\delta \mathbb{Z}^2$ that have non-empty intersection with $R$. The areas of $R \setminus E_1$ and $E_2 \setminus R$ are bounded above by  $(\varepsilon/4) \cdot A(R)$.
\end{proof}

One can deduce Theorem \ref{main-thm4} for the original model where the number of Delauney points is fixed by using the following simple observation: for any $\varepsilon > 0$, when $N$ is large, w.h.p.~a collection of $N$ random points is squeezed between Poisson point processes with intensities $N/A(\Omega) - \varepsilon$ and
$N/A(\Omega) + \varepsilon$.

\subsection{Basic properties of Delauney triangulations}

Below, we will repeatedly use the following property of Delauney triangulations:

\begin{itemize}
\item[$(*)$] If $x$ belongs to a Delauney edge $v_1v_2$, then the closest point to $x$ is either $v_1$ or $v_2$.
\end{itemize}

The following lemma says that when the intensity is large, Delauney triangulations tend to have short edges and exhaust $\Omega$:

\begin{lemma}
\label{basic-properties}
 Suppose $ \Omega \subset \mathbb{C}$ is a Jordan domain with $C^1$ boundary. For any $\varepsilon > 0$ and compact set $K \subset \Omega$, when $\lambda \ge \lambda_0(\varepsilon, K, \Omega)$ is sufficiently large, with probability $1-\varepsilon$, we have:

{\em (i)} The length of any edge of $\mathcal T$ is less than $\varepsilon$,

 {\em (ii)}  $\carr \mathcal T \supset K$.

\end{lemma}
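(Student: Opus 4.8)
The plan is to isolate a single high-probability ``density'' event and then argue deterministically on it. Since proving the statement for a smaller $\varepsilon$ implies it for a larger one, I may assume $\varepsilon$ lies below a threshold depending only on $\Omega$ and $K$: small enough that $\varepsilon < \tfrac1{10}\dist(K,\partial\Omega)$, and small enough that the $C^1$-regularity of $\partial\Omega$ (together with compactness) furnishes an interior ball condition at scales $\le\varepsilon$, i.e.\ a constant $c_1=c_1(\Omega)>0$ with the property that $B(x,t)\cap\Omega$ contains a ball of radius $c_1 t$ for every $x\in\overline\Omega$ and every $0<t\le\varepsilon$. Put $d_0:=\dist(K,\partial\Omega)$ and $\varepsilon'':=\tfrac1{100}\min(c_1\varepsilon,\,d_0)$, and apply Lemma~\ref{points-in-a-square} with parameter $\varepsilon''$: when $\lambda$ is large, with probability $\ge 1-\varepsilon''\ge 1-\varepsilon$ every rectangle $R\subset\Omega$ with sides $\ge\varepsilon''$ contains at least one Delauney point. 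Call this event $\mathcal G$ (a.s.\ the points are in general position, so $\mathcal T$ is defined). The elementary observation used throughout is that a ball of radius $\ge\varepsilon''$ contains an axis-parallel square of side $\varepsilon''$, so on $\mathcal G$ \emph{any} ball of radius $\ge\varepsilon''$ lying in $\Omega$ contains a Delauney point. I claim both (i) and (ii) hold on $\mathcal G$.

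For (i), suppose some edge $v_1v_2$ of $\mathcal T$ has length $\ge\varepsilon$. By construction this edge lies inside a Delauney triangle contained in $\Omega$, so its midpoint $x$ lies in $\Omega$. By property $(*)$ the Delauney point nearest to $x$ is $v_1$ or $v_2$, hence at distance $\ge\varepsilon/2$ from $x$; thus the open disk $B(x,\varepsilon/2)$ contains no Delauney point. On the other hand the interior ball condition (with $t=\varepsilon/2$) places a ball of radius $c_1\varepsilon/2>\varepsilon''$ inside $B(x,\varepsilon/2)\cap\Omega$, and on $\mathcal G$ this ball contains a Delauney point --- a contradiction. This proves (i).

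For (ii), first note that on $\mathcal G$ the Delauney points are $O(\varepsilon'')$-dense throughout the $\tfrac12 d_0$-neighbourhood of any point of $K$ (every such point has an axis-parallel square of side $\varepsilon''$ lying in $\Omega$ within $2\varepsilon''$ of it, and that square carries a Delauney point). Surrounding $x\in K$ by finitely many Delauney points close to a small circle about $x$, an elementary convexity argument gives $x\in\operatorname{conv}(V)$, so $x$ lies in some triangle $\triangle$ of the full Delauney triangulation of $V$. It remains to show $\triangle\subset\Omega$, for which it suffices to bound $\diam\triangle$. Let $B(c,\rho)$ be the circumdisk of $\triangle$; by the defining empty-disk property of Delauney triangulations $B(c,\rho)$ contains no Delauney point, while $\diam\triangle\le 2\rho$ and $|x-c|\le\rho$ because $\triangle\subset\overline{B(c,\rho)}$. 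Suppose $\rho\ge d_0/4$. Then, placing a ball just on the $c$-side of $x$ of radius $\tfrac14\min(\rho/2,d_0/2)$ (which lies in $B(c,\rho)$ by the choice of centre, and in $B(x,d_0/2)\subset\Omega$), one obtains a ball of radius $\ge d_0/32\ge\varepsilon''$ contained in $B(c,\rho)\cap\Omega$; on $\mathcal G$ it contains a Delauney point, contradicting emptiness of $B(c,\rho)$. Hence $\rho<d_0/4$, so $\diam\triangle<d_0/2$ and therefore $\triangle\subset B(x,d_0/2)\subset\Omega$. Thus $\triangle$ is a triangle of $\mathcal T$ and $x\in\carr\mathcal T$; as $x\in K$ was arbitrary, $K\subset\carr\mathcal T$. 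This establishes (ii) on $\mathcal G$, which has probability $\ge 1-\varepsilon$ once $\lambda\ge\lambda_0(\varepsilon,K,\Omega)$.

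The main obstacle is the step common to both parts: turning a combinatorial/geometric ``defect'' --- a long Delauney edge, or a large Delauney triangle meeting $K$ --- into a genuinely two-dimensional empty region that, crucially, sits \emph{well inside} $\Omega$ rather than merely inside $\Omega$ near its boundary, so that Lemma~\ref{points-in-a-square} applies. For edges this is handled by property $(*)$ together with the $C^1$ interior ball condition; for triangles it is handled by the empty-circumcircle property, which conveniently supplies an empty disk already containing the point $x\in K$. Once the empty region is in place, the probabilistic content is entirely carried by Lemma~\ref{points-in-a-square}.
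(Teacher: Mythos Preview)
Your argument is correct. The overall strategy---pin down one high-probability density event and then argue geometrically---matches the paper, and your treatment of part~(i) is essentially the same as the paper's (midpoint of a long edge plus property~$(*)$), though you handle the boundary via an explicit interior-ball condition while the paper absorbs the boundary into a modified cell partition of~$\Omega$.

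Part~(ii) is where your route genuinely diverges. The paper argues through the Voronoi side: it shows that for $v$ well inside $\Omega$ the Voronoi cell $F_v$ has small diameter, hence the Delauney neighbours $v_1,\dots,v_d$ of $v$ are all close to $v$, so the star $\bigcup_i vv_iv_{i+1}$ is a small polygon around $v$ sitting inside $\Omega$. You instead work directly with the Delauney triangle $\triangle$ containing $x\in K$ and bound its circumradius via the empty-circumcircle property, placing a small ball inside $B(c,\rho)\cap B(x,d_0/2)\subset\Omega$ that must carry a point on $\mathcal G$. Both arguments are short; yours has the mild advantage that it uses only the defining Delauney property (empty circumdisk) and avoids any explicit Voronoi computation, while the paper's version gives slightly more---control on the full flower around each interior vertex, not just on the triangle containing a given point. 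Either way the probabilistic content is the same single density estimate.
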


\begin{proof}
For $\delta > 0$, let $\Omega^{\delta}$ be the union of all cells in $\delta \mathbb{Z}^2$ contained in $\Omega$. To define a partition of $\Omega$,  distribute the mass $\Omega \setminus \Omega^{\delta}$ amongst the boundary cells of $\Omega^{\delta}$.
Since $\partial \Omega$ is $C^1$, for small $\delta$, we can distribute the excess mass so that the diameter of every cell is at most $3\delta$ (the number 3 could be replaced by any other constant  greater than $\sqrt{5}$). In this case, the area of any cell would be comparable to $\delta^2$.
When the intensity $\lambda > 0$ is large, with probability $1-\varepsilon$, every
cell in the above partition contains at least one point of $\mathcal T$.

(i) Suppose $v_1 \sim v_2$ are neighbouring vertices of $\mathcal T$, one of which is at least $10\delta$ away from the boundary. If the Delauney edge $v_1v_2$ is longer than $3\sqrt 2 \delta$, then its midpoint $x$ would belong to a cell that contains a Delauney vertex which is not $v_1$ nor $v_2$. This would contradict property $(*)$ above. The same argument also works when $v_1$ and $v_2$ are close to the boundary, but one would need to use a larger constant, e.g.~$9 \delta$ works.

(ii) Suppose $v \in \mathcal T$ is a Delauney point such that $\dist(v, \partial \Omega) > 5 \delta$. We claim that the Vorononi cell $F_v \subset \overline{B(v, \sqrt{2}\delta)}$. To see this, note that if $y \in \partial B(v, \sqrt{2}\delta)$, then $y$ cannot lie in the same cell of the grid $\delta \mathbb{Z}^2$ as $v$. By assumption, there is a Delauney point $v' \ne v$ in the cell that contains $y$. Since cells of $\delta \mathbb{Z}^2$ have diameter $\sqrt{2}\delta$, $|v' - y| \le |v - y|$, which shows that $y$ is either outside $F_v$ or on the boundary of $F_v$. This proves the claim.
Let $v_1, v_2, v_3, \dots, v_d \in \mathcal T$  be the vertices connected to $v$ by an edge, listed in counter-clockwise order. Since the midpoint of the edge $vv_i$ lies on $F_v \cap F_{v_i}$, $|v - v_i| \le 2\sqrt{2}\delta$.
It follows that the polygon $v_1v_2 \dots v_d$ is contained in $\overline{B(v, 2\sqrt{2}\delta)}$.

Let us show that any point $z \in \Omega$ with  $\dist(z, \partial \Omega) > (5 + \sqrt 2) \delta$ lies in  $\carr \mathcal T$. Evidently,  $z$ lies in a Voronoi cell $F_v$ with $|v - z| \le \sqrt{2}\delta$. By the above discussion, $z$ belong to one of the Delauney triangles $v v_i v_{i+1}$.
\end{proof}

\subsection{Rough quasiconformality}

Let $R$ be a rectangle compactly contained in $\Omega$.
Its {\em exterior discrete approximation} $R^{\disc}_+$  consists of all vertices of $\mathcal T$ that either lie in $R$ or are adjacent to a vertex that lies in $R$. For each corner of $R$, mark the closest point in $\mathcal T \cap R^{\disc}_+$. (In case of a tie, we choose the marked points arbitrarily.) The four marked points turn $R^{\disc}_+$ into a discrete combinatorial rectangle. In practice, $R^{\disc}_+$ is close to $R$: if  $\tilde R$ is a slightly larger rectangle which contains $R$ in its interior, then for $\lambda$ is large, w.h.p.
$ R \, \subset \, R_{\disc}^+ \subset  \tilde R.$

\begin{lemma}
\label{random-delauney-roughly-qc}
For any $\varepsilon > 0$, when $\lambda \ge \lambda_0(\varepsilon, \Omega)$ is sufficiently large, the probability that  $\varphi_{\mathcal P}: \Omega \to \mathbb{D}$ is $(K, \varepsilon)$ roughly quasiconformal on $\Omega$ is  $\ge 1-\varepsilon$.
\end{lemma}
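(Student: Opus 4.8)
The plan is to establish, with probability at least $1-\varepsilon$, the two bounds
$$
\tfrac1K\,\Mod R\;\le\;\Mod\varphi_{\mathcal P}(R)\;\le\;K\,\Mod R,\qquad R\in\mathcal R_\Omega(\varepsilon),
$$
for a universal constant $K$. Interchanging the two distinguished pairs of sides of $R$ keeps $R$ in $\mathcal R_\Omega(\varepsilon)$ and replaces $\Mod R$ by $1/\Mod R$, so by the product formula \eqref{eq:product-of-moduli} the lower bound is just the upper bound for the conjugate rectangle; it therefore suffices to prove the upper bound. I would try to organize the argument so that Lemma \ref{moduli-CP} together with \eqref{eq:product-of-moduli} carry all the work --- applying Lemma \ref{moduli-CP} once to a combinatorial rectangle and once to its conjugate --- so that I only ever transfer a bound on the \emph{discrete} modulus of the exterior approximation $R^{\disc}_+$ to a bound on the continuous modulus of its image polygon $\varphi_{\mathcal P}(\carr R^{\disc}_+)$, and then pass from the combinatorial rectangles to the geometric ones.

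First I would fix the percolation. Choose $N$ so large that the probability a Poisson--Delauney vertex has degree $>N$ is below the threshold $r_0$ furnished, for a suitable separation $m$, by modification (iv) in the list following Lemma \ref{mathieu-lemma2} (by scale invariance this probability does not depend on $\lambda$ in the bulk, and the $C^1$ boundary keeps it small near $\partial\Omega$ as well); this $N$, hence the Ring Lemma constant $\mathfrak c(N)$, is universal. Fix a small mesh $\delta=\delta(\varepsilon,\Omega)$ and the cutoff $h=h(\lambda)=C_1\sqrt{(\log\lambda)/\lambda}$, and call a cell $\square$ of $\delta\mathbb Z^2$ \emph{blue} when (i) every Delauney vertex in $3\square$ has degree $\le N$, and (ii) every point of $3\square$ lies within $h$ of a Delauney vertex --- equivalently, $3\square$ meets no empty disk of radius $>h$, so all Delauney edges meeting $3\square$ are shorter than $2h$. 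Using the light tail of the Delauney degree and a union bound over a net of disk centres, as in the proof of Lemma \ref{basic-properties}, a cell is blue with probability $\ge1-r_0$ once $\lambda$ is large; since (off the big-hole event) Delauney adjacency in $3\square$ is decided by the Poisson points near $3\square$, the colours of $m$-separated cells are independent, as required by modification (iv). Two more events hold with probability $\ge1-\varepsilon$, by the argument of Lemma \ref{points-in-a-square}: for every rectangle $R'\subset\Omega$ with sides $\ge\varepsilon/2$ one has $N_{R'}\le C\,A(R')\lambda$ and $\sum_{e\subset R'}|e|^2\le C\,A(R')$ (long edges contribute negligibly because the number of edges of length $\ge t/\sqrt\lambda$ per unit area decays like $\lambda e^{-ct^2}$). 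Finally, Lemma \ref{mathieu-lemma2} with modification (iv) gives: on an event of probability $\ge1-\varepsilon$, every rectifiable curve joining the two distinguished sides of any $R\in\mathcal R_\Omega(\varepsilon)$ spends at least $\tfrac12\ell_2(R)$ of its length inside blue cells.

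The core of the proof is the discrete estimate. Given $R\in\mathcal R_\Omega(\varepsilon)$, on the events above $R\subset\carr R^{\disc}_+\subset\tilde R$ with $\tilde R\setminus R$ of width $\le 4h$, and I mark the vertices of $R^{\disc}_+$ nearest the corners of $R$. Writing $s(v)=\tfrac12\max_{w\sim v}|v-w|$ for the local scale at a vertex $v$, set
$$
\rho_{\disc}(v)\;=\;\frac{4}{\ell_2(R)}\,s(v)\,\chi_{\{\,v\in 3\square\ \text{for some blue}\ \square\,\}}.
$$
This is supported on vertices of degree $\le N$, as Lemma \ref{moduli-CP} requires. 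For admissibility, a discrete path $\gamma^{\disc}=\langle v_0,\dots,v_n\rangle$ joining the distinguished sides of $R^{\disc}_+$ realizes a polygonal curve joining the distinguished sides of a rectangle within $4h$ of $R$, so it spends length $\ge\tfrac12\ell_2(R)$ in blue cells; since every edge is shorter than $2h<\delta$, each segment $[v_i,v_{i+1}]$ that meets a blue cell forces $v_i\in 3\square$ for that cell (hence $v_i\in\supp\rho_{\disc}$) while $|v_i-v_{i+1}|\le 2s(v_i)$, and summing gives $\sum_{v\in\gamma^{\disc}\cap\supp\rho_{\disc}}s(v)\ge\tfrac14\ell_2(R)$, i.e.\ $\ell_{\rho_{\disc}}(\gamma^{\disc})\ge1$. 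For the area, $s(v)^2\le\tfrac14\sum_{w\sim v}|v-w|^2$, so $\sum_{v\in\tilde R}s(v)^2\le\tfrac12\sum_{e\cap\tilde R\ne\emptyset}|e|^2\le C'\,A(\tilde R)$, whence $A(\rho_{\disc})\le K_0\,\Mod R$. Now Lemma \ref{moduli-CP} (with $\eta=\mathfrak c(N)/2$) turns $\rho_{\disc}$ into a continuous metric of area $\le C(N)A(\rho_{\disc})$ admissible for the horizontal family of $\varphi_{\mathcal P}(\carr R^{\disc}_+)$, giving $\Mod\varphi_{\mathcal P}(\carr R^{\disc}_+)\le K_1\,\Mod R$; applying the same to the conjugate rectangle and using \eqref{eq:product-of-moduli} yields $\tfrac1{K_1}\Mod R\le\Mod\varphi_{\mathcal P}(\carr R^{\disc}_+)\le K_1\,\Mod R$ for all combinatorial rectangles. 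A bootstrap finishes the proof: knowing $\varphi_{\mathcal P}$ distorts the moduli of all (nearly rectangular) combinatorial rectangles of diameter $\gtrsim\varepsilon$ boundedly, the arguments of Section \ref{sec:roughly-qc} show that $\varphi_{\mathcal P}$ is H\"older at scales $\gtrsim\varepsilon$, so the thin collar $\tilde R\setminus R$ has image of controlled diameter and $\Mod\varphi_{\mathcal P}(R)$ differs from $\Mod\varphi_{\mathcal P}(\carr R^{\disc}_+)$ by a factor $1+o(1)$. Rectangles approaching $\partial\Omega$, where $\carr R^{\disc}_+$ may involve boundary horocycles, are handled using extended combinatorial rectangles and the variant of Lemma \ref{moduli-CP} noted in the remark following it.

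The step I expect to be the main obstacle is the admissibility of $\rho_{\disc}$. The Delauney graph has unbounded valence, and a Poisson--Delauney triangulation of intensity $\lambda$ genuinely contains empty disks of radius $\asymp\sqrt{(\log\lambda)/\lambda}$ --- a factor $\sqrt{\log\lambda}$ larger than the typical spacing --- so a naive constant discrete metric would be forced to have area $\asymp(\log\lambda)\,\Mod R\to\infty$. Getting a bound uniform in $\lambda$ is exactly what forces the three ingredients to be used together: weighting each vertex by its local scale $s(v)$ (so that $\sum_v s(v)^2$ is comparable to the area rather than to the number of vertices times $h^2$), the percolation estimate confining every crossing to low-degree, hole-free cells for a definite fraction of its length, and the ``sum of squared edge lengths $\lesssim$ area'' bound. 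A secondary technical point is verifying the weak independence of the blue/yellow colouring --- equivalently, that the degree of a Delauney vertex is determined by the nearby Poisson points --- which one can only assert off the big-hole event.
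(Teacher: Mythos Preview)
Your percolation setup has a genuine gap. You fix the mesh $\delta=\delta(\varepsilon,\Omega)$ independently of $\lambda$ and fix the degree threshold $N$, then declare a cell $\square$ blue when every Delauney vertex in $3\square$ has degree at most $N$. But a cell of fixed size contains on the order of $\lambda\delta^2$ Poisson points, and by the very scale invariance you invoke, each of these has degree exceeding $N$ with a fixed probability $p(N)>0$. Hence the probability that \emph{all} vertices in $3\square$ have degree $\le N$ behaves like $(1-p(N))^{9\delta^2\lambda}\to 0$ as $\lambda\to\infty$: for large intensity almost every cell is yellow, and the chemical-distance estimate from Lemma~\ref{mathieu-lemma2} cannot be invoked. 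Choosing $N$ so that a \emph{single} vertex has high degree with probability below $r_0$ is not the same as making a macroscopic cell blue with probability $\ge 1-r_0$.

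The paper repairs this by letting the mesh shrink with the intensity: take $\delta=C/\sqrt{\lambda}$, so a cell contains an expected $C^2$ points, and colour a cell \emph{blue} when it contains between $1$ and $C^3$ points. This colouring depends only on the Poisson process inside the cell, so cell colours are genuinely independent (no appeal to modification (iv) is needed), and a deep blue cell --- one whose $5\times5$ block is entirely blue --- automatically has all its vertices of valence $\le 25C^3$, since any Delauney edge out of such a vertex stays in $5\square$ by property~$(*)$. With this choice the \emph{constant} discrete metric $\rho_{\disc}=\chi_{\mathscr B\cap R^{\disc}_+}$ already works: its area is $N_{R^{\disc}_+}\lesssim \lambda\,\ell_1(R)\ell_2(R)$, while any crossing curve visits $\gtrsim \ell_2(R)/\delta\asymp \ell_2(R)\sqrt{\lambda}$ deep blue cells, and each such visit forces a vertex of $\gamma_{\disc}$ nearby, giving $\rho_{\disc}$-length $\gtrsim \ell_2(R)\sqrt{\lambda}$. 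The ratio is $\lesssim \Mod R$ with no $\log\lambda$ loss. Your worry that a constant metric must have area $\asymp(\log\lambda)\Mod R$ stems from bounding the length of $\gamma_{\disc}$ via the \emph{maximal} edge length $h\asymp\sqrt{(\log\lambda)/\lambda}$; the paper instead counts the number of cells at the typical scale $1/\sqrt{\lambda}$ that the curve crosses, which is insensitive to the rare long edges. The local-scale weight $s(v)$ and the $\sum|e|^2\lesssim A$ estimate are therefore unneeded, and the independence difficulty you flag at the end disappears because the paper's colouring is a function of the points in a single cell.
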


\begin{proof}

Let $R \in \mathcal R_\Omega(\varepsilon)$ be a rectangle compactly contained in $\Omega$ whose sides have length at least $\varepsilon$.
We will estimate the discrete modulus of $\Gamma_{\leftrightarrow}(R^{\disc}_+)$ from above using a discrete metric that is supported on vertices of bounded valence.
By Lemma \ref{moduli-CP}, this would give an upper bound for the continuous modulus of $\Gamma_{\leftrightarrow}(\varphi_{\mathcal P}(R))$, which is what we are after.

Consider the square grid $\delta \mathbb{Z}^2$ with mesh size $\delta = C/\sqrt{\lambda}$.
From the law of large numbers, we expect a cell $\square$ in  $\delta \mathbb{Z}^2$ to contain roughly $C^2$ points from $\mathcal{T}$.
We colour a cell $\square$ in $\delta \mathbb{Z}^2$ {\em blue} if it contains between 1 and $C^3$ points from $\mathcal{T}$ and {\em yellow} otherwise.
We call $\square$ {\em deep blue} if all cells $\hat \square \subset 5 \square$ are blue.
It is easy to see that any vertex of $\mathcal{T}$ in a deep blue cell has valence at most $25 C^3$ since the Delauney edges emanating it from it are contained in $5 \square$.
 By making $C > 0$ large, we can ensure that the probability that a cell is blue is at least $1 - r_0(5)$ where $r_0(5)$ is the constant from modification (iii) of Lemma \ref{mathieu-lemma2}.
Consider the discrete metric $\rho_{\disc} = \chi_{\mathscr B \cap R^{\disc}_+}$
where $\mathscr B$ is the union of the deep blue cells.
By Lemma \ref{points-in-a-square}, if the intensity $\lambda$ is large, then w.h.p.~
$$
A(\rho_{\disc}) \, = \, \sum_v \rho_{\disc}^2(v) \, = \, N_{R^{\disc}_+} \, \le \, 2 \cdot \ell_1(R)\ell_2(R) \lambda \, = \, 2C^2 \cdot \ell_1(R) \ell_2(R)/\delta^2.
$$
To estimate the $\rho_{\disc}$-length of a path $\gamma_{\disc} \in \Gamma_{\leftrightarrow}(R^{\disc}_+)$, we view it as a piecewise linear curve $\gamma$ by connecting the vertices with line segments. According to modification (iii) of Lemma \ref{mathieu-lemma2}, when $\lambda$ is sufficiently large, w.h.p.~every $\gamma$ passes through at least $\lfloor \ell_2(R)/(2\delta) \rfloor$ deep blue cells. By property $(*)$, if $\gamma$ passes through a blue cell, $\gamma_{\disc}$ must contain a vertex in this cell or in one of the eight adjacent cells.
 We see that w.h.p.~the $\rho_{\disc}$-length of every path in $\Gamma_{\leftrightarrow}(R^{\disc}_+)$ is at least $c \cdot \ell_2(R)/\delta$ where $c > 0$ is a definite constant.

The above computations show that when the intensity is large, w.h.p.~the discrete modulus of $\Gamma_{\leftrightarrow}(R^{\disc}_+)$ is bounded above by a definite constant times the continuous modulus of $R$.
Since $\rho_{\disc}$ was supported on vertices of bounded valence, Lemma \ref{moduli-CP} tells us that the continuous modulus of $\Gamma_{\leftrightarrow}(\varphi_{\mathcal P}(R))$ is
bounded by a definite multiple of the continuous modulus of $R$.  This completes the proof.
\end{proof}

\subsection{Interior conformality}

The following lemma says that the radii of interior circles are small when the intensity $\lambda$ is large:

\begin{lemma}
 For any $\varepsilon > 0$ and subdomain $\Omega'$ compactly contained in $\Omega$, if the intensity $\lambda > \lambda_0(\varepsilon, \Omega')$ is sufficiently large, then with probability at least $1-\varepsilon$,
 the radii of all circles $C_v \in \mathcal P$ associated to Delauney points $v \in \Omega'$ are less than $\varepsilon$.
\end{lemma}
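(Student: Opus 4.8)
The plan is to control $r_v$ through the distortion of $\varphi_{\mathcal P}$ on a small disk around $v$. Two elementary facts drive the argument. First, if $vw$ is an edge of $\mathcal T$, then $C_v$ and $C_w$ are externally tangent, so $|\varphi_{\mathcal P}(v)-\varphi_{\mathcal P}(w)| = r_v + r_w \ge r_v$; hence $r_v \le \diam \varphi_{\mathcal P}\bigl(B(v,\eta_0)\bigr)$ whenever all edges at $v$ have length $< \eta_0$, and by Lemma \ref{basic-properties} the maximal edge length $\eta_0$ can be made as small as we like by taking $\lambda$ large. Second, since the circles of the maximal packing $\mathcal P$ have disjoint interiors and lie in $\mathbb D$, the carrier $\carr \mathcal P$ is contained in $\overline{\mathbb D}$, which gives the a priori bound $\diam \varphi_{\mathcal P}(E) \le 2$ for every $E \subset \carr \mathcal T$. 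So it suffices to show that, with high probability, $\diam \varphi_{\mathcal P}\bigl(B(v,\eta_0)\bigr)$ is small, uniformly over the Delauney points $v \in \Omega'$.

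For this I would iterate the doubling inequality (\ref{eq:doubling}). By Lemma \ref{random-delauney-roughly-qc}, with high probability $\varphi_{\mathcal P}$ is $(K,\varepsilon')$ roughly quasiconformal on $\Omega$, and then the proof of Lemma \ref{compactness-criterion} shows that $\varphi_{\mathcal P}$ satisfies (\ref{eq:doubling}) at every scale $r \ge 5\varepsilon'$ with $B(x,4r)\subset\Omega$, with a contraction factor $2^{-\alpha} < 1$ depending only on $K$. Iterating (\ref{eq:doubling}) from scale $\eta_0 = 5\varepsilon'$ up to a fixed scale comparable to $\dist(\Omega',\partial\Omega)$, and using $\diam \varphi_{\mathcal P}(\cdot)\le 2$ at the top scale, yields
\[
\diam \varphi_{\mathcal P}\bigl(B(v,\eta_0)\bigr) \;\le\; 2\cdot 2^{-\alpha m},
\qquad m = m(\eta_0) \asymp \log\bigl(\dist(\Omega',\partial\Omega)/\eta_0\bigr),
\]
uniformly over $v\in\Omega'$. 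Since $m(\eta_0)\to\infty$ as $\eta_0\to 0$ while $\alpha$ stays fixed, choosing $\eta_0$ small makes the right-hand side smaller than $\varepsilon$.

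Assembling the quantifiers: given $\varepsilon$ and $\Omega'$, read off $\alpha=\alpha(K)$ from (\ref{eq:doubling}), pick $\eta_0$ small enough that $2\cdot 2^{-\alpha m(\eta_0)} < \varepsilon$, and set $\varepsilon' := \eta_0/5$; then invoke Lemmas \ref{basic-properties} and \ref{random-delauney-roughly-qc} to choose $\lambda_0$ so that for $\lambda \ge \lambda_0$, with probability $\ge 1-\varepsilon$, simultaneously $\varphi_{\mathcal P}$ is $(K,\varepsilon')$ roughly quasiconformal on $\Omega$, every edge of $\mathcal T$ has length $<\eta_0$, and every Delauney point in $\Omega'$ is an interior vertex of $\mathcal T$ (so $C_v$ is a genuine circle and $v$ has at least one neighbour — this is already contained in the proof of Lemma \ref{basic-properties}). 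On this event the two estimates above give $r_v<\varepsilon$ for all such $v$. The only real obstacle is the interplay of scales: the doubling bound is available only down to scale $5\varepsilon'$, whereas ``short edges'' pushes edge lengths to $0$, so one must lock $\eta_0 = 5\varepsilon'$ — making the smallest usable scale exactly the edge scale — and verify that the number of doubling steps $m$ grows as $\eta_0\to 0$, so that shrinking $\eta_0$ helps rather than hurts. One further point worth noting: although the doubling estimate in Lemma \ref{compactness-criterion} is phrased for normalized maps, only the scale-invariant ratio form of (\ref{eq:doubling}) is used here, and the top-scale diameter bound is free from $\carr\mathcal P\subset\overline{\mathbb D}$, so no normalization of $\varphi_{\mathcal P}$ enters.
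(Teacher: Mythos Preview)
Your proof is correct and takes essentially the same approach as the paper's: both use rough quasiconformality (Lemma \ref{random-delauney-roughly-qc}), short edges (Lemma \ref{basic-properties}), and the a priori bound $\carr\mathcal P \subset \overline{\mathbb D}$ to force the image of a small neighborhood of $v$ to have small diameter, hence $r_v$ small. The paper phrases this via a single thick annulus $A(v,r,r')$ whose image has large modulus and separates $C_v$ from the rest of the packing, while you unpack the same mechanism by iterating the doubling estimate (\ref{eq:doubling}); these are equivalent presentations of the same idea.
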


\begin{proof}
We can surround a vertex $v \in \Omega'$ by  an annulus $A = A(v, r, r') \subset \Omega'$ of arbitrarily large modulus. When $\lambda$ is large, w.h.p.~all vertices adjacent to $v$ will
lie inside $B(v,r)$.
Rough quasiconformality tells us that that the annulus $\varphi_{\mathcal P}(A)$ will have large modulus. Since the image of $\varphi_{\mathcal P}(A)$ is contained in the unit disk and surrounds $C_v$, the radius of $C_v$ must be small.
\end{proof}

By a deep theorem of He and Schramm \cite[Theorem 1.1]{he-schramm}, we have:

\begin{corollary}
\label{largescale-structure}
Let $S$ be a square compactly contained in $\Omega$ and $\tilde S \subset \Omega$ be a slightly larger square with the same center as $S$. For any $\varepsilon > 0$, when the intensity $\lambda > \lambda_0(\varepsilon, S, \tilde S)$ is sufficiently large, with probability at least $1-\varepsilon$, the
modulus of $\varphi_{\mathcal P}(S)$ is determined by the Delauney triangulation on $\tilde S$ within $\varepsilon$ of its true value.
\end{corollary}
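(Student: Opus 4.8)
The plan is to produce, with high probability, a combinatorial quantity $M=M(\mathcal{T}|_{\tilde S})$ depending only on the restriction of the Delauney triangulation to $\tilde S$ for which $\bigl|\Mod\varphi_{\mathcal P}(S)-M\bigr|<\varepsilon$. First I would fix auxiliary squares $S\Subset S'\Subset\tilde S$ and a subdomain $\Omega'$ with $\tilde S\Subset\Omega'\Subset\Omega$. By Lemma \ref{basic-properties}, when $\lambda$ is large, with probability close to $1$ every edge of $\mathcal{T}$ is shorter than $\tfrac14\dist(\partial S',\partial\tilde S)$; consequently the exterior discrete approximation $(S')^{\disc}_+$, together with its four marked corner vertices, is a combinatorial rectangle squeezed between $S'$ and $\tilde S$, hence a function of $\mathcal{T}|_{\tilde S}$ alone. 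I would let $M$ be the discrete extremal length of $\Gamma_{\leftrightarrow}\bigl((S')^{\disc}_+\bigr)$ measured with the uniform vertex metric restricted to vertices of bounded valence, exactly as in Lemma \ref{moduli-CP}.

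Next I would collect three events, each of probability $\ge1-\varepsilon/4$ once $\lambda$ is large: (a) by the preceding lemma on smallness of interior radii applied to $\Omega'$, every circle $C_v$ with $v\in\Omega'$ has radius less than a threshold $\delta_1=\delta_1(\varepsilon)$ to be chosen at the end; (b) by Lemma \ref{random-delauney-roughly-qc}, $\varphi_{\mathcal P}$ is $(K,\varepsilon_1)$ roughly quasiconformal on $\Omega$; (c) reusing the percolation construction in the proof of Lemma \ref{random-delauney-roughly-qc} and modification (iii) of Lemma \ref{mathieu-lemma2}, the deep blue region contains a combinatorial annulus $A\subset\tilde S\setminus S$, of valence at most $25C^3$ and of definite discrete modulus, which separates $S$ from the complement of $\tilde S$ in the triangulation. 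Event (c) is what neutralizes the (rare) vertices of very large valence in a random Delauney triangulation: they all lie outside $A$, so the sub-packing reachable from $S$ without crossing $A$ has bounded geometry in addition to small circles.

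Finally I would invoke the He--Schramm theorem \cite[Theorem 1.1]{he-schramm} in the following form: for a maximal circle packing in which every circle meeting a bounded-valence combinatorial annulus of definite modulus around a region has radius $<\delta_1$, the modulus of the image of that region under the circle packing map agrees with the discrete extremal length of the enclosed combinatorial rectangle up to an error $\omega(\delta_1)$ with $\omega(\delta_1)\to0$ as $\delta_1\to0$ --- the large circles near $\partial\Omega$ and the sparse high-valence vertices outside $A$ affect $\varphi_{\mathcal P}$ only away from $S$ and do not enter the estimate. Applied with the region $S'$ and the annulus $A$, this gives $\bigl|\Mod\varphi_{\mathcal P}(S')-M\bigr|<\omega(\delta_1)$. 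Since $S\Subset S'$, the compactness of roughly quasiconformal maps (Lemma \ref{compactness-criterion}) together with the perturbation estimate for moduli (Lemma \ref{continuity-of-moduli}) makes $\bigl|\Mod\varphi_{\mathcal P}(S)-\Mod\varphi_{\mathcal P}(S')\bigr|$ as small as we wish by taking $S'$ close enough to $S$ at the outset. Choosing $S'$ so that this difference is $<\varepsilon/2$, then $\delta_1$ so that $\omega(\delta_1)<\varepsilon/2$, and finally $\lambda$ large enough that the short-edge event and (a)--(c) all hold with total probability $\ge1-\varepsilon$, we conclude $\bigl|\Mod\varphi_{\mathcal P}(S)-M\bigr|<\varepsilon$.

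The hardest part will be extracting from \cite{he-schramm} the precise \emph{additive} comparison used in the last paragraph --- Lemma \ref{moduli-CP} gives only a multiplicative bound --- and making sure it applies without a global valence hypothesis, which is the reason the bounded-valence buffer annulus $A$ is built into the argument. A secondary point is that the rough quasiconformality constant $\varepsilon_1$ of $\varphi_{\mathcal P}$ degrades with $\lambda$, so the perturbation step from $S$ to $S'$ must be carried out through Lemma \ref{compactness-criterion} rather than via a fixed distortion bound.
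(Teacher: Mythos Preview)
The paper treats this corollary as an immediate consequence of the preceding lemma (interior circles have small radii) and the He--Schramm rigidity theorem, with no further argument: once all circles over $\tilde S$ are small, He--Schramm says that any two circle packings of $\mathcal T|_{\tilde S}$ with small mesh differ only by an approximate M\"obius transformation, so $\Mod\varphi_{\mathcal P}(S)$ is determined (up to $\varepsilon$) by the combinatorics in $\tilde S$. In particular, the paper never identifies the determining quantity $M$ explicitly; it is implicitly ``the modulus of the image of $S$ under \emph{any} packing of $\mathcal T|_{\tilde S}$ with small circles,'' which is well-defined to within $\varepsilon$ precisely by He--Schramm.

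Your route is considerably more elaborate, and the step you yourself flag as hardest is in fact a genuine gap. You propose to take $M$ to be a specific \emph{discrete} extremal length and then invoke He--Schramm in the form ``continuous modulus of the image $\approx$ discrete extremal length of the combinatorial rectangle, additively.'' That statement is not what \cite[Theorem~1.1]{he-schramm} proves. He--Schramm is a rigidity/convergence result comparing two \emph{continuous} circle packings of the same combinatorics; it does not yield an additive comparison between a continuous modulus and a discrete vertex extremal length. The only bridge between the two in this paper is Lemma~\ref{moduli-CP}, which is multiplicative and one-sided --- good enough for rough quasiconformality, but not for the $\varepsilon$-precision claimed here. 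Consequently your argument, as written, does not close.

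The repair is to drop the discrete extremal length altogether and apply He--Schramm the way the paper does: take $M$ to be $\Mod\varphi_{\mathcal Q}(S)$ for some canonical packing $\mathcal Q$ of the finite triangulation $\mathcal T|_{\tilde S}$ (say its own maximal packing). Both $\mathcal P|_{\tilde S}$ and $\mathcal Q$ are packings of the same finite complex with uniformly small circles, so He--Schramm gives $\varphi_{\mathcal P}|_S\approx(\text{M\"obius})\circ\varphi_{\mathcal Q}|_S$, hence $\Mod\varphi_{\mathcal P}(S)\approx M$. With this choice of $M$, your bounded-valence buffer annulus and the $S\to S'$ perturbation step become unnecessary: He--Schramm's hypothesis is smallness of circle radii, not bounded valence, and that is exactly what the preceding lemma supplies.
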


Since the model of random Delauney triangulations does not have a preferred direction, the arguments of Section \ref{sec:homogenization-qc}  show:

\begin{lemma}
Let $\Omega'$ be a subdomain compactly contained in $\Omega$ which contains $z_1, z_2$.
For any $\varepsilon > 0$, when the intensity $\lambda > \lambda_0(\varepsilon, \Omega', \Omega)$ is sufficiently large, with probability at least $1-\varepsilon$, the map $\varphi_{\mathcal P}$ is within $\varepsilon$ of a conformal map defined on $\Omega'$.
\end{lemma}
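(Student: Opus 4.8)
The plan is to transplant the homogenization argument of Section~\ref{sec:homogenization-qc} almost verbatim, with the intensity $\lambda \to \infty$ playing the role of the mesh size $\delta \to 0$ and the circle packing map $\varphi_{\mathcal P}$ playing the role of the random quasiconformal map $w^\mu$. Three ingredients are needed and I would assemble them first: rough quasiconformality of $\varphi_{\mathcal P}$ with high probability (Lemma~\ref{random-delauney-roughly-qc}); the near-locality of the modulus, namely that once all interior circles are uniformly small, $\Mod \varphi_{\mathcal P}(S)$ for a square $S$ compactly contained in $\Omega'$ is determined, up to an error that can be made as small as one likes, by the Delauney triangulation on a slightly larger square (Corollary~\ref{largescale-structure}, via He--Schramm, together with the preceding lemma on the smallness of interior circles); and the isotropy of the model. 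Granting these, one defines the maximal effective distortion $K^*(S)$ exactly as in Section~\ref{sec:homogenization-qc} --- the threshold of $K$ at which $\limsup_{\lambda\to\infty}\mathbb P\bigl(\Mod \varphi_{\mathcal P}(S) > K\bigr) = 0$ --- and then runs the analogues of Lemmas~\ref{extremal-direction1}--\ref{extremal-direction} and of the proof of Theorem~\ref{main-thm2}.

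The isotropy is what replaces the identification of an extremal direction in Section~\ref{sec:homogenization-qc}. Because the Poisson point process on a round disk is rotation invariant and the maximal circle packing of a finite triangulation depends only on its combinatorics, rotating the point configuration replaces $\varphi_{\mathcal P}$ by $\varphi_{\mathcal P}$ precomposed with a rotation, up to a change of the normalization points --- and by (the obvious analogue of) part~(i) of the $K^*$-lemma of Section~\ref{sec:homogenization-qc}, $K^*(S)$ depends neither on that normalization nor on $\Omega$ nor on the location and orientation of $S$. In particular $K^*(S) = K^*(S_{\pi/2})$, where $S_{\pi/2}$ is $S$ turned a quarter-turn about its center. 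But for any quasiconformal map $\varphi$ one has $\Mod\varphi(S)\cdot\Mod\varphi(S_{\pi/2}) = 1$ by (\ref{eq:product-of-moduli}), so the analogue of Lemma~\ref{extremal-direction1}, applied to both $S$ and $S_{\pi/2}$, would force --- with high probability along a sequence of good scales --- both $\Mod\varphi_{\mathcal P}(S) > K^* - \varepsilon$ and $\Mod\varphi_{\mathcal P}(S_{\pi/2}) > K^* - \varepsilon$, which is impossible unless $K^* = 1$. Hence the affine approximant is the identity, and the limiting map is conformal.

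It then remains to upgrade this to a high-probability statement at all scales, following the proof of Lemma~\ref{extremal-direction}. Fix a small $\eta > 0$ and run an auxiliary percolation on a grid $\beta\mathbb Z^2$ adapted to $\Omega'$ with $\beta \asymp \eta/\sqrt{\lambda}$. Colour a cell $\square$ \emph{good} if every Delauney vertex meeting a fixed neighbourhood of $\square$ has valence bounded by a universal constant --- which, by the percolation argument in the proof of Lemma~\ref{random-delauney-roughly-qc}, happens with probability close to $1$, and which lets Lemma~\ref{moduli-CP} operate near $\square$ --- and if, moreover, $\varphi_{\mathcal P}$ restricted to $\eta^{-1}\square$ is within $\eta^2 \diam\varphi_{\mathcal P}(\eta^{-1}\square)$ of a conformal map; by Corollary~\ref{largescale-structure} and the single-square form of the previous paragraph, a given cell is good with probability close to $1$ once $\lambda$ is large and $\eta$ small. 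On good cells Koebe's distortion theorem makes $\varphi_{\mathcal P}$ $C\eta$-close to linear. Crucially, whether a cell is good depends only on the Delauney triangulation in a bounded neighbourhood of $\square$, so modification~(iv) of Lemma~\ref{mathieu-lemma2} applies and Lemma~\ref{blue-conformality2} yields $1-\varepsilon < \Mod\varphi_{\mathcal P}(S) < 1+\varepsilon$ for all $S \in \mathcal S_{\Omega'}(\varepsilon)$ with probability at least $1-\varepsilon$. If the lemma were false, there would be intensities $\lambda_n \to \infty$ for which $\varphi_{\mathcal P}$ is roughly quasiconformal and satisfies these bounds with $\varepsilon = 1/n$ yet stays a definite distance from every conformal map on $\Omega'$; but by Lemma~\ref{compactness-criterion} and Lemma~\ref{continuity-of-moduli} every subsequential limit distorts the modulus of every square by exactly $1$, hence is conformal, forcing $\varphi_{\mathcal P}$ to be eventually close to it --- a contradiction.

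The step I expect to be the main obstacle is the near-locality of the modulus. In the quasiconformal model, $\Mod w^\mu(S)$ depends only on $\mu|_S$, so the moduli of images of disjoint squares are genuinely independent and ``good'' is genuinely a local event; here $\Mod\varphi_{\mathcal P}(S)$ depends on the whole packing. Controlling this requires first establishing, with high probability, that all interior circles are small, then invoking the He--Schramm theorem to confine the influence of the exterior to a small error, and finally checking both that this error is uniform over the finitely many squares in play and negligible next to $\varepsilon$, and that after this truncation the residual dependence between well-separated cells is weak enough to feed into modification~(iv) of Lemma~\ref{mathieu-lemma2}. Everything else is a faithful transcription of Section~\ref{sec:homogenization-qc}.
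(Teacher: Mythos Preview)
Your proposal is correct and takes essentially the same approach as the paper, which simply asserts that the arguments of Section~\ref{sec:homogenization-qc} carry over once one notes that the Poisson model has no preferred direction; you have fleshed out exactly this, including the use of isotropy together with $\Mod\varphi_{\mathcal P}(S)\cdot\Mod\varphi_{\mathcal P}(S_{\pi/2})=1$ to force $K^*=1$, and the replacement of genuine locality by the He--Schramm near-locality of Corollary~\ref{largescale-structure}. Your identification of the latter as the chief obstacle is also accurate.
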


\subsection{Boundary behaviour}

To complete the proof of Theorem \ref{main-thm4}, we need to show that if $\partial \Omega$ is $C^1$, then the image of the approximating conformal map is the unit disk:

\begin{lemma}
For any $\varepsilon > 0$, there exists an $r > 0$ so that when the intensity $\lambda > \lambda_0(\varepsilon, r)$ is sufficiently large, with probability at least $1-\varepsilon$, the image of $\Omega_r = \{z \in \Omega: \dist(z, \partial \Omega) < r\}$ under $\varphi_{\mathcal P}$ contains $B(0, 1-\varepsilon)$.
\end{lemma}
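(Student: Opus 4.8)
The plan is to reduce the lemma to the single assertion that, with high probability when $\lambda$ is large, \emph{every boundary circle of $\mathcal P$ has radius less than $\delta$} (for any prescribed $\delta>0$), and then to extract this smallness from a modulus estimate at the $C^1$ boundary. The reduction is elementary: two consecutive boundary circles are tangent, so the edges of $\partial(\carr\mathcal P)$ have length $<2\delta$; each boundary circle is a horocycle, so its centre lies within $\delta$ of $\partial\mathbb D$; hence $\partial(\carr\mathcal P)\subset\{\,1-3\delta<|z|\le 1\,\}$. Since $\carr\mathcal P=\varphi_{\mathcal P}(\carr\mathcal T)$ is a closed topological disk containing the origin (the circle $C_{v_1}$ is centred at $0$), this forces $\carr\mathcal P\supseteq\overline{B(0,1-3\delta)}$, i.e.\ $\carr\mathcal P\supseteq B(0,1-\varepsilon)$ once $\delta=\varepsilon/3$. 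Feeding $\delta\downarrow 0$ into the interior conformality lemma then shows the limiting normalized conformal map is onto $\mathbb D$, and a routine bookkeeping of how $\carr\mathcal T$ decomposes into its part in $\Omega_r$ and a compact ``bulk'' on which $\varphi_{\mathcal P}$ is close to that conformal map (using also Lemma~\ref{basic-properties}) upgrades $\carr\mathcal P\supseteq B(0,1-\varepsilon)$ to the stated inclusion for a suitable $r=r(\varepsilon)$.

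For the smallness of boundary circles, the mechanism I would use is that a circle $C_v$ whose vertex $v$ is close to a boundary point $p\in\partial\Omega$ is trapped inside the $\varphi_{\mathcal P}$-image of an arbitrarily small half-disk $B(p,\rho)\cap\Omega$, together with the fact that this image has vanishing diameter. Fix a scale $s_0>0$ small enough that the $C^1$ curve $\partial\Omega$ is essentially flat on $B(p,2s_0)$ for every $p$ (possible by compactness of $\partial\Omega$), so that the half-annuli $\{z\in\Omega:a<|z-p|<b\}$ with $[a,b]\subset(0,s_0]$ are quantitatively comparable to genuine half-annuli and carry definite modulus. Stacking the $\asymp\log(s_0/\rho)$ dyadic half-annuli between scales $\rho$ and $s_0$ yields, inside $\Omega$, an annulus of modulus $\gtrsim\log(s_0/\rho)$ separating $B(p,\rho)\cap\Omega$ from $\Omega\setminus B(p,s_0)$. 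By Lemma~\ref{random-delauney-roughly-qc} (rough quasiconformality, whose roughness scale we may take $\ll s_0$ by enlarging $\lambda$), the $\varphi_{\mathcal P}$-image of this annulus still has modulus $\gtrsim\log(s_0/\rho)$, and it separates the image of the small half-disk --- a connected set containing $C_v$, hence touching $\partial\mathbb D$ at the tangency point of $C_v$ --- from the image of the far part of $\Omega$, which abuts the far part of $\partial\mathbb D$. Teichm\"uller's modulus estimate says that a continuum in $\overline{\mathbb D}$ which touches $\partial\mathbb D$ and is separated from a definite portion of $\partial\mathbb D$ by an annulus of modulus $M$ has diameter $\lesssim e^{-cM}$; with $M\gtrsim\log(s_0/\rho)$ and $\rho\downarrow 0$ as $\lambda\to\infty$ this diameter tends to $0$, so $\operatorname{rad}(C_v)\le\tfrac12\operatorname{diam}\varphi_{\mathcal P}(B(p,\rho)\cap\Omega)\to 0$, uniformly in $p$ (the events of Lemmas~\ref{basic-properties} and~\ref{random-delauney-roughly-qc} are global and hold simultaneously w.h.p.).

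The hard part will be making this last modulus argument honest near $\partial\Omega$. The issue is that $\varphi_{\mathcal P}$ does \emph{not} a priori map $\partial(\carr\mathcal T)$ onto $\partial\mathbb D$: between tangency points $\partial(\carr\mathcal P)$ dips into the disk, so $\varphi_{\mathcal P}$ of a half-annulus is really a topological \emph{quadrilateral}, and one must rule out curves that connect the image of the small half-disk to the image of the far part of $\Omega$ by escaping through $\mathbb D\setminus\carr\mathcal P$ (or through $\widehat{\mathbb C}\setminus\overline{\mathbb D}$) instead of crossing the quadrilateral the long way --- and the size of $\mathbb D\setminus\carr\mathcal P$ is precisely what we are trying to control, so one cannot argue circularly. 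I would resolve this by doubling the whole picture across $\partial\Omega$ (respectively across the polygon $\partial(\carr\mathcal T)$): the $C^1$ regularity of $\partial\Omega$ makes the doubled domain and the doubled piecewise-linear map well behaved at the scales $\gg 1/\sqrt\lambda$ that enter the estimate, the doubled half-annuli become genuine annuli on a sphere, and Teichm\"uller's estimate then applies directly to the doubled, enclosed continuum. There is also a small amount of combinatorial bookkeeping --- using property $(*)$ of Delauney triangulations and the short-edge conclusion of Lemma~\ref{basic-properties} to check that $v$ and its extended (horocyclic) vertex genuinely lie in $\varphi_{\mathcal P}(B(p,\rho)\cap\Omega)$ when $p$ is the boundary point closest to $v$ and $\rho$ is a few times the maximal edge length --- but this is routine. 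Everything else (the reduction of the first paragraph and the final assembly from $\carr\mathcal P\supseteq B(0,1-\varepsilon)$) is soft.
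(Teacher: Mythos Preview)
Your overall strategy---show that the $\varphi_{\mathcal P}$-image of every small half-disk $B(\zeta,\rho)\cap\Omega$ with $\zeta\in\partial\Omega$ has small diameter, via a large-modulus estimate for the surrounding half-annulus---is exactly the paper's strategy, and your first-paragraph reduction through ``small boundary circles'' is a harmless detour around it. The substantive difference is in how the modulus estimate near $\partial\Omega$ is obtained.

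The paper does \emph{not} invoke Lemma~\ref{random-delauney-roughly-qc} here (which, as you note, is proved only for rectangles compactly contained in $\Omega$). Instead it reruns that lemma's proof scheme directly on the half-annulus $R=A(\zeta,2r,r')\cap\Omega$: put the discrete logarithmic metric
\[
\rho_{\disc}(v)=\frac{1}{|v-\zeta|}\,\chi_{\mathscr B\cap R^{\disc}_+},
\]
bound its discrete area by $\lesssim\delta^{-2}\log(r'/r)$ and the $\rho_{\disc}$-length of every discrete radial crossing by $\gtrsim\delta^{-1}\log(r'/r)$ via the percolation lemma, and then transfer to the continuous side using Lemma~\ref{moduli-CP}. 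The issue you correctly worry about---curves in $\mathbb D$ escaping through $\mathbb D\setminus\carr\mathcal P$ rather than crossing $\varphi_{\mathcal P}(R)$ the long way---is handled by the \emph{extended combinatorial rectangle} device set up in the Remark following Lemma~\ref{moduli-CP}: one adjoins the horocycle tangency points $v_i^*\in\partial\mathbb D$ and the edges $v_iv_i^*$, $v_i^*v_j^*$, so that the ``flat'' sides of $\varphi_{\mathcal P}(R^{\disc}_+)$ land on $\partial\mathbb D$ and the image genuinely separates $\varphi_{\mathcal P}\bigl(B(\zeta,2r)\cap\Omega\bigr)$ from $C_{v_1},C_{v_2}$ inside $\overline{\mathbb D}$. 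That remark is precisely the tool replacing your doubling.

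Your proposed fix---double across $\partial\Omega$ and across $\partial(\carr\mathcal T)$---is where the gap lies. To make it work you would need the doubled piecewise-linear map to be roughly quasiconformal on rectangles straddling the seam, and that is essentially the boundary modulus control you are trying to establish; the $C^1$ regularity of $\partial\Omega$ alone does not tell you how the reflected circle packing distorts such rectangles, so the argument is circular as written. Rather than develop the doubling, the shorter route is the paper's: bypass Lemma~\ref{random-delauney-roughly-qc} altogether, estimate the discrete modulus of the half-annulus directly with the weight $|v-\zeta|^{-1}$ on deep-blue vertices, and invoke the extended version of Lemma~\ref{moduli-CP} to pass to the continuous modulus in $\mathbb D$.
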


 In particular, the above lemma implies that all circles in $\mathcal P$
have small radii, not just ones confined to the interior.

\begin{proof}[Sketch of proof]
Since $\partial \Omega$ is $C^1$, there exists a number $\rho_0 > 0$ so that for any boundary point $\zeta \in \partial \Omega$ and $0 < \rho < \rho_0$, the intersection
$\partial B(\zeta, \rho) \cap \Omega$ consists of a single circular arc.

Fix $0 < r' < \rho_0$ so that $\dist(z_1,\partial \Omega) > 2r'$ and
$\dist(z_2,\partial \Omega) > 2r'$.
For a point $\zeta \in \partial \Omega$, consider the conformal rectangle $R = A(\zeta, 2r,r') \cap \Omega$ where the ``flat'' sides have been marked. Since the modulus of $R$ can be made arbitrarily large by making $r$ small, it is reasonable to believe that w.h.p.~its image $\varphi_{\mathcal P}(R)$ also has large modulus. Assuming this temporarily, we see that the diameter of $\varphi_{\mathcal P} \bigl (B( \zeta,2r) \bigr )$ is  small   since  $\varphi_{\mathcal P}(R)$ separates $\varphi_{\mathcal P} \bigl (B(\zeta, 2r) \bigr )$ from $C_{v_1}$ and $C_{v_2}$. The lemma follows since finitely many balls $B(\zeta_i, 2r)$ cover $\Omega \setminus \Omega_r$.

To estimate $\Mod \varphi_{\mathcal P}(R)$, we follow the strategy from the proof of rough quasiconformality (Lemma  \ref{random-delauney-roughly-qc}).
Set $\delta = C/\sqrt{\lambda}$ as in Lemma \ref{random-delauney-roughly-qc}.
Since $\partial \Omega$ is $C^1$, we may partition $\Omega$ into cells of diameter comparable to $\delta$ and area comparable to $\delta^2$ as in Lemma \ref{basic-properties}.
We colour each cell in $\Omega$ either blue and yellow as in Lemma \ref{random-delauney-roughly-qc}, that is, we colour a cell {\em blue} if it contains between 1 and $C^3$ points of $\mathcal T$ and {\em yellow} otherwise.
Consider the metric
$$
\rho_{\disc}(v) = \frac{1}{|v-\zeta|} \cdot \chi_{ \mathscr B \cap R^{\disc}_+},
$$
where $\mathscr B$ is the union of the deep blue cells and $R^{\disc}_+$ is the exterior discrete approximation of $R$.
We claim that w.h.p.~
$$
A(\rho_{\disc}) \lesssim \log \frac{r'}{2r} \cdot (1/\delta)^2,
\qquad
\ell_{\rho_{\disc}}(\gamma) \gtrsim \log \frac{r'}{2r} \cdot (1/\delta), \qquad \gamma \in \Gamma_{\updownarrow}(R^{\disc}_+).
$$
The area estimate follows from the law of large numbers, while the length estimate follows from modification (ii) of Lemma \ref{mathieu-lemma}. These length-area estimates imply that $\Mod \Gamma_{\updownarrow}(R^{\disc}_+) \lesssim
\bigl ( \log \frac{r'}{r} \bigr)^{-1}$. Since $\rho_{\disc}$ is supported on vertices of bounded valence, by Lemma \ref{moduli-CP},
$\Mod \Gamma_{\updownarrow}(\varphi_{\mathcal P}(R)) \lesssim
\bigl ( \log \frac{r'}{r} \bigr)^{-1}$. Hence,
$\Mod\varphi_{\mathcal P}(R) = \Mod \Gamma_{\leftrightarrow}(\varphi_{\mathcal P}(R)) \gtrsim
 \log \frac{r'}{r}$ as desired.
\end{proof}

\appendix

\section[Appendix A. Weak convergence is not enough]{Weak convergence is not enough}

It sounds plausible that if a sequence of Beltrami coefficients $\mu_n$ converges weakly to $\mu$, then the quasiconformal mappings $w^{\mu_n}$ converge pointwise to $w^{\mu}$.
However, this is not true. For a counterexample, partition the plane into vertical strips of width $\delta$  and assign $\mu^\delta =1/3$ on odd-numbered strips and $-1/3$ on even-numbered strips. Clearly, the Beltrami coefficients $\mu_{\delta}$ converge weakly to 0 as $\delta \to 0$, however, the maps $\mu_\delta$ converge to the affine stretch in the horizontal direction
 by the factor $(2+\frac{1}{2})/2 = 5/4$. Indeed, on each even-numbered strip, the $x$-coordinate is stretched by a factor of 2, while in each odd numbered strip, one contracts the $x$-coordinate by a factor of 2.

By the law of large numbers, if one randomly assigns the Beltrami coefficient to be $\pm 1/3$ on vertical strips, then the limit is also an affine stretch by a factor of $5/4$ in the $x$-coordinate.


\begin{thebibliography}{ll}

\bibitem{AIM} K.~Astala, T.~Iwaniec, G.~J.~Martin, {\em Elliptic partial differential equations and quasiconformal mappings in the plane}\/, Princeton University Press, 2009.

\bibitem{ARST} K.~Astala, S.~Rohde, E.~Saksman, T.~Tao, {\em Random quasiconformal maps and homogenization
of iterated singular integrals}\/, preprint.

\bibitem{biskup-survey} M.~Biskup, {\em Recent progress on the Random Conductance Model}\/, Probab. Surveys 8 (2011), 294--373.

\bibitem{BP} M.~Biskup, T.~Prescott, {\em  Functional CLT for random walk among bounded random conductances}\/, Electron. J. Probab. 12 (2007), paper no. 49, 1323--1348.

\bibitem{he-schramm} Z-X.~He, O.~Schramm, {\em On the convergence of circle packings to the Riemann map}\/, Invent. Math. 125 (1996), No. 2, 285–-305.

\bibitem{hinkkanen}A.~Hinkkanen, {\em Rectangles and Quasiconformal Mappings}\/,
Math Z. 183 (1983), 539--545.

\bibitem{koebe} P.~Koebe, {\em Kontaktprobleme der konformen Abbildung}\/, Hirzel, 1936.

\bibitem{mathieu} P.~Mathieu, {\em Quenched invariance principles for random walks with random conductances}\/, J. Stat. Phys. 130 (2008), No. 5, 1025--1046.

\bibitem{osada}H.~Osada, {\em Homogenization of diffusion processes with random stationary coefficients}\/, In: Probability Theory and Mathematical Statistics, Tbilissi, 1982. Lecture Notes in Math. 1021, Springer, Berlin, 1983, pp. 507--517.

\bibitem{sidoravicius-sznitman}V.~Sidoravicius, A-S.~Sznitman, {\em Quenched invariance principles for walks on clusters of percolation or among random conductances}\/, Probab. Theory Relat. Fields 129 (2004), Nov. 2, 219--244.

\bibitem{stephenson-book} K.~Stephenson, {\em Introduction to Circle Packing: The Theory of Discrete Analytic Functions}\/, Cambridge University Press, New York, 2005.

\bibitem{thurston} W.~P.~Thurston, {\em The geometry and topology of 3-manifolds}\/, Princeton lecture notes, 1978--1981.

\end{thebibliography}
\end{document}